\newcommand{\Tensor}{\otimes}
\newcommand{\Par}{\mathop{\bindnasrepma}}
\newcommand{\AND}{\mathop{\binampersand}}
\newcommand{\OR}{\oplus}
\newcommand{\BS}{\mathop{\backslash}}
\newcommand{\SL}{\mathop{/}}
\newcommand{\np}{\bar{p}}
\newcommand{\Wc}{\mathcal{W}}
\newcommand{\Cc}{\mathcal{C}}
\newcommand{\Ec}{\mathcal{E}}
\newcommand{\Ic}{\mathcal{I}}
\newcommand{\CUT}{\mathrm{cut}}
\newcommand{\MIX}{\mathrm{mix}}
\newcommand{\CONTR}{\mathrm{contr}}
\newcommand{\NCONTR}{\mathrm{ncontr}}
\newcommand{\EXC}{\mathrm{ex}}
\newcommand{\WEAK}{\mathrm{weak}}
\newcommand{\AX}{\mathrm{ax}}
\newcommand{\TRANSFORM}{\qquad\mbox{\raisebox{1em}{\Large$\leadsto$}}\qquad}
\newcommand{\U}{\mathbf{1}}
\newcommand{\Z}{\mathbf{0}}
\newcommand{\Af}{\mathfrak{A}}
\newcommand{\Lambek}{\mathrm{SLC}_\Sigma}
\newcommand{\LambekA}{\mathrm{SMALC}_\Sigma}
\newcommand{\LambekE}{\mathrm{ELC}}
\newcommand{\Linlog}{\mathrm{SCLL}_\Sigma}
\newcommand{\SMCLL}{\mathrm{SMCLL}_\Sigma}
\newcommand{\Var}{\mathrm{Var}}
\newtheorem{theorem}{Theorem}
\newtheorem{corollary}[theorem]{Corollary}
\newtheorem{lemma}[theorem]{Lemma}
\begin{document}

\title{Subexponentials in Non-Commutative Linear Logic}

\renewcommand{\Affilfont}{\small}

\author[1,5]{Max Kanovich}
\author[2,5]{Stepan Kuznetsov}
\author[3]{Vivek Nigam}
\author[4,5]{\mbox{Andre Scedrov}}

\date{}

\affil[1]{University College London, UK}
\affil[2]{Steklov Mathematical Institute of the RAS, Moscow, Russia}
\affil[3]{Fortiss GmbH, Munich, Germany}
\affil[4]{University of Pennsylvania, Philadelphia, USA}
\affil[5]{National Research University Higher School of Economics, Moscow, Russia}

%\author{Max Kanovich$^{1,5}$, Stepan Kuznetsov$^{2,5}$,
%Vivek Nigam$^3$, Andre Scedrov$^{4,5}$ \\ 
%$^1$ University College London, UK\\
%$^2$ Steklov Mathematical Institute of RAS, Moscow, Russian Federation\\
%$^3$ Fortiss GmbH, Munich, Germany\\
%$^4$ University of Pennsylvania, Philadelphia, USA\\
%$^5$ National Research University Higher School of Economics, Moscow, Russian Federation
%}
%\shortauthor{Max Kanovich, Stepan Kuznetsov,
%Vivek Nigam, Andre Scedrov}

\maketitle

\begin{abstract}
Linear logical frameworks with subexponentials have been used for the specification of among other
systems, proof systems, concurrent programming languages and linear authorization logics. In these
frameworks, subexponentials can be configured to allow or not for the application of the contraction
and weakening rules while the exchange rule can always be applied. This means that formulae in such
frameworks can only be organized as sets and multisets of formulae not being possible to organize
formulae as lists of formulae. This paper investigates the proof theory of linear logic proof systems in
the non-commutative variant. These systems can disallow the application of exchange rule on some
subexponentials. We investigate conditions for when cut-elimination is admissible in the presence of
non-commutative subexponentials, investigating the interaction of the exchange rule with local and
non-local contraction rules. We also obtain some new undecidability and decidability results on non-commutative linear
logic with subexponentials.
\end{abstract}

\vskip 2em 
\begin{center}
\it To Dale Miller's Festschrift and his Contributions to Logic in Computer Science. Dale's work
has been an inspiration to us. He is a great researcher, colleague, advisor, and friend.
\end{center}

\section{Introduction}\label{S:intro} %(take vivek)
%\section{Non-Commutative Linear Logic with Subexponentials}

Logic and proof theory have played an important role in computer science. The introduction of linear logic by~\citet{Girard} is an example of how the beauty of logic can be applied to the principles of computer science. More than 20 years ago,~\citet{hodas91lics,hodas94ic} proposed the intuitionistic linear logical framework, Lolli, which distinguishes between to kinds of formulae: \emph{linear}, that cannot be contracted and weakened, and \emph{unbounded}, that can be contracted and weakened\footnote{The authors received the LICS Test of Time Award for this work.}. In contrast to existing intuitionistic/classical logical frameworks, Lolli allowed to express stateful computations using logical connectives. Some years later, Miller proposed the classical linear logical framework Forum~\citep{miller94lics,miller96tcs} demonstrating that linear logic can be used among other things to design concurrent systems\footnote{For this work, Miller received yet another LICS Test of Time Award prize.}.

%Forum sequents organizes formulae into two contexts $\Psi : \Delta$, where $\Psi$ is a set of unbounded formulae, and $\Delta$ is a multiset of linear %formulae. Formally, all formulae in $\Psi$ are assumed to be marked with a bang $!$. Thus, Forum allows to organize formulae into exactly one set of unbounded %formulae and exactly one multiset of linear formulae.

It has been known, however,  since Girard's original linear logic paper~\citep{Girard}, that the linear logic exponentials $!,?$ are not canonical.
Indeed, proof systems with non-equivalent exponentials~\citep{danos93kgc} can be formulated. \citet{nigam09ppdp} called them subexponentials and proposed a more expressive linear logical framework called SELL which allows for the specification of any number of non-equivalent subexponentials ${!}^s,{?}^s$. Each subexponential can be specified to behave as linear or as unbounded. This is reflected in the syntax. SELL sequents associate a different context to each subexponential. Thus formulae may be organized into a number of sets of unbounded formulae and a number of multisets of linear formulae. Nigam and Miller show that SELL is more expressive than Forum being capable of expressing algorithmic specifications in logic. In the recent years, it has been shown that SELL can also be used to specify linear authorization logics~\citep{nigam12lics,nigam14tcs}, concurrent constraint programming languages~\citep{nigam13concur,olarte15tcs} and proof systems~\citep{nigam16jlc}.

While these logical frameworks have been sucessfully used for a number of applications, they do not allow formulae to be organized as lists of formulae. This is because all the frameworks above assume that the exchange rule can be applied to any formula. This paper investigates the proof theory of 
subexponentials in non-commutative linear logic.
%subexponentials, that is, subexponentials for which the exchange rule cannot be applied. 
Our contribution is as follows:

%\begin{enumerate}
%  \item We investigate general conditions for when linear logic proof systems with possibly non-commutative subexponentials admit cut-elimination;%
%
%  \item We propose sound and complete dyadic system which incorporates strutuctural rules into introduction rules building thus the foundations towards linear %logical frameworks with non-commutative subexponentials.
%\end{enumerate}

\begin{enumerate}
\item We construct general non-commutative linear logic proof systems with subexponentials and investigate conditions for when these systems
enjoy cut-elimination and when they don't.
\item For systems, in which at least one subexponential obeys the contraction rule in its non-local form, we prove undecidability results.
\item For fragments, in which no subexponential obeys the contraction rule,
%all subexponentials that enjoy the contraction rule are applied only to atomic formulae (in particular, this is true when no subexponential allows  contraction), 
we prove decidability and establish exact complexity bounds which coincide with the complexity estimations for the corresponding systems without subexponentials:
NP for the purely multiplicative system, PSPACE for the system with additive connectives.
\end{enumerate}

%FIXME: a bit of subexponential business here (what is $\Sigma$ etc...)

The rest of this paper is organised as follows. In Sections~\ref{S:Lambek} and~\ref{S:Linlog} we
present two variants of non-commutative linear logic, resp., the multiplicative-additive Lambek calculus 
($\LambekA$) and
cyclic linear logic ($\Linlog$), enriched with subexponential modalities indexed by a subexponential
signature $\Sigma$.  In Section~\ref{S:cutelim} we establish the cut elimination property for
$\Linlog$ using the classical Gentzen's approach with a specific version of the mix rule.
In Section~\ref{S:embed} we show that $\LambekA$ can be conservatively embedded into $\Linlog$. This 
yields, as a side-effect, cut elimination for $\LambekA$. In Section~\ref{S:CutVsContr} we explain why we prefer the {\em non-local} version
of the contraction rule by showing that systems with only local contraction fail to enjoy the cut
elimination property. Section~\ref{S:undec} contains the proof of undecidability for systems
with contraction; in Section~\ref{S:decid} we prove decidability and establish complexity bounds for systems
without contraction. Section~\ref{S:future} is for conclusions and directions of future research.

\section{The Multiplicative-Additive Lambek Calculus with Subexponentials}\label{S:Lambek}
%TBA

We start with the Lambek calculus allowing empty antecedents~\citep{Lambek61}, considering it
as a non-commutative form of intuitionistic propositional linear logic~\citep{Abrusci}.
The original Lambek calculus includes only multiplicative connectives (multiplication and
two implications, called divisions). It is quite natural, however, to equip the Lambek calculus
also with additive connectives (conjunction and disjunction), as in linear 
logic~\citep{vanBenthemLIA,KanazawaJoLLI,BuszkoLCSL,KuznetsovOkhotin}. We'll call this
bigger system the {\em multiplicative-additive Lambek calculus} (MALC).
Extended versions of the Lambek calculus have broad linguistical applications, serving as 
a basis for categorial grammars~\citep{MoortgatHandbook,MorrillBook,MorrillPhilosophy,MootRetore}.

In this section we extend the multiplicative-additive Lambek calculus %(also called
%``full  Lambek calculus'') 
with a family of subexponential connectives.
First we fix a {\em subexponential signature} of the form
$$
\Sigma = \langle \mathcal{I}, \preceq, \mathcal{W}, \mathcal{C}, \mathcal{E} \rangle,
$$
where $\mathcal{I} = \{ s_1, \ldots, s_n \}$ is a set of subexponential labels with a preorder $\preceq$,
and $\mathcal{W}$, $\mathcal{C}$, and $\mathcal{E}$ are subsets of $\mathcal{I}$.
The sets $\mathcal{W}$, $\mathcal{C}$, and $\mathcal{E}$ are required to be upwardly closed with respect to $\preceq$. That is, if $s_1 \in \mathcal{W}$ and $s_1 \preceq s_2$, then $s_2 \in \mathcal{W}$ and
   ditto for the sets $\mathcal{E}$ and $\mathcal{C}$. Subexponentials
   marked with labels from $\Wc$ allow weakening, $\Cc$ allows contraction,
   and $\Ec$ allows exchange (permutation). Since contraction (in the non-local form,
   see below) and weakening yield exchange, here we explicitly require 
   $\Wc \cap \Cc \subseteq \Ec$.

Formulae are built from variables $p_1, p_2, p_3, \ldots$ and the unit
constant $\U$ using five binary connectives: $\cdot$ (product, or multiplicative 
conjunction), $\BS$ (left division), $\SL$ (right division),
$\wedge$ (additive conjunction), and $\vee$ (additive disjunction), and a family
of unary connectives, indexed by the subexponential signature $\Sigma$, denoted
by ${!}^s$ for each $s \in \Ic$.

The axioms and rules of the multiplicative-additive Lambek calculus with
subexponentials, denoted by $\LambekA$, are as follows:

$$
\infer[(\AX)]{A \to A}{}
$$

$$
\infer[(\cdot\to)]{\Gamma_1, A \cdot B, \Gamma_2 \to C}{\Gamma_1, A, B, \Gamma_2 \to C}
\qquad
\infer[(\to\cdot)]{\Gamma_1, \Gamma_2 \to A \cdot B}{\Gamma_1 \to A & \Gamma_2 \to B}
$$

$$
\infer[(\BS\to)]{\Gamma_1, \Pi, A \BS B, \Gamma_2 \to C}
{\Pi \to A & \Gamma_1, B, \Gamma_2 \to C}
\qquad
\infer[(\to\BS)]{\Pi \to A \BS B}{A, \Pi \to B}
$$

$$
\infer[(\SL\to)]{\Gamma_1, B \SL A, \Pi, \Gamma_2 \to C}
{\Pi \to A & \Gamma_1, B, \Gamma_2 \to C}
\qquad
\infer[(\to\SL)]{\Pi \to  B \SL A}{\Pi, A \to B}
$$

$$
\infer[(\U\to)]{\Gamma_1, \U, \Gamma_2 \to C}{\Gamma_1, \Gamma_2 \to C}
\qquad
\infer[(\to\U)]{\to\U}{}
$$

$$
\infer[(\vee\to)]{\Gamma_1, A_1 \vee A_2, \Gamma_2 \to C}
{\Gamma_1, A_1, \Gamma_2 \to C & \Gamma_1, A_2, \Gamma_2 \to C}
\qquad
\infer[(\to\vee),\text{ where $i = 1$ or 2}]{\Gamma \to A_1 \vee A_2}{\Gamma \to A_i}
$$

$$
\infer[(\wedge\to),\text{ where $i = 1$ or 2}]{\Gamma_1, A_1 \wedge A_2, \Gamma_2 \to C}
{\Gamma_1, A_i, \Gamma_2 \to C}
\qquad
\infer[(\to\wedge)]{\Gamma \to A_1 \wedge A_2}{\Gamma \to A_1 & \Gamma \to A_2}
$$

$$
\infer[({!}\to)]{\Gamma_1, {!}^s A, \Gamma_2 \to C}{\Gamma_1, A, \Gamma_2 \to C}
\qquad
\infer[(\to{!}),\text{ where $s_j \succeq s$ for all $j$}]{{!}^{s_1} A_1, 
\dots, {!}^{s_n} A_n \to {!}^s B}
{{!}^{s_1} A_1, 
\ldots, {!}^{s_n} A_n \to B}
$$

$$
\infer[(\WEAK),\text{ where $s \in \Wc$}]{\Gamma_1, {!}^s A, \Gamma_2 \to C}
{\Gamma_1, \Gamma_2 \to C}
$$

$$
\infer[(\NCONTR_1)]{\Gamma_1, {!}^s A, \Delta, \Gamma_2 \to C}
{\Gamma_1, {!}^s A, \Delta, {!}^s A, \Gamma_2 \to C}
\quad\text{ \raisebox{1em}{and} } \quad
\infer[(\NCONTR_2),\text{ where $s \in \Cc$}]{\Gamma_1,  \Delta, {!}^s A, \Gamma_2 \to C}
{\Gamma_1, {!}^s A, \Delta, {!}^s A, \Gamma_2 \to C}
$$

$$
\infer[(\EXC_1)]{\Gamma_1, {!}^s A, \Delta, \Gamma_2 \to C}
{\Gamma_1, \Delta, {!}^s A, \Gamma_2 \to C}
\quad\text{ \raisebox{1em}{and} } \quad
\infer[(\EXC_2),\text{ where $s \in \Ec$}]{\Gamma_1,  \Delta, {!}^s A, \Gamma_2 \to C}
{\Gamma_1, {!}^s A, \Delta, \Gamma_2 \to C}
$$

$$
\infer[(\CUT)]{\Gamma_1, \Pi, \Gamma_2 \to C}{\Pi \to A & \Gamma_1, A, \Gamma_2 \to C}
$$

%... FIXME

%FIXME: problems with Lambek's restriction (refer to LFCS and arXiv preprint)

Due to the special status of the cut rule, we always explicitly state whether we're using
it in our derivations. Namely, we use the notation $\LambekA$ for the cut-free calculus and
$\LambekA + (\CUT)$ for the calculus with the cut rule.

It is sufficient to postulate $(\AX)$ only for variables, in the form $p_i \to p_i$. All other
instances of $A \to A$ are then derivable in a standard manner, without using $(\CUT)$.
For the subexponential case, derivability of ${!}^s A \to {!}^s A$ is due to the reflexivity 
of ${\preceq}$.

In Section~\ref{S:embed} we prove the cut elimination theorem for $\LambekA$
(Corollary~\ref{Cor:Lambekcutelim}), that is, $\LambekA + (\CUT)$ and
$\LambekA$ derive the same set of theorems. This
yields the subformula property, and therefore it becomes very easy to consider
fragments of the system by restricting
the language. If we take only rules that operate multiplicative connectives,
$\cdot$, $\BS$, and $\SL$, and rules that operate subexponentials, ${!}^s$ 
($s \in \Ic$), we obtain
the subexponential extension of the ``pure'' Lambek calculus, 
%{\em multiplicative-subexponential Lambek calculus} 
denoted by $\Lambek$.
If we also take the unit constant, $\U$, we get the calculus $\Lambek^\U$. 
Finally, removing rules for subexponentials yields, respectively, the Lambek
calculus allowing empty antecedents~\citep{Lambek61} and the Lambek calculus with the unit~\citep{Lambek69}.
All these calculi are conservative fragments of $\LambekA$.

Notice that the version of the Lambek calculus considered in this paper 
allows the antecedents
of sequents to be empty, while the original system by~\citet{Lambek58} doesn't.
This constraint, called {\em Lambek's restriction,} is motivated by linguistic
applications of the Lambek calculus. This restriction, however, appears to be
incompatible with (sub)exponential modalities~\citep{KanKuzSceAPAL,KanKuzSceLFCS}.

\section{Cyclic Linear Logic with Subexponentials}\label{S:Linlog}

In this section we define the second calculus considered in this paper, the extension of
{\em cyclic linear logic}~\citep{Yetter}
with subexponentials. For a subexponential signature $\Sigma = \langle \Ic, {\preceq}, \Wc, \Cc, \Ec \rangle$, this calculus is denoted by $\Linlog$.

We formulate $\Linlog$ in a language with {\em tight negations.} For a countable set of variables
$\Var = \{ p_1, p_2, \ldots \}$, we also consider their negations $\bar{p}_1, \bar{p}_2, \ldots$;
variables and their negations are called {\em atoms.}
Formulae of $\Linlog$ are built from atoms and constants $\U$ (multiplicative truth), $\bot$
(multiplicative falsity), $\top$ (additive truth), and $\Z$ (additive falsity) using four binary connectives:
$\Tensor$ (multiplicative conjunction), $\Par$ (multiplicative disjunction), $\AND$ (additive conjunction),
and $\OR$ (additive disjunction), and also two families of unary connectives, indexed by the subexponential
signature $\Sigma$: $!^{s}$ (universal subexponential) and $?^{s}$ (existential subexponential) for each $s \in \Ic$
(recall that $\Sigma = \langle \Ic, {\preceq}, \Wc, \Cc, \Ec \rangle$, and $\Ic$ is the set of all subexponential labels).

Negation for arbitrary formulae introduced externally by the following recursive definition ($A^\bot$ means ``not $A$''):
\begin{align*}
& p_i^\bot = \np_i && ({!}^s A)^\bot = {?}^s A^\bot\\
& \np_i^\bot = p_i && ({?}^s A)^\bot = {!}^s A^\bot\\
& (A \Tensor B)^\bot = B^\bot \Par A^\bot && \U^\bot = \bot\\
& (A \Par B)^\bot = B^\bot \Tensor A^\bot && \bot^\bot = \U\\
& (A \OR B)^\bot = A^\bot \AND B^\bot && \Z^\bot = \top \\
& (A \AND B)^\bot = A^\bot \OR B^\bot && \top^\bot = \Z
\end{align*}

Sequents of $\Linlog$ are of the form $\vdash \Gamma$ where $\Gamma$ in $\Linlog$ is a non-empty {\em cyclically ordered sequence:} 
sequents $\vdash \Gamma_1, \Gamma_2$ and $\vdash \Gamma_2, \Gamma_1$ are considered graphically equal, but other permutations of formulae
within $\vdash \Gamma$ are not allowed.

The axioms and rules of inference of $\Linlog$ are as follows:

$$
\infer[(\AX)]{\vdash A, A^\bot}{}
$$

%FIXME: refer to LMSS, Girard ... cyclic is THE non-comm LL

%FIXME: primitive axioms

$$
\infer[(\Tensor)]{\vdash \Gamma, A \Tensor B, \Delta}{\vdash \Gamma, A & \vdash B, \Delta}
\qquad
\infer[(\Par)]{\vdash A \Par B, \Gamma}{\vdash A, B, \Gamma}
$$

$$
\infer[(\AND)]{\vdash A_1 \AND A_2, \Gamma}{\vdash A_1, \Gamma & \vdash A_2, \Gamma}
\qquad
\infer[(\OR),\text{ where $i=1$ or 2}]{\vdash A_1 \OR A_2, \Gamma}{\vdash A_i, \Gamma}
$$

$$
\infer[(\U)]{\hspace*{.5em}\vdash \U\hspace*{.5em}}{}
\qquad
\infer[(\bot)]{\vdash \bot, \Gamma}{\vdash \Gamma}
\qquad
\infer[(\top)]{\vdash \top, \Gamma}{}
$$

%NB. no rule for $\Z$ (the only way to introduce is by
%$(\top)$ or $(\AX)$)

$$
\infer[(!)\text{, where $s_j \succeq s$ for all $j$}]{\vdash {!}^{s} B, {?}^{s_1} A_1, \ldots, {?}^{s_n} A_n}{\vdash B, {?}^{s_1} A_1, \ldots, {?}^{s_n} A_n}
$$

$$
\infer[(?)]{\vdash {?}^s A, \Gamma}{\vdash A, \Gamma}
$$

$$
\infer[(\WEAK)\text{, where $s \in \Wc$}]{\vdash {?}^s A, \Gamma}{\vdash \Gamma}
$$

$$
\infer[(\NCONTR)\text{, where $s \in \Cc$}]{\vdash {?}^s A, \Gamma, \Delta}
{\vdash {?}^s A, \Gamma, {?}^s A, \Delta}
$$

$$
\infer[(\EXC)\text{, where $s \in \Ec$}]{\vdash {?}^s A, \Gamma, \Delta}
{\vdash \Gamma, {?}^s A, \Delta}
$$

$$
\infer[(\CUT)]{\vdash \Gamma, \Delta}{\vdash \Gamma, A^\bot & A, \Delta}
$$

Note that there is no rule for additive falsity, $\Z$. The only way to introduce
$\Z$ is by $(\top)$ or $(\AX)$, yielding $\vdash \top, \Gamma_1, \Z, \Gamma_2$
(if we use $(\AX)$, $\Gamma_1$ and $\Gamma_2$ are empty).

%CALCULUS WITHOUT CYCLIC RULE (sequent = linearly ordered; or better: formulate like this for convenience: all possible
%cyclic transformations of the rules. ... we also present the logical rules in a form where the active formula is not the left-
%or rightmost one. 

Also notice that we can freely apply cyclic transformations
to our sequents, yielding rules of the form
$$
\infer[(\Par)]
{\vdash \Gamma_1, A \Par B, \Gamma_2}{\vdash \Gamma_1, A, B, \Gamma_2}
\qquad
\infer[(\Tensor_1)]
{\vdash \Gamma_1, A \Tensor B, \Delta, \Gamma_2}{\vdash \Gamma_1, A, \Gamma_2 & \vdash B, \Delta}
\qquad
\infer[(\Tensor_2)]
{\vdash \Gamma_1, \Delta, A \Tensor B, \Gamma_2}{\vdash \Delta, A & \vdash \Gamma_1, B, \Gamma_2}
$$
and so on. Due to our conventions, these rules are actually {\em graphically equal} to the 
official rules of $\Linlog$ presented above. Sometimes, however, these transformed
forms of the rules are more convenient---for example, if we want a specific designated
formula to be the rightmost one (see proof of Theorem~\ref{Th:embed}).

As in $\LambekA$, in $\Linlog$ it is sufficient to postulate $(\AX)$ only for variables, as
$\vdash p_i, \np_i$.

As for the Lambek calculus, we use the notation $\Linlog$ for the cut-free system,
and $\Linlog + (\CUT)$ for the system with cut. In Section~\ref{S:cutelim} we establish
cut elimination, that yields the subformula property. If we remove all additives connectives
and rules for them, leaving only $\U$, $\bot$, $\Tensor$, $\Par$, and the subexponentials,
we get the {\em multiplicative} fragment of cyclic linear logic with subexponentials,
denoted by $\mathrm{SMCLL}_\Sigma$.

\section{Cut Elimination in $\Linlog$}\label{S:cutelim}

%FIXME: in formulation: we use $\Linlog$ do denote the cut-free system, and
%$\Linlog + (\CUT)$ for the calculus with cut.

%Officially, the cut rule is not included in the calculi $\Lambek$ and $\Linlog$ presented %above.
%We'll show that cut is admissible in both calculi. ....

\begin{theorem}\label{Th:cutelim}
A sequent is derivable in $\Linlog + (\CUT)$ if and only if it is derivable in
$\Linlog$.
%The cut rule of the form
%$$
%\infer[(\CUT)]{\Gamma, \Delta}{\Gamma, A^\bot & A, \Delta}
%$$
%is admissible in $\Linlog$, {\em i.\,e.} if $\Gamma, A^\bot$ and
%$A, \Delta$ are derivable in $\Linlog$, then so is $\Gamma, \Delta$.
\end{theorem}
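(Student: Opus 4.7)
The plan is to follow the classical Gentzen-style strategy, substituting $(\CUT)$ with a generalised mix rule $(\MIX)$ that allows one to cut a formula ${!}^{s} A^\bot$ against any number $k \ge 1$ of designated occurrences of ${?}^{s} A$ in a single inference, inserting copies of the left context appropriately so that the cyclic order of the conclusion is well-defined. First I would show that $\Linlog + (\CUT)$ and $\Linlog + (\MIX)$ derive exactly the same sequents: one direction is trivial (take $k=1$), while the other direction simulates $(\MIX)$ by iterated $(\CUT)$, using $(\NCONTR)$ to duplicate the left premise whenever $s \in \Cc$, and arguing directly (one application of cut suffices for each copy) otherwise. Thus it suffices to prove that every derivation in $\Linlog + (\MIX)$ can be transformed into a cut-free derivation of the same sequent.

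The heart of the argument is a double induction on the pair $(c, h)$, ordered lexicographically, where $c$ is the complexity of the mix formula $A$ and $h$ is the sum of the heights of the two subderivations immediately above the lowermost $(\MIX)$. I would analyse the last rules applied in each premise. In the \emph{principal} cases, where both premises introduce the mix formula by a logical rule, the mix reduces to one or two mixes on strict subformulae of $A$, lowering $c$. The critical principal subexponential case is $(!)$ on the left against $(\NCONTR)$ (iterated) on the right: here the ability of $(\MIX)$ to eliminate all copies at once is essential, as single cuts would reproduce the original situation. For $(\WEAK)$ on the right, the left premise is discarded and the weakening is used to restore the context (using $s \in \Wc$); the condition $\Wc \cap \Cc \subseteq \Ec$ then guarantees we can rearrange to match the required cyclic shape when both weakening and contraction are in play. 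In the \emph{commutative} cases I permute $(\MIX)$ upward past the last inference of whichever premise did not introduce the mix formula; this strictly decreases $h$ while leaving $c$ unchanged.

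Special attention is needed for the rules peculiar to cyclic linear logic. The $(!)$ rule requires the entire remaining context of the left premise to consist of ${?}^{s_j} A_j$ with $s_j \succeq s$, which guarantees that when we substitute copies of this context into the right premise, we can still apply $(?)$, $(\NCONTR)$, $(\EXC)$, and $(\WEAK)$ as needed to absorb them, since upward closure of $\Wc$, $\Cc$, $\Ec$ under $\preceq$ passes these capabilities along. The $(\top)$ rule disposes of any mix whose other premise ends there. For $(\AX)$, the mix is immediately eliminated by renaming. The $(\bot)$ and $(\U)$ cases are routine commutative permutations.

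The main obstacle I anticipate is the bookkeeping of cyclic positions in the commutative cases, particularly when $(\MIX)$ is permuted above a $(\Tensor)$ inference that splits the sequent: the $k$ designated copies of ${?}^{s}A$ must be distributed between the two premises of $(\Tensor)$ and the cut context correctly re-inserted on each side while preserving cyclic order. A similar difficulty arises for $(\NCONTR)$ and $(\EXC)$ applied to a ${?}^{s'}$ formula distinct from the mix formula, where one must verify that the non-local movement of that formula does not disturb the positions designated for the mix. These cases are technical but follow a uniform pattern: each cyclic permutation of the conclusion lifts to a corresponding cyclic permutation of the premises, so that the induction measure decreases exactly as in the ordinary (non-cyclic) case treated by \citet{Yetter}.
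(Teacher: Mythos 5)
Your overall strategy is the same as the paper's: a Gentzen-style argument with a non-local mix rule for contraction-enabled subexponentials, lexicographic induction on (complexity of the cut formula, sum of premise heights), and the same key uses of upward closure of $\Wc,\Cc,\Ec$ under $\preceq$ and of the condition $\Wc\cap\Cc\subseteq\Ec$ in the weakening-versus-mix case. The one place where your setup as written does not go through is the opening reduction. You define $(\MIX)$ as cutting ${!}^{s}A^\bot$ against $k\ge 1$ occurrences of ${?}^{s}A$, and then claim $\Linlog+(\CUT)\subseteq\Linlog+(\MIX)$ ``trivially, take $k=1$.'' But a cut on an atom, a $\Tensor/\Par$ formula, or any formula not of the form ${?}^{s}A$ with $s\in\Cc$ is not an instance of that mix rule, so replacing $(\CUT)$ wholesale by $(\MIX)$ is not justified; this is exactly why the paper keeps both rules and eliminates them by a joint induction, noting explicitly that here, unlike in Gentzen's classical situation, cut is not a special case of mix. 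The repair is easy (either extend $(\MIX)$ to arbitrary cut formulae at $k=1$, or run the joint induction), and the remainder of your case analysis --- including the $(\Tensor)$-splitting case, which you should note is resolved not by cyclic re-bracketing alone but by duplicating the $(!)$-context $\Gamma$ on both branches and contracting it afterwards, legitimate because every formula of $\Gamma$ carries a label $s_i\succeq s\in\Cc$ --- matches the paper's proof. One further small imprecision: in the principal $({?})$-versus-$({!})$ case with $k\ge 2$ copies, the reduction produces one mix of the same complexity with smaller height together with one cut of smaller complexity, so it is not true that every principal step lowers $c$; the lexicographic measure still decreases, but you should state the case that way.
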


The cut elimination strategy we use here goes back to~\citet{Gentzen}, and
was applied for linear logic by~\citet{Girard}. We follow the outline of
the proof presented in~\citep[Appendix~A]{LMSS}, making necessary
modifications for the cases where exchange rules are not available.
%Our cut elimination proof is a non-commutative variant of the cut elimination
%proof of Lincoln et al.~\cite{LMSS}. This strategy goes back to Gentzen~\cite{Gentzen}:

Since eliminating the cut rule by straightforward induction encounters problems when
it comes across the contraction rule, we consider 
the cut rule together with a more general rule called {\em mix,} which is a combination
of cut and contraction. The two rules can now be eliminated by joint induction
(which is impossible for the original cut rule alone).

Another possible cut elimination strategy for $\Linlog$ is ``deep cut
elimination'' of~\citet{dePaiva}. % FIXME: other refs.
This strategy is applied by~\citet{KanKuzSceFCT} to establish cut elimination 
in a system closely related to $\Lambek$, but with bracket modalities that
introduce controlled non-associativity, which makes it hard to formulate the mix rule.
In this paper we follow the more traditional approach.

Since mix needs contraction, it is included only for formulae of the form $?^{s} A$ with $s \in \Cc$.
Thus, unlike the classic Gentzen's situation, $(\CUT)$ is not always a particular case of $(\MIX)$, and in our
proof we eliminate both cut and mix by joint induction.

If $s \in \Cc \cap \Ec$ ({\em i.\,e.,} $?^{s}$ also allows exchange---in particular, this is the
case for the ``full-power'' exponential connective of linear logic), the mix rule can be formulated
exactly as in the commutative case:
$$
\infer[(\MIX)]
{\vdash \Gamma, \Delta}
{\vdash \Gamma, !^{s} A^\bot & \vdash ?^{s}A, \ldots, ?^{s}A, \Delta}
$$

For $s \in \Cc - \Ec$, however, the formulation of mix is more sophisticated, since we are not allowed
to gather all instances of $?^{s}A$ in one area of the sequent:
$$
\infer[(\MIX)]
{\vdash \Gamma, \Delta_1, \Delta_2, \ldots, \Delta_k}
{\vdash \Gamma, !^{s} A^\bot & \vdash ?^{s} A, \Delta_1, ?^{s} A, \Delta_2, \ldots, {?}^s A, \Delta_k}
$$
In this rule, one instance of $?^{s} A$ is replaced with $\Gamma$ (due to cyclicity we can suppose that
it is the leftmost occurrence), and several (maybe zero) other occurrences of $?^{s} A$ are removed from
the sequent.

Being equivalent to a consequent application of several $(\NCONTR)$'s and $(\CUT)$, the mix rule is clearly
admissible in $\Linlog + (\CUT)$. %On the other hand, contrary to the classical
%Gentzen's situation, $(\MIX)$ does not subsume $(\CUT)$, since $(\MIX)$ operates
%only with formulae of the form ${?}^s A$, where $s \in \Cc$.

As in the commutative case, cut elimination crucially depends on the fact that
the {$\preceq$} relation is transitive and that the sets $\Wc$, $\Cc$, and $\Ec$ are upwardly closed w.r.t.\ {$\preceq$}.
These parts of the definition of the substructural signature $\Sigma$ come into play when
we propagate $(\CUT)$ or $(\MIX)$ through the $({!})$ rule that yields
$\vdash {?}^{s_1} C_1, \ldots, {?}^{s_n} C_n, {!}^s A^\bot$. In this situation,
the formula ${?}^s A$ get replaced by a sequence ${?}^{s_1} C_1, \ldots, {?}^{s_n} C_n$, and we need the same
structural rules to be valid for ${?}^{s_i} C_i$, as for ${?}^s A$. This is guaranteed by the fact that
$s_i \succeq s$ (a prerequisite of the $({!})$ rule) and the closure properties of $\Sigma$.

In the non-commutative situation, however, there is another issue one should be cautious about. 
For cut elimination, it is important that the contraction rule is non-local, {\em i.e.,} the formulae
being contracted can come from distant places of the sequent, with other formulae ($\Gamma$) between
them. Accordingly, our formulation of $(\MIX)$ for subexponentials that allow contraction, but not
exchange, is also non-local, with $\Delta_i$ between the active formulae. In Section~\ref{S:CutVsContr}
we show that for the local version of contraction, that allows contracting only neighbour formulae,
cut elimination doesn't hold.

\begin{proof}[Proof of Theorem~\ref{Th:cutelim}.]
As usual, it is sufficient to eliminate one cut or mix, {\em i.e.,} to show the following
two statements:
\begin{itemize}
\item if both $\vdash \Gamma, A^\bot$ and $\vdash A, \Delta$ are derivable in $\Linlog$,
then so is $\vdash \Gamma, \Delta$;
\item if $s \in \Cc$ and both $\vdash \Gamma, {!}^s A^\bot$ and
$\vdash {?}^s A, \Delta_1, {?}^s A, \Delta_2, \ldots, {?}^s A, \Delta_k$ are derivable
in $\Linlog$, then so is $\vdash \Gamma, \Delta_1, \ldots, \Delta_k$.
\end{itemize}

We prove both statements by joint nested induction. The outer induction
parameter is $\kappa$, the total number of connectives in the formula being cut
(for $(\MIX)$, the external ${?}^s$ also counts). The inner induction parameter 
is $\delta$, the sum of the heights of the cut-free derivations of two premises,
$\vdash \Gamma, A^\bot$ and $\vdash A, \Delta$ for $(\CUT)$ and
$\vdash \Gamma, {!}^s A^\bot$ and
$\vdash {?}^s A, \Delta_1, {?}^s A, \Delta_2, \ldots, {?}^s A, \Delta_k$ for $(\MIX)$.
At each step, either $\kappa$ decreases, or $\delta$ decreases while $\kappa$ remains 
the same.

The cut (mix) elimination procedure is usually very lengthy and tedious, since it
requires considering a great number of cases and subcases of which rules are
the last rules applied in the (cut-free) derivations of the premises of $(\CUT)$ or
$(\MIX)$. Here we try to make it as short as possible by merging similar cases. 

\subsection*{Cut Elimination}
The cut elimination procedure is a rather standard, straightforward induction. When
we come across the $(\NCONTR)$ rule, $(\CUT)$ becomes $(\MIX)$, and we jump to the
second, more interesting part of the proof.

The last rule applied in the derivation of $\vdash \Gamma, A^\bot$ (or, symmetrically,
$\vdash A, \Delta$) is called {\em principal} either if it is an application
of the $({!})$ rule or if it introduces the rightmost $A^\bot$ (symmetrically,
the leftmost $A$) formula. Otherwise it is called {\em non-principal.}

{\it Case 1.} One of the cut premises is an axiom of the form $(\AX)$.
Then the goal sequent coincides with the other premise, and cut disappears.

{\it Case 2.} The last rule in the derivation either of $\vdash \Gamma, A^\bot$ or of
$\vdash A, \Delta$ is non-principal.

Since $A^{\bot\bot} = A$, the cut (but not mix) rule is {\em symmetric.} Therefore,
we don't have to consider both $\vdash \Gamma, A^\bot$ and
$\vdash A, \Delta$; handling only $\vdash \Gamma, A^\bot$ is sufficient.

Let us call $(\Par)$, $(\OR)$, $(\bot)$, $(?)$, $(\WEAK)$, $(\NCONTR)$, and
$(\EXC)$ {\em easy} rules. An easy rule doesn't branch the derivation, it
only transforms the sequent, and, in the non-principal case, keeps the formula
being cut intact. If $\vdash \Gamma, A^\bot$ is derived using
an easy rule, the cut application has the following form
(``ER'' stands for ``easy rule''):
$$
\infer[(\CUT)]{\vdash \Gamma, \Delta}{\infer[(\mathrm{ER})]{\vdash \Gamma, A^\bot}
{\vdash \widetilde{\Gamma}, A^\bot} & 
\vdash A, \Delta}
$$
and the cut is propagated:
$$
\infer[(\mathrm{ER})]{\vdash \Gamma, \Delta}
{\infer[(\CUT)]{\vdash \widetilde{\Gamma}, \Delta}{\vdash \widetilde{\Gamma}, A^\bot &
\vdash A, \Delta}}
$$
The easy rule is still valid in a different context. The new cut has the same $\kappa$
and a smaller $\delta$ parameter, and gets eliminated by induction.

The other non-principal cases, $(\Tensor)$, $(\AND)$, and $(\top)$, are handled
as follows:
$$
\infer[(\CUT)]{\vdash \Gamma_1, \Phi, E \Tensor F, \Gamma_2, \Delta}
{\infer[(\Tensor)]{\vdash \Gamma_1, \Phi, E \Tensor F, \Gamma_2, A^\bot}
{\vdash \Phi, E & \vdash \Gamma_1, F, \Gamma_2, A^\bot} & 
\vdash A, \Delta}
\TRANSFORM
\infer[(\Tensor)]{\vdash \Gamma_1, \Phi, E \Tensor F, \Gamma_2, \Delta}
{\vdash \Phi, E & \infer[(\CUT)]{\vdash \Gamma_1, F, \Gamma_2, \Delta}
{\vdash \Gamma_1, F, \Gamma_2, A^\bot & \vdash A, \Delta}}
$$
(The case when $A^\bot$ goes to the branch with $F$ is symmetric.)
$$
\infer[(\CUT)]{\vdash \Gamma_1, E_1 \AND E_2, \Gamma_2, \Delta}
{\infer[(\AND)]{\vdash \Gamma_1, E_1 \AND E_2, \Gamma_2, A^\bot} 
{\vdash \Gamma_1, E_1, \Gamma_2, A^\bot & \vdash \Gamma_1, E_1 \AND E_2, \Gamma_2, A^\bot}
& 
\vdash A, \Delta}
$$
becomes
$$
\infer[(\AND)]{\vdash \Gamma_1, E_1 \AND E_2, \Gamma_2, \Delta}
{\infer[(\CUT)]{\vdash \Gamma_1, E_1, \Gamma_2, \Delta}
{\vdash \Gamma_1, E_1, \Gamma_2, A^\bot & \vdash A, \Delta} &
\infer[(\CUT)]{\vdash \Gamma_1, E_2, \Gamma_2, \Delta}
{\vdash \Gamma_1, E_2, \Gamma_2, A^\bot & \vdash A, \Delta}
}
$$

$$
\infer[(\CUT)]{\vdash \Gamma_1, \top, \Gamma_2, \Delta}
{\infer[(\top)]{\vdash \Gamma_1, \top, \Gamma_2, A^\bot}{} & \vdash A, \Delta}
\TRANSFORM
\infer[(\top)]{\vdash \Gamma_1, \top, \Gamma_2, \Delta}{}
$$
For $(\Tensor)$ and $(\AND)$, the $\delta$ parameter decreases with the
same $\kappa$. For $(\top)$, cut disappears.

Applications of $(\U)$ and $({!})$ cannot be non-principal.

{\it Case 3.} The last rules in both derivations are principal, and the main 
connective of $A$ is not a subexponential. Consider the possible pairs of principal
rules; due to symmetry of cut, the order in these pairs doesn't matter.

{\it Subcase 3.1.} $(\Tensor)$ and $(\Par)$

$$
\infer[(\CUT)]{\vdash\Gamma_1, \Gamma_2, \Delta}
{\infer[(\Tensor)]{\vdash \Gamma_1, \Gamma_2, F^\bot \Tensor E^\bot}
{\vdash \Gamma_2, F^\bot & \vdash \Gamma_1, E^\bot} 
& \infer[(\Par)]{\vdash E \Par F, \Delta}{\vdash E, F, \Delta}}
\TRANSFORM
\infer[(\CUT)]{\vdash\Gamma_1, \Gamma_2, \Delta}
{\Gamma_2, F^\bot & \infer[(\CUT)]{\Gamma_1, F, \Delta}
{\Gamma_1, E^\bot & E, F, \Delta}}
$$
The $\kappa$ parameter for both new cuts is less than $\kappa$ of the
original cut, thus we cant proceed by induction.

{\it Subcase 3.2.} $(\AND)$ and $(\OR)$

$$
\infer[(\CUT)]{\vdash \Gamma, \Delta}
{\infer[(\AND)]{\vdash \Gamma, E_1^\bot \AND E_2^\bot}
{\vdash \Gamma, E_1^\bot & \vdash \Gamma, E_2^\bot} & 
\infer{\vdash E_1 \OR E_2, \Delta}{\vdash E_i, \Delta}}
\TRANSFORM
\infer[(\CUT)]{\vdash \Gamma, \Delta}
{\vdash \Gamma, E_i^\bot & \vdash E_i, \Delta}
$$
Again, $\kappa$ gets decreased.

{\it Subcase 3.3.} $(\U)$ and $(\bot)$

$$
\infer[(\CUT)]{\vdash\Delta}
{\infer[(\U)]{\vdash \U}{} & \infer[(\bot)]{\vdash \bot, \Delta}{\vdash \Delta}}
$$
Cut disappears, since its goal coincides with the premise of $(\bot)$, which is
already derived.

In the principal case, we don't need to consider the $(\top)$ rule, since it has no principal
counterpart that introduces $\top^\bot = \Z$.

{\it Case 4.} Both last rules are principal, $A = {?}^s B$, and
$A^\bot = {!}^s B^\bot$. The left premise, $\vdash \Gamma, {!}^s B^\bot$, is
derived using $(!)$ introducing ${!}^s B^\bot$. Therefore,
$\Gamma = {?}^{s_1} C_1, \ldots, {?}^{s_n} C_n$, where $s_i \succeq s$ for all $i$.
Consider the possible cases for the last rule in the derivation of the
other premise, $\vdash {?}^s A, \Delta$.

{\it Subcase 4.1.} The last rule is $({?})$:
$$
\infer[(\CUT)]{\vdash \Gamma, \Delta}
{\infer[(!)]{\vdash \Gamma, {!}^s B^\bot}{\vdash \Gamma, B^\bot} & 
\infer[(?)]{\vdash {?}^s B, \Delta}{\vdash B, \Delta}}
\TRANSFORM
\infer[(\CUT)]{\vdash \Gamma, \Delta}
{{\vdash \Gamma, B^\bot} & 
{\vdash B, \Delta}}
$$
The $\kappa$ parameter gets decreased.

%--- transitivity of $\preceq$

{\it Subcase 4.2.} The last rule is $({!})$:
$$
\infer[(\CUT)]
{\vdash {?}^{s_1} C_1, \ldots, {?}^{s_n} C_n, {?}^{q_1} D_1, \ldots,
{?}^{q_i} D_i, {!}^q E, {!}^{q_{i+1}} D_{i+1}, \ldots, {?}^{q_m} D_m}
{\vdash {?}^{s_1} C_1, \ldots, {?}^{s_n} C_n, {!}^s B^\bot & 
\infer[(!)]{\vdash {?}^s B, {?}^{q_1} D_1, \ldots,
{?}^{q_i} D_i, {!}^q E, {!}^{q_{i+1}} D_{i+1}, \ldots, {?}^{q_m} D_m}
{\vdash {?}^s B, {?}^{q_1} D_1, \ldots,
{?}^{q_i} D_i,  E, {!}^{q_{i+1}} D_{i+1}, \ldots, {?}^{q_m} D_m}}
$$
becomes
$$
\infer[(!)]
{\vdash {?}^{s_1} C_1, \ldots, {?}^{s_n} C_n, {?}^{q_1} D_1, \ldots,
{?}^{q_i} D_i, {!}^q E, {!}^{q_{i+1}} D_{i+1}, \ldots, {?}^{q_m} D_m}
{\infer[(\CUT)]{\vdash {?}^{s_1} C_1, \ldots, {?}^{s_n} C_n, {?}^{q_1} D_1, \ldots,
{?}^{q_i} D_i, E, {!}^{q_{i+1}} D_{i+1}, \ldots, {?}^{q_m} D_m}
{\vdash {?}^{s_1} C_1, \ldots, {?}^{s_n} C_n, {!}^s B^\bot & 
\vdash {?}^s B, {?}^{q_1} D_1, \ldots,
{?}^{q_i} D_i,  E, {!}^{q_{i+1}} D_{i+1}, \ldots, {?}^{q_m} D_m}
}
$$
where the new application of $(!)$ is legal due to transitivity of
$\preceq$: $s_i \succeq s \succeq q$.
The $\kappa$ parameter is the same, $\delta$ decreases.

{\it Subcase 4.3.} The last rule is $(\WEAK)$. In this case, since 
$s_i \succeq s$ and $s \in \Wc$, then also $s_i \in \Wc$, and $\Gamma = 
{?}^{s_1} C_1, \ldots, {?}^{s_n} C_n$ can be added to $\Delta$ using 
the weakening rule $n$ times. Cut disappears.
%$$
%FIXME
%$$

{\it Subcase 4.4.} The last rule is $(\NCONTR)$. In this case cut is
replaced mix with the same $\kappa$ and a smaller $\delta$:
$$
\infer[(\CUT)]{\vdash \Gamma, \Delta_1, \Delta_2}
{\vdash \Gamma, {!}^s B^\bot & 
\infer[(\NCONTR)]{\vdash {?}^s B, \Delta_1, \Delta_2}
{\vdash {?}^s B, \Delta_1, {?}^s B, \Delta_2}}
\TRANSFORM
\infer[(\MIX)]{\vdash \Gamma, \Delta_1, \Delta_2}
{\vdash \Gamma, {!}^s B^\bot & 
{\vdash {?}^s B, \Delta_1, {?}^s B, \Delta_2}}
$$

%replace my mix! with the same $\kappa$ and a smaller $\delta$ parameter

{\it Subcase 4.5.} The last rule is $(\EXC)$. Similarly to Subcase~4.3, $s_i \in \Ec$,
and we can apply the exchange rule for $\Gamma$ as a whole. This means that $(\CUT)$ can
be interchanged with $(\EXC)$, decreasing $\delta$ with the same $\kappa$.
%$$
%FIXME
%$$

\subsection*{Mix Elimination}

For the left premise, the definition of principal rule is the same as for $(\CUT)$.
For the right one, a rule is principal if it is $({!})$ or operates with one of
the ${?}^s A$ formulae used in $(\MIX)$.
Eliminating mix is easier, since now  principal rules
could be only rules for subexponentials, and thus we have to consider
a smaller number of cases. Moreover, we can assume that $k \geq 2$,
since mix with $k = 1$ is actually cut.

{\em Case 1.} One of the mix premises is an axiom of the form $(\AX)$. Then,
as for $(\CUT)$, the goal coincides with the other premise.

{\em Case 2.} The last rule in the derivation of the left premise,
$\vdash \Gamma, {!}^s A^\bot$, is non-principal. In this case we proceed exactly
as in the non-principal case for $(\CUT)$: the mix rule gets propagated to
the left, and $\delta$ decreases with the same $\kappa$.

{\em Case 3.} The last rule in the left derivation is principal and the last rule in 
the right one is non-principal. In this case the rule on the left is $(!)$, 
introducing ${!}^s A^\bot$.
The interesting situation here is the $(\Tensor)$ rule yielding the right
premise, ${?}^s A, \Delta_1, {?}^s A, \Delta_2, \ldots, {?}^s A, \Delta_k$.
The derivation branches, and there are two possibilites: either all instances
of ${?}^s A$ involved in $(\MIX)$ go to one branch, or they split between branches.
%... TBC (branching vs. non-branching)

If they don't split, the transformation is again the same as for cut elimination:
$$
\infer[(\MIX)]{\vdash \Gamma, \Delta_1, \ldots, \Delta'_i, \Phi, E \Tensor F, \Delta''_i,
\ldots, \Delta_k}{\vdash \Gamma, {!}^s A^\bot & 
\infer[(\Tensor)]{\vdash {?}^s A, \Delta_1, \ldots, {?}^s A, \Delta'_i, \Phi, E \Tensor F, \Delta''_i,
\ldots, {?}^s A, \Delta_k}
{\vdash \Phi, E & \vdash {?}^s A, \Delta_1, \ldots, {?}^s A, \Delta'_i, F, \Delta''_i,
\ldots, {?}^s A, \Delta_k}}
$$
becomes
$$
\infer[(\Tensor)]{\vdash \Gamma, \Delta_1, \ldots, \Delta'_i, \Phi, E \Tensor F, \Delta''_i,
\ldots, \Delta_k}
{\vdash \Phi, E & \infer[(\MIX)]{\vdash \Gamma, \Delta_1, \ldots, \Delta'_i, F, \Delta''_i,
\ldots, \Delta_k}{\vdash \Gamma, {!}^s A^\bot &
 \vdash {?}^s A, \Delta_1, \ldots, {?}^s A, \Delta'_i, F, \Delta''_i,
\ldots, {?}^s A, \Delta_k}}
$$

The situation with splitting is more involved. In this case we recall that
$\vdash \Gamma, {!}^s A^\bot$ is obtained by application of ${!}$, therefore
$\Gamma = {?}^{s_1} C_1, \ldots, {?}^{s_n} C_n$, where $s_i \succeq s$ for all $i$.
Hence, $s_i \in \Cc$, and we can apply the non-local contraction rule for
formulae in $\Gamma$. Then we first apply $(\MIX)$ to both premises of $(\Tensor)$,
apply $(\Tensor)$ and arrive at a sequent with two occurrences of $\Gamma$, that
are merged by applying the $(\NCONTR)$  rule $n$ times. An example of such transformation
is presented below (the case where the leftmost ${?}^s A$ goes with $E$ instead of $F$
is symmetric):
$$
\infer[(\MIX)]{\vdash \Gamma, \Delta_1, \ldots, \Delta''_j, \Delta'_j, \ldots,
\Delta'_i, E \Tensor F, \Delta''_i, \ldots, \Delta_k}
{\vdash \Gamma, {!}^s A^\bot & 
\infer[(\Tensor)]{\vdash {?}^s A, \Delta_1, \ldots, {?}^s A, \Delta''_j, \Delta'_j, {?}^s A, \ldots,
{?}^s A, \Delta'_i, E \Tensor F, \Delta''_i, \ldots, {?}^s A, \Delta_k}
{\vdash \Delta'_j, {?}^s A, \ldots, {?}^s A, \Delta'_i, E & 
\vdash {?}^s A, \Delta_1, \ldots, {?}^s A, \Delta''_j, F, \Delta''_i, \ldots, {?}^s A, 
\Delta_k}}
$$
becomes
$$
\footnotesize
\infer[(\NCONTR)\text{ $n$ times}]{\vdash \Gamma, \Delta_1, \ldots, \Delta''_j, \Delta'_j,
\ldots,
\Delta'_i, E \Tensor F, \Delta''_i, \ldots, \Delta_k}
{\infer[(\Tensor)]{\vdash \Gamma, \Delta_1, \ldots, \Delta''_j, \Delta'_j, \Gamma, \ldots,
\Delta'_i, E \Tensor F, \Delta''_i, \ldots, \Delta_k}
{\infer[(\MIX)]{\vdash \Delta'_j, \Gamma, \ldots,
\Delta'_i, E}{\vdash \Gamma, {!}^s A^\bot & 
\vdash \Delta'_j, {?}^s A, \ldots, {?}^s A, \Delta'_i, E} &  
\infer[(\MIX)]{\vdash \Gamma, \Delta_1, \ldots, \Delta''_j, F, \Delta''_i, \ldots, \Delta_k}
{\vdash \Gamma, {!}^s A^\bot & \vdash {?}^s A, \Delta_1, \ldots, {?}^s A, \Delta''_j, F, \Delta''_i, \ldots, {?}^s A, 
\Delta_k}}}
$$
Both new applications of $(\MIX)$ have a smaller $\delta$ with the same $\kappa$, and
we proceed by induction.

All other non-principal cases (easy rules, $(\top)$, and $(\AND)$) are handled exactly as 
in the non-principal case for $(\CUT)$, only the notation becomes a bit longer.

{\em Case 4.} The last rule in both derivations is principal.
Then, again, the left premise is $({!})$ introducing
${!}^s A^\bot$, whence $\Gamma = 
{?}^{s_1} C_1, \ldots, {?}^{s_n} C_n$, and we consider subcases on which rule
is used on the right.

{\em Subcase 4.1.} The last rule is $({?})$. If this rule introduces the leftmost
instance of ${?}^s A$, the transformation is as follows (recall that
$k \geq 2$):
$$ 
\infer[(\MIX)]{\vdash \Gamma, \Delta_1, \Delta_2, \ldots, \Delta_k}
{\infer[(!)]{\vdash \Gamma, {!}^s A^\bot}{\vdash \Gamma, A^\bot} & 
\infer[(?)]{\vdash {?}^s A, \Delta_1, {?}^s A, \Delta_2, \ldots, {?}^s A, \Delta_k}
{\vdash A, \Delta_1, {?}^s A, \Delta_2, \ldots, {?}^s A, \Delta_k}}
$$
becomes
$$
\infer[(\NCONTR)\text{ several times}]
{\vdash \Gamma, \Delta_1, \Delta_2, \ldots, \Delta_k}
{\infer[(\CUT)]{\vdash \Gamma, \Delta_1, \Gamma, \Delta_2, \ldots, \Delta_k}
{\vdash \Gamma, A^\bot & 
\infer[(\MIX)]{\vdash A, \Delta_1, \Gamma, \Delta_2, \ldots, \Delta_k}
{\vdash \Gamma, {!}^s A^\bot & 
\vdash A, \Delta_1, {?}^s A, \Delta_2, \ldots, {?}^s A, \Delta_k}}}
$$
For $(\MIX)$, $\kappa$ is the same and $\delta$ gets decreased.
For $(\CUT)$, $\kappa$ gets decreased ($A$ is simpler than ${?}^s A$), and
we don't care for $\delta$ (which is uncontrolled). Thus, both cut and mix
are eliminable by induction. Finally, $s_i \in \Cc$ (since $s_i \succeq s$), whence
$(\NCONTR)$ can be applied to formulae from $\Gamma$.

If the $(?)$ rule introduces another instance of ${?}^{s} A$, the translation
is the same, but the second $\Gamma$ could appear not after $\Delta_1$, but after
some other $\Delta_i$.

{\em Subcase 4.2.} The last rule is $({!})$. The same as Subcase~4.2 of
cut elimination.

{\em Subcase 4.3.} The last rule is $(\NCONTR)$. Our mix rule was specifically
designed to subsume $(\NCONTR)$:
$$
\infer[(\MIX)]{\vdash \Gamma, \Delta_1, \ldots, \Delta_k}
{\vdash \Gamma, {!}^s A^\bot & 
\infer[(\NCONTR)]{\vdash {?}^s A, \Delta_1, {?}^s A, \Delta_2, \ldots,
{?}^s A, \Delta_i', \Delta_i'', \ldots, {?}^s A, \Delta_k}
{\vdash {?}^s A, \Delta_1, {?}^s A, \Delta_2, \ldots,
{?}^s A, \Delta_i', {?}^s A, \Delta_i'', \ldots, {?}^s A, \Delta_k}}
$$ transforms into $$
\infer[(\MIX)]{\vdash \Gamma, \Delta_1, \ldots, \Delta_k}
{\vdash \Gamma, {!}^s A^\bot & 
{\vdash {?}^s A, \Delta_1, {?}^s A, \Delta_2, \ldots,
{?}^s A, \Delta_i', {?}^s A, \Delta_i'', \ldots, {?}^s A, \Delta_k}}
$$
The $\delta$ parameter gets reduced with the same $\kappa$.

{\em Subcase 4.4.} The last rule is $(\EXC)$.
If this rule didn't move the leftmost instance of ${?}^s A$, then it gets
subsumed by $(\MIX)$ exactly as $(\NCONTR)$ in the previous subcase.
If the leftmost instance of ${?}^s A$ gets moved, then we recall that
$\Gamma = {?}^{s_1} C_1, \ldots, {?}^{s_n} C_n$, and $s_i \in \Ec$ for
all $i$ by the definition of subexponential signature, 
since $s_i \succeq s$ and $s \in \Ec$. This means we can apply the exchange
rule for $\Gamma$ as a whole. In this case we first apply $(\MIX)$ (with
the same $\kappa$ and a smaller $\delta$) to the sequent before $(\EXC)$ and
then move $\Gamma$ to the correct place by several applications of $(\EXC)$.

{\em Subcase 4.5.} The last rule is $(\WEAK)$. Again, if it introduced an
instance of ${?}^s A$ different from the leftmost one, it is subsumed by
$(\MIX)$. If the leftmost instance gets weakened, then we apply mix to
the second ${?}^s A$ (recall that $k \geq 2$, so we have another instance),
and then exchange $\Gamma$ with $\Delta_1$. This is allowed, since,
by our definitions, $\Wc \cap \Cc \subseteq \Ec$, and $s_i \succeq s \in \Wc \cap \Cc$
($s$ is in $\Wc$, since we used $(\WEAK)$ and in $\Cc$, since we used $(\MIX)$).
Again, $\kappa$ is the same and $\delta$ gets reduced.
%If this rule introduces the leftmost instance of ${?}^s A$, then 

%TBC
\end{proof}

\section{Embedding of $\LambekA$ into $\Linlog$ and Cut Elimination in
$\LambekA$}\label{S:embed}

In this section we define an extension to subexponentials of the standard embedding of Lambek formulae into cyclic linear logic. Lambek formula
$A$ translates into linear logic formula $\widehat{A}$.
%.......... FIRST:prove cut elimination for $\Linlog$, then prove everything else....
%Translation \& negation:
%$A \BS B$ translated as ``classic'' implication, $A^\bot \Par B$ .... intuition
%FIXME: introduce linear implications (``lollipop'') in SCLL !!!
%FIXME: awkward notation in linlog due to Girard, then LMSS
\begin{align*}
& \widehat{p}_i = p_i && \widehat{\U} = \U\\
& \widehat{A \cdot B} = \widehat{A} \Tensor \widehat{B} && \widehat{{!}^s A} = {!}^s \widehat{A}\\
& \widehat{A \BS B} = \widehat{A}^\bot \Par \widehat{B} && \widehat{A \wedge B} = \widehat{A} \AND \widehat{B}\\
&  \widehat{B \SL A} = \widehat{B} \Par \widehat{A}^\bot && \widehat{A \vee B} = \widehat{A} \OR \widehat{B}
\end{align*}

For convenience, we also recall the definition
of negation in $\Linlog$ and present the negative translations (negations of
translations) of Lambek formulae:
\begin{align*}
& \widehat{p}_i^\bot = \np_i && \widehat{\U}^\bot = \bot\\
& (\widehat{A \cdot B})^\bot = \widehat{B}^\bot \Par \widehat{A}^\bot && (\widehat{{!}^s A})^\bot = {?}^s \widehat{A}^\bot\\
& (\widehat{A \BS B})^\bot = \widehat{B}^\bot \Tensor \widehat{A} && (\widehat{A \wedge B})^\bot = \widehat{A}^\bot \OR \widehat{B}^\bot\\
&  (\widehat{B \SL A})^\bot = \widehat{A} \Tensor \widehat{B}^\bot && (\widehat{A \vee B})^\bot = \widehat{A}^\bot \AND \widehat{B}^\bot
\end{align*}

For $\Pi = A_1, \ldots, A_k$ let $\widehat{\Pi}^\bot$ be
$\widehat{A}_k^\bot, \ldots, \widehat{A}_1^\bot$ (for left-hand sides of
Lambek sequents, we need only the negative translation).

\begin{theorem}\label{Th:embed}
The following statements are equivalent:
\begin{enumerate}
\item the sequent $\Pi \to B$ is derivable in $\LambekA$;
\item the sequent $\Pi \to B$ is derivable in $\LambekA+(\CUT)$;
\item the sequent $\vdash \widehat{\Pi}^\bot, \widehat{B}$ is derivable in $\Linlog + (\CUT)$;
\item the sequent $\vdash \widehat{\Pi}^\bot, \widehat{B}$ is derivable in $\Linlog$.
\end{enumerate}
\end{theorem}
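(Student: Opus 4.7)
The plan is to close the cycle $(1) \Rightarrow (2) \Rightarrow (3) \Rightarrow (4) \Rightarrow (1)$. Here $(1) \Rightarrow (2)$ and $(4) \Rightarrow (3)$ are immediate, and $(3) \Leftrightarrow (4)$ is Theorem~\ref{Th:cutelim} applied to the translated sequent. What remains is the embedding $(2) \Rightarrow (3)$ and the conservativity $(4) \Rightarrow (1)$, of which the latter is by far the more delicate direction. As a side effect, the composition $(4) \Rightarrow (1) \Rightarrow (2)$ will yield cut-elimination for $\LambekA$ (Corollary~\ref{Cor:Lambekcutelim}).

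For $(2) \Rightarrow (3)$ I would induct on the height of an $\LambekA + (\CUT)$ derivation, simulating each Lambek rule by a short $\Linlog + (\CUT)$ derivation on the translated sequent. The multiplicative implication cases illustrate the pattern: for $(\BS\to)$, the translated premises $\vdash \widehat{\Pi}^\bot, \widehat{A}$ and $\vdash \widehat{\Gamma_2}^\bot, \widehat{B}^\bot, \widehat{\Gamma_1}^\bot, \widehat{C}$ are cyclically rotated so that $\widehat{A}$ is leftmost in the first and $\widehat{B}^\bot$ is rightmost in the second, and a single $(\Tensor)$ then produces $\widehat{B}^\bot \Tensor \widehat{A} = (\widehat{A \BS B})^\bot$ in the correct cyclic position. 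The rules $(\to\BS)$ and $(\to\SL)$ are handled by one $(\Par)$ each; the connectives $\cdot$, $\vee$, $\wedge$, $\U$ translate verbatim to $\Tensor$, $\OR$, $\AND$, $\U$; the subexponential rules translate verbatim, using that the upward-closure conditions on $\Wc$, $\Cc$, $\Ec$ agree in the two calculi; and Lambek $(\CUT)$ matches $\Linlog$-$(\CUT)$ directly.

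The main obstacle is $(4) \Rightarrow (1)$. By Theorem~\ref{Th:cutelim} the given derivation of $\vdash \widehat{\Pi}^\bot, \widehat{B}$ is cut-free and hence, by the subformula property, built entirely from subformulae of $\widehat{\Pi}$ and $\widehat{B}$. The key invariant to maintain is that every intermediate cyclic sequent contains exactly one \emph{positive} formula of the form $\widehat{D}$, with all other formulae of the form $\widehat{E}^\bot$; inspection of the translation table shows that every inference rule of $\Linlog$ preserves this invariant on its premises, so each sequent occurring in the derivation is itself the translation of a well-defined Lambek sequent, with the unique positive formula playing the role of the right-hand side. I would then induct on derivation height, simulating each $\Linlog$-rule by the Lambek rule whose translation it realises: $(\Tensor)$ corresponds to one of $(\to\cdot)$, $(\BS\to)$, $(\SL\to)$ depending on which translated connective is being decomposed and on which branch of the cyclic split the unique positive formula falls; $(\Par)$ corresponds to $(\to\BS)$ or $(\to\SL)$; the additive and unit rules match directly; non-local $(\NCONTR)$ and $(\EXC)$ correspond to $(\NCONTR_1)$ or $(\NCONTR_2)$ and to $(\EXC_1)$ or $(\EXC_2)$ according to the direction of movement; and $(\WEAK)$, $({?})$, $({!})$ match $(\WEAK)$, $({!}\to)$, and $(\to{!})$, with the side condition $s_j \succeq s$ carried over unchanged. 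The main source of tedium is bookkeeping of the unique positive formula across cyclic rotations, but no genuinely new difficulty arises.
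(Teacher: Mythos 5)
Your architecture coincides with the paper's: the same round-robin $1 \Rightarrow 2 \Rightarrow 3 \Rightarrow 4 \Rightarrow 1$, with $2 \Rightarrow 3$ by rule-by-rule simulation, $3 \Rightarrow 4$ by Theorem~\ref{Th:cutelim}, and $4 \Rightarrow 1$ via the invariant that every sequent in the cut-free $\Linlog$-derivation contains exactly one formula of the form $\widehat{D}$ and otherwise only formulae of the form $\widehat{E}^\bot$. The problem is your justification of that invariant. You assert that ``inspection of the translation table shows that every inference rule of $\Linlog$ preserves this invariant on its premises,'' but this is false for $(\Tensor)$ read from conclusion to premises. If the decomposed formula is $(\widehat{F \SL E})^\bot = \widehat{E} \Tensor \widehat{F}^\bot$ (or its mirror image), its positive component $\widehat{E}$ goes to one premise and its negative component to the other; nothing in the shape of the conclusion prevents $\widehat{E}$ from landing in the \emph{same} premise as the unique positive formula $\widehat{B}$ of the conclusion, in which case one premise has two positive formulae and the other has none. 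Your later remark that $(\Tensor)$ becomes $(\BS\to)$ or $(\SL\to)$ ``depending on which branch the unique positive formula falls'' silently assumes these bad splits do not occur, which is exactly the point that needs proof: without it the simulated Lambek rule has a premise with two succedent formulae and the simulation breaks down.

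The paper closes this gap with the $\natural$-counter of Pentus, extended to subexponentials and additives: $\natural(\widehat{A}) = 0$, $\natural(\widehat{A}^\bot) = 1$, and for every $\Linlog$-derivable sequent $\vdash A_1, \ldots, A_n$ whose formulae are all translations, $\natural(A_1) + \cdots + \natural(A_n) = n-1$ (Lemma~\ref{Lm:natural}, proved by induction on derivations). This immediately forces exactly one positive formula per derivable sequent (Lemma~\ref{Lm:unique}), and is then applied to the \emph{premises} of the offending $(\Tensor)$ instances to show that the two bad splits produce underivable sequents and hence cannot occur in a derivation. You need either this counting argument or a careful bottom-up induction (from axioms to the endsequent, using the subformula property to see that all intermediate formulae are positive or negative translations, plus a separate check that no formula is simultaneously of both forms) to legitimise the invariant. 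As stated, your top-down ``preservation'' claim is the one genuinely non-trivial step of the whole theorem, and it is the step your proposal leaves unproved.
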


This theorem yields both cut elimination for $\LambekA$ and embedding of $\LambekA$ into $\Linlog$.

\begin{corollary}\label{Cor:Lambekcutelim}
A sequent is derivable in $\LambekA + (\CUT)$ if and only if it is derivable in
$\LambekA$.
\end{corollary}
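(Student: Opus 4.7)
This corollary is an immediate consequence of Theorem~\ref{Th:embed}: the equivalence of statements (1) and (2) stated there is exactly the claim that $\LambekA + (\CUT)$ and $\LambekA$ derive the same sequents. The direction from cut-free to cut-with-cut is trivial; the substantive direction follows via the detour $(2) \Rightarrow (3) \Rightarrow (4) \Rightarrow (1)$: embed into $\Linlog + (\CUT)$, eliminate cuts there by Theorem~\ref{Th:cutelim}, and pull the resulting cut-free $\Linlog$ derivation back to $\LambekA$. The real content of the corollary therefore lies in Theorem~\ref{Th:embed}, for which I sketch a plan below.

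My plan is to establish the cycle $(1) \Rightarrow (2) \Rightarrow (3) \Rightarrow (4) \Rightarrow (1)$. The implications $(1) \Rightarrow (2)$ and $(4) \Rightarrow (3)$ are trivial (adding cut does not reduce derivability), and $(3) \Rightarrow (4)$ is exactly Theorem~\ref{Th:cutelim}. For $(2) \Rightarrow (3)$, I would induct on the derivation height in $\LambekA + (\CUT)$ and simulate each Lambek rule by a short $\Linlog + (\CUT)$ derivation under the translation: $(\to\BS)$ maps to a $(\Par)$ application building $\widehat{A}^\bot \Par \widehat{B} = \widehat{A \BS B}$; $(\BS\to)$ maps to a $(\Tensor)$ application with the appropriate context split; additives match $(\AND)$ and $(\OR)$; the unit laws match $(\U)$ and $(\bot)$; the subexponential rules map to their direct $\Linlog$ counterparts; and $(\CUT)$ translates to $(\CUT)$. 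The reversal and negation in $\widehat{\Pi}^\bot$ ensures that the cyclic order of the $\Linlog$ sequent encodes the linear order of the Lambek sequent faithfully.

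The hard step is $(4) \Rightarrow (1)$, conservativity of the embedding. The plan is to induct on a cut-free $\Linlog$ derivation of $\vdash \widehat{\Pi}^\bot, \widehat{B}$, maintaining a polarity-preservation invariant: every sequent encountered is, up to cyclic rotation, of the form $\vdash \widehat{\Pi'}^\bot, \widehat{B'}$ for some Lambek sequent $\Pi' \to B'$, and each rule step of the $\Linlog$ derivation corresponds to a unique Lambek rule producing $\Pi' \to B'$. Cut-freeness yields the subformula property, so every formula appearing is a positive or negative translation of a Lambek subformula of $\Pi$ or $B$; hence each sequent contains exactly one positive-translation formula, playing the role of the Lambek succedent. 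A case analysis on the $\Linlog$ rules verifies the correspondence: $(\Par)$ can only introduce $\widehat{A \BS B}$ or $\widehat{B \SL A}$, matching $(\to\BS)$ or $(\to\SL)$; $(\Tensor)$ introduces either $\widehat{A \cdot B}$, matching $(\to\cdot)$, or a negated implication, matching $(\BS\to)$ or $(\SL\to)$, and here the context splitting must place the unique positive formula in the correct branch, mirroring the Lambek context split exactly; $(\AND)$, $(\OR)$, $(\U)$, $(\bot)$ match additives and units; the subexponential and structural rules $(!)$, $(?)$, $(\WEAK)$, $(\NCONTR)$, $(\EXC)$ map directly; and $(\top)$ cannot occur, since $\top$ is not in the image of the translation. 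The main delicacy will be tracking the cyclic ordering in each case to recover the correct linear positions demanded by the Lambek rule.
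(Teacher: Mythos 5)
Your derivation of the corollary from Theorem~\ref{Th:embed} is exactly the paper's: the corollary is the equivalence $1 \Leftrightarrow 2$ of that theorem, obtained by the round-robin $1 \Rightarrow 2 \Rightarrow 3 \Rightarrow 4 \Rightarrow 1$, and your treatment of $1 \Rightarrow 2$, $2 \Rightarrow 3$, and $3 \Rightarrow 4$ matches the paper. The problem is in your sketch of $4 \Rightarrow 1$, which is where all the real work lives. You write that cut-freeness gives the subformula property, so every formula is of the form $\widehat{C}$ or $\widehat{C}^\bot$, and ``hence each sequent contains exactly one positive-translation formula.'' That ``hence'' is a non sequitur: the subformula property tells you what the formulae look like individually, but says nothing about how many of them in a given sequent are positive translations. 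A sequent such as $\vdash \widehat{\Gamma}_1^\bot, \widehat{E}, \widehat{\Gamma}_2^\bot, \widehat{B}$ consists entirely of translations of subformulae yet has two positive ones, and nothing you have said rules it out of the derivation.

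This matters precisely in the $(\Tensor)$ case you flag as delicate. When $(\Tensor)$ introduces $(\widehat{F \SL E})^\bot = \widehat{E} \Tensor \widehat{F}^\bot$, the derivation you are handed may in principle have split the context so that the new positive subformula $\widehat{E}$ lands in the \emph{same} premise as the succedent $\widehat{B}$, leaving the other premise with no positive formula at all; your induction hypothesis then does not apply to either premise. You cannot choose the split---you must prove the bad splits are impossible, i.e., that a sequent of translations with zero or two positive formulae is underivable in $\Linlog$. The paper does this with a genuine counting invariant, the $\natural$-counter (Lemma~\ref{Lm:natural}): $\natural(\widehat{A}) = 0$, $\natural(\widehat{A}^\bot) = 1$, and every derivable sequent $\vdash A_1, \ldots, A_n$ of translations satisfies $\natural(A_1) + \cdots + \natural(A_n) = n - 1$, which is verified rule by rule and forces exactly one positive formula per sequent (Lemma~\ref{Lm:unique}). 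Some such quantitative argument is unavoidable here; supplying it would close the gap, and the rest of your case analysis then goes through as you describe.
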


\begin{corollary}\label{Cor:embed}
The sequent $\Pi \to B$ is derivable in $\LambekA$ if and only if the sequent
$\vdash \widehat{\Pi}^\bot, \widehat{B}$ is derivable in $\Linlog$.
\end{corollary}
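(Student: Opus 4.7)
The plan is to close the cycle $1 \Rightarrow 2 \Rightarrow 3 \Rightarrow 4 \Rightarrow 1$ in Theorem~\ref{Th:embed}, from which the corollary is immediate by taking $1 \Leftrightarrow 4$. The implication $1 \Rightarrow 2$ is trivial since $\LambekA$ is a subsystem of $\LambekA+(\CUT)$, and $3 \Rightarrow 4$ is precisely Theorem~\ref{Th:cutelim} established in the previous section. What remains are the two translational directions $2 \Rightarrow 3$ and $4 \Rightarrow 1$.

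The direction $2 \Rightarrow 3$ I would handle by induction on the $\LambekA+(\CUT)$ derivation of $\Pi \to B$, simulating each Lambek rule by a single $\Linlog$ rule applied to the translated sequent. The translation is engineered so that this simulation is transparent: since $(\widehat{A \BS B})^\bot = \widehat{B}^\bot \Tensor \widehat{A}$, the rule $(\BS\to)$ becomes $(\Tensor)$; since $\widehat{A \BS B} = \widehat{A}^\bot \Par \widehat{B}$, the rule $(\to\BS)$ becomes $(\Par)$; and symmetrically for $\SL$, while the product, unit, additive, subexponential, and cut rules translate one-to-one. The only subtlety is positional: because $\Linlog$-sequents are cyclically but not freely ordered, at several points one must use the transformed versions of $(\Tensor)$ and $(\Par)$ displayed after the rules of $\Linlog$, so as to place the decomposed formula at the position dictated by the translation.

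The main obstacle is the converse $4 \Rightarrow 1$. Here I would induct on the height of a cut-free $\Linlog$ derivation of $\vdash \widehat{\Pi}^\bot, \widehat{B}$, with a case analysis on the last rule applied. The whole argument rests on the following polarity-and-shape invariant: by the subformula property, every sequent appearing in the derivation has the form $\vdash \widehat{\Sigma}^\bot, \widehat{C}$ for some Lambek sequent $\Sigma \to C$, meaning exactly one occurrence is a positive translation and the rest are negative translations, arranged in the cyclic order induced by $\Sigma$. This invariant is enforced by the fact that the outermost connective of $\widehat{A}^\bot$ uniquely determines the outermost Lambek connective of $A$: a negative $\Tensor$ must come from a division, a negative $\Par$ from a product, and so on. Consequently each $\Linlog$ rule fires at a position where it matches exactly one $\LambekA$ rule---for instance, $(\Par)$ on the positive $\widehat{A}^\bot \Par \widehat{B}$ corresponds to $(\to\BS)$, and $(\Tensor)$ on the negative $\widehat{B}^\bot \Tensor \widehat{A}$ corresponds to $(\BS\to)$, with the $\Linlog$ context-split mirroring the $\LambekA$ context-split. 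The subexponential and structural rules translate back verbatim, using that the non-local forms $(\NCONTR)$ and $(\EXC)$ in $\Linlog$ correspond to $(\NCONTR_{1,2})$ and $(\EXC_{1,2})$ in $\LambekA$.

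I expect the chief technical hurdle to be the positional bookkeeping in the $(\Tensor)$ case of the $4 \Rightarrow 1$ induction: one must verify that, when a negative-$\Tensor$ formula is decomposed, the two halves of the cyclic sequent that get split off really do correspond to two contiguous sub-antecedents of the target Lambek sequent, with the unique positive formula ending up on the correct side. Once this matching is checked, the inductive step is mechanical, and the desired cut-free $\LambekA$-derivation is assembled piece by piece. Corollary~\ref{Cor:Lambekcutelim} is then the composition $2 \Rightarrow 3 \Rightarrow 4 \Rightarrow 1$, and the present Corollary~\ref{Cor:embed} is the equivalence $1 \Leftrightarrow 4$.
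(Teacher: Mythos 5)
Your overall architecture matches the paper exactly: the corollary is the equivalence $1 \Leftrightarrow 4$ of Theorem~\ref{Th:embed}, obtained by closing the cycle $1 \Rightarrow 2 \Rightarrow 3 \Rightarrow 4 \Rightarrow 1$, with $1 \Rightarrow 2$ trivial, $2 \Rightarrow 3$ by induction on the $\LambekA+(\CUT)$ derivation, and $3 \Rightarrow 4$ from Theorem~\ref{Th:cutelim}. The gap is in your treatment of $4 \Rightarrow 1$, and it is precisely at the point you flag as ``the chief technical hurdle'' but do not actually resolve. You assert a ``polarity-and-shape invariant'' --- that every sequent in the cut-free $\Linlog$ derivation contains exactly one formula of the form $\widehat{C}$ and the rest of the form $\widehat{C}^\bot$ --- and claim it is ``enforced by'' the subformula property together with the fact that outermost connectives of negative translations determine the Lambek connective. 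Neither of these gives you the invariant. The subformula property only tells you that every formula occurring in the derivation is of the form $\widehat{C}$ or $\widehat{C}^\bot$ (note that $\widehat{E}$, a \emph{positive} translation, is a subformula of the negative translation $(\widehat{F \SL E})^\bot = \widehat{E} \Tensor \widehat{F}^\bot$); it says nothing about how many positive translations a given sequent contains. Concretely, when $(\Tensor)$ decomposes $\widehat{E} \Tensor \widehat{F}^\bot$, the cyclic context split could a priori send both $\widehat{E}$ and the designated succedent formula $\widehat{B}$ into the same premise, producing a derivable-looking sequent $\vdash \widehat{\Gamma}_1^\bot, \widehat{E}, \widehat{\Gamma}_2^\bot, \widehat{B}$ with two positive formulae and no corresponding Lambek sequent. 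Your induction hypothesis does not apply to such a premise, and nothing in your argument shows the case cannot arise; the induction simply gets stuck there.

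What is missing is a proof that a sequent with two (or zero) positive translations is \emph{not derivable} in $\Linlog$, so that the bad splits can be discarded. This is Lemma~\ref{Lm:unique} in the paper, and it is not a bookkeeping observation: it is proved by a global counting argument, extending Pentus's $\natural$-counter to the additive and subexponential connectives ($\natural(p)=0$, $\natural(\np)=1$, $\natural(A\Par B)=\natural(A)+\natural(B)-1$, $\natural(A\Tensor B)=\natural(A)+\natural(B)$, with $\natural$ invariant under $\AND$, $\OR$, ${!}^s$, ${?}^s$) and showing that every derivable sequent $\vdash A_1,\ldots,A_n$ built from translations satisfies $\natural(A_1)+\cdots+\natural(A_n)=n-1$, which forces exactly one formula with $\natural=0$, i.e.\ exactly one positive translation. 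You need to supply this lemma (or an equivalent derivability-based invariant, verified against \emph{every} rule of $\Linlog$, including $(\WEAK)$, $(\NCONTR)$ and $(\EXC)$) before the $(\Tensor)$ case of your induction goes through. With that lemma in hand, the rest of your case analysis is indeed mechanical and coincides with the paper's proof.
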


%Notice strange!   
We prove Theorem~\ref{Th:embed} by establishing round-robin implications:
\mbox{$1 \Rightarrow 2 \Rightarrow 3 \Rightarrow 4 \Rightarrow 1$.}
The last implication, $4 \Rightarrow 1$, is a bit surprising, since the Lambek calculus
is in a sense ``intuitionistic,'' and CLL is ``classic'' (cf.~\citet{Chaudhuri}).
%However,
%this embedding becomes possible, since $\Par$ and negation in are not available
%in the Lambek calculus,
%thus $A \Par A^\bot$ is not representable; with $\vee$, $A \vee A^\bot$ is invalid due to %linearity.
%.... FIXME this makes the $4 \Rightarrow 1$ implication in Theorem~\ref{Th:embed}
%non-trivial.
However, it becomes possible due to the restricted language used in
the Lambek calculus: it includes neither multiplicative disjunction ($\Par$),
nor negation, % (which makes it impossible to encode {\em tertium non datur,}
%$A \Par A^\bot$), 
nor existential subexponentials (${?}^s$), 
nor additive constants ($\Z$ and $\top$).

In the commutative case, as shown by~\citet{Schellinx}, these are
exactly the restrictions under which intuitionistic linear logic is a
conservative fragment of classical linear logic. In our non-commutative case,
the situation is the same: $\LambekA$ in its restricted language gets conservatively
embedded into $\Linlog$, but 
%our restriction on the language is necessary and
%sufficient for the mapping of $\LambekA$ into $\Linlog$ to be conservative.
extending the language and including some of the forbidden connectives leads to
failure of the conservativity claim.

Multiplicative disjunction and negation allow encoding
{\em tertium non datur,} $A \Par A^\bot$, which is intuitionistically invalid.

In the implication-only language, there is still a principle that is valid
classically, but not intuitionistically, called {\em Peirce's law}~\citep{Peirce}:
 $((X \Rightarrow Y) 
\Rightarrow X) \Rightarrow X$. Encoding Peirce's law in substructural logic
requires explicitly allowing contraction for the rightmost $X$ and weakening for $Y$,
like this: $(x \BS {?}^w y) \BS x \to {?}^c x$, where $w \in \Wc$ and $c \in \Cc$.
This would be a counter-example for the $4 \Rightarrow 1$ implication;
fortunately, formulae of the form ${?}^s A$ are outside the language of
$\LambekA$. The translation of this substructural form of Peirce's law into
cyclic linear logic, $\vdash \bar{x} \Tensor (\bar{x} \Par {?}^w y), {?}^c x$,
is derivable in $\Linlog$ with an appropriate substructural signature $\Sigma$:
$$
\infer[(\NCONTR)]{\vdash \bar{x} \Tensor (\bar{x} \Par {?}^w y), {?}^c x}
{\infer[(\Tensor)]{\vdash {?}^c x, \bar{x} \Tensor (\bar{x} \Par {?}^w y), {?}^c x}
{\infer[(?)]{\vdash {?}^c x, \bar{x}}{\infer[(\AX)]{\vdash x, \bar{x}}{}} & 
\infer[(\Par)]{\vdash \bar{x} \Par {?}^w y, {?}^c x}
{\infer[(\WEAK)]{\vdash \bar{x}, {?}^w y, {?}^c x}
{\infer[(?)]{\vdash \bar{x}, {?}^c x}{\infer[(\AX)]{\vdash \bar{x},x}{}}}}}}
$$

Finally, if one extends the Lambek calculus
with the $\Z$ constant governed by the following left rule
$$
\infer[(\Z\to)]{\Gamma_1, \Z, \Gamma_2 \to C}{}
$$
and no right rule~\citep{LambekZero}, the $4 \Rightarrow 1$ implication (where
$\widehat{\Z} = \Z$) also becomes false. This is established by a non-commutative
version of the counter-example by~\citet{Schellinx}:
\iffalse
%. Such an extension of $\LambekA$ FIXME: denotation
%for the ``pure'' Lambek calculus!!  ... still faithfully
%maps into $\Linlog$: if $\Pi \to B$ is derivable in it, then $\vdash \widehat{\Pi}^\bot,
%\widehat{B}$ is derivable in $\Linlog$, where $\widehat{\Z} = \Z$. This
%embedding, however, is no more conservative (in other words, the
%$4 \Rightarrow 1$ implication in Theorem~\ref{Th:embed} doesn't hold for
%this extension).

FIXME: this is example from Wiki:
... The counter-example is as follows: $$(q \SL p) \BS \Z \to p \cdot (\Z \BS r)$$ 
This sequent is not
derivable in (FIXME: denot. for Lambek with 0; FIXME: cut elim sketch) ---
cut-free proof search!
On the other hand, its translation into linear logic,
$\vdash \top \Tensor (q \Par \np), p \Tensor (\top \Par r)$, is derivable in
$\Linlog$:
$$
\infer[(\Tensor)]{\vdash \top \Tensor (q \Par \np), p \Tensor (\top \Par r)}
{\infer[(\top)]{\vdash \top}{} & 
\infer[(\Par)]{\vdash q \Par \np, p \Tensor (\top \Par r)}
{\infer[(\Tensor)]{\vdash q, \np, p \Tensor (\top \Par r)}
{\infer[(\AX)]{\vdash \np, p}{} & 
\infer[(\Par)]{\vdash q, \top \Par r}
{\vdash q, \top, r}}}}
$$

Schellinx' example:\fi
$$
(r \SL (\Z \BS q)) \SL p, (s \SL p) \BS \Z \to r.
$$
Since the Lambek calculus with $\Z$ still has the cut elimination property
(as we don't need subexponentials and additives, one can prove it by 
simple induction, as in~\citep{Lambek58}), one can perform exhaustive
proof search and find out that this sequent is not derivable.
%This sequent is not derivable...FIXME
On the other hand, its translation into cyclic linear logic,
$\vdash \top \Tensor (s \Par \np), p \Tensor (\top \Par q) \Tensor \bar{r}, r$,
is derivable in $\Linlog$:
$$
\infer[(\Tensor)]
{\vdash \top \Tensor (s \Par \np), p \Tensor ((\top \Par q) \Tensor \bar{r}), r}
{\infer[(\top)]{\vdash \top}{} &
\infer[(\Par)]{\vdash s \Par \np, p \Tensor ((\top \Par q) \Tensor \bar{r}), r}
{\infer[(\Tensor)]{\vdash s, \np, p \Tensor ((\top \Par q) \Tensor \bar{r}), r}
{\infer[(\AX)]{\vdash \np, p}{} & 
\infer[(\Tensor)]{\vdash s, (\top \Par q) \Tensor \bar{r}, r}
{\infer[(\Par)]{\vdash s, \top \Par q}
{\infer[(\top)]{\vdash s, \top, q}{}} & \infer[(\AX)]{\vdash \bar{r}, r}{}}
}}} 
$$

\iffalse
\begin{proof}[Proof of Theorem~\ref{Th:embed}.]

FIXME: move after Lemma (say that $4 \Rightarrow 1$ is non-trivial, and we need lemma)

\fbox{$1 \Rightarrow 2$} triv

\fbox{$2 \Rightarrow 3$} easy induction on deriv

\fbox{$3 \Rightarrow 4$} by Theorem~\ref{Th:cutelim}

the only non-trivial is
\fbox{$4 \Rightarrow 1$}

\fi

%The only non-trivial implication is $4 \Rightarrow 1$.

Our proof of the $4 \Rightarrow 1$ implication for the restricted language
is essentially based on the ideas of~\citet{Schellinx}. We show that
if a sequent of the form $\vdash \widehat{\Gamma}^\bot, \widehat{B}$ is derivable
in $\Linlog$, then in each sequent in the derivation there is exactly one formula
of the form $\widehat{C}$, and all others are of the form $\widehat{C}^\bot$
(see Lemma~\ref{Lm:unique} below).
This means that all sequents in the $\Linlog$-derivation are actually translations
of Lambek sequents, and the derivation as a whole can be mapped onto a derivation
in $\LambekA$.

This technical lemma is proved using an extension of the $\natural$-counter 
by~\citet{PentusFmonov} to formulae of $\Linlog$ without $\Z$ and $\top$,
but possibly using additive and subexponential connectives (Pentus considers
only the multiplicative fragment of cyclic linear logic).
%, following Pentus~\cite{PentusFmonov} and
% extending it to the additive and subexponential connectives. 
 
The $\natural$-counter is defined recursively:
 \begin{align*}
& \natural(p) = 0		&& \natural(A \Par B) = \natural(A) + \natural(B) - 1\\
& \natural(\np) = 1		&& \natural(A \Tensor B) = \natural(A) + \natural(B)\\
& \natural(\U) = 0		&& \natural(A \OR B) = \natural(A \AND B) = \natural(A)\\
& \natural(\bot) = 1	&&  \natural({?}^{s} A) = \natural({!}^s A) = \natural(A)
\end{align*}

%FIXME: $\top$ and $\Z$ !! (don't actually need)

If $\Gamma = E_1, \ldots, E_k$, then let $\natural(\Gamma) = \natural(E_1) + \ldots + \natural(E_k)$.

Then we establish the following properties of the $\natural$-counter:

\begin{lemma}\label{Lm:natural}
\begin{enumerate}
\item\label{Stm:posnatural} $\natural(\widehat{A}) = 0$;
\item\label{Stm:negnatural} $\natural(\widehat{A}^\bot) = 1$;
\item\label{Stm:ORnatural} if $A \OR B$ is of the form $\widehat{C}$ or
$\widehat{C}^\bot$, then $\natural(A) = \natural(B) = \natural(A \OR B)$;
%\item if $A \AND B$ is of the form $\widehat{C}$ or
%$\widehat{C}^\bot$, then $\natural(A) = \natural(B) = \natural(A \AND B)$;
\item\label{Stm:negsum} if each $A_i$ for $i = 1, \ldots, n$ is of the form $\widehat{C}$ or
$\widehat{C}^\bot$ and the sequent $\vdash A_1, \ldots, A_n$ is derivable in $\Linlog$, then
$\natural(A_1) + \ldots + \natural(A_n) = n - 1$;
%\item\label{Stm:unique} 
\end{enumerate}
\end{lemma}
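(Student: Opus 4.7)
The plan is to prove the four statements in sequence, the first three essentially by inspection and the fourth by induction on cut-free $\Linlog$-derivations.

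I would prove statements (1) and (2) together by simultaneous structural induction on the Lambek formula $A$. The base cases $A = p_i$ and $A = \U$ are immediate from the definitions. For each binary connective one simply unfolds the translation and the $\natural$-counter: for instance, for $A = A_1 \BS A_2$ one gets $\natural(\widehat{A_1}^\bot \Par \widehat{A_2}) = 1 + 0 - 1 = 0$, and $\natural(\widehat{A_2}^\bot \Tensor \widehat{A_1}) = 1 + 0 = 1$ for the negative side; the remaining multiplicative, additive, and subexponential cases are completely analogous. Statement (3) then follows by inspecting the translation tables: the connective $\OR$ appears at the top of a translation only as $\widehat{A_1 \vee A_2} = \widehat{A_1} \OR \widehat{A_2}$ and at the top of a negated translation only as $(\widehat{A_1 \wedge A_2})^\bot = \widehat{A_1}^\bot \OR \widehat{A_2}^\bot$, so in either case both disjuncts have the same $\natural$-value (both $0$ or both $1$) by (1) or (2), which by definition coincides with $\natural(A \OR B) = \natural(A)$.

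Statement (4) is the main claim; I would prove it by induction on a cut-free derivation of $\vdash A_1, \ldots, A_n$, which exists by Theorem~\ref{Th:cutelim}. The key preliminary observation is that the property ``every formula in the sequent is of the form $\widehat{C}$ or $\widehat{C}^\bot$'' propagates upward through every rule of $\Linlog$: the outermost connective of each principal formula determines unambiguously from which Lambek constructor it arose, and the immediate subformulae are again translations or their negations. With that in hand, I would do a case analysis on the last rule to verify the identity $\sum_i \natural(A_i) = n - 1$: axioms give $0 + 1 = 2 - 1$; for $(\Par)$, $(\Tensor)$, $(\bot)$, $(\U)$, $(?)$, $(!)$, $(\AND)$ and $(\EXC)$ the identity is preserved by a direct calculation from the definition of $\natural$; for $(\WEAK)$ and $(\NCONTR)$ one invokes (2) to know that $\natural({?}^s B) = 1$ since ${?}^s B$ must be a negated translation; and for $(\OR)$ one invokes (3) to pass between the two summands. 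The $(\top)$ rule never arises because $\top$ lies outside the image of the translation, and $\Z$ is excluded by hypothesis.

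The main obstacle, beyond bookkeeping, will be the $(\OR)$ rule, where the asymmetric definition $\natural(A_1 \OR A_2) = \natural(A_1)$ conflicts with the fact that the rule may introduce either disjunct; statement (3) is engineered precisely to reconcile this, and without the restriction to translation-shaped formulae the identity $\sum \natural = n - 1$ would in fact fail in general. A secondary subtlety is checking that the upward-propagation observation really works for every rule, including the subexponential ones, which in turn relies on the fact that ${?}^s$ occurs inside a translation only via negation of a ${!}^s$-formula.
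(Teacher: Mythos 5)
Your proposal is correct and follows essentially the same route as the paper: structural induction on $A$ for statements (1)--(3), and induction on the cut-free derivation with a case analysis on the last rule for statement (4), using (2) for $(\WEAK)$/$(\NCONTR)$ and (3) for $(\OR)$. Your explicit remark that the shape ``$\widehat{C}$ or $\widehat{C}^\bot$'' propagates upward through every rule is a point the paper leaves implicit, but it does not change the argument.
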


\begin{proof}
1. Induction on the structure of $A$:
\begin{align*}
& \natural(\widehat{p}) = \natural(p) = 0\\
& \natural(\widehat{\U}) = \natural(\U) = 0\\
& \natural(\widehat{A \cdot B}) = \natural(\widehat{A} \Tensor \widehat{B}) = \natural(\widehat{A}) + \natural(\widehat{B}) = 0 + 0 = 0\\
& \natural(\widehat{A \BS B}) = \natural(\widehat{A}^\bot \Par \widehat{B}) = \natural(\widehat{A}^\bot) + \natural(\widehat{B}) - 1 = 1 + 0 - 1 = 0\\
& \natural(\widehat{B \SL A}) = \natural(\widehat{B} \Par \widehat{A}^\bot) = \natural(\widehat{B}) + \natural(\widehat{A}^\bot) - 1 = 0 + 1 - 1 = 0\\
& \natural(\widehat{A \vee B}) = \natural(\widehat{A} \OR \widehat{B}) = \natural(\widehat{A}) = 0\\
& \natural(\widehat{A \wedge B}) = \natural(\widehat{A} \AND \widehat{B}) = \natural(\widehat{A}) = 0\\
& \natural(\widehat{{!}^s A}) = \natural({!}^s \widehat{A}) = \natural(\widehat{A}) = 0
\end{align*}

2. Induction on the structure of $A$:
\begin{align*}
& \natural(\widehat{p}^\bot) = \natural(\np) = 1\\
& \natural(\widehat{\U}^\bot) = \natural(\bot) = 1\\
& \natural((\widehat{A \cdot B})^\bot) = \natural(\widehat{B}^\bot \Par \widehat{A}^\bot) = \natural(\widehat{B}^\bot) + \natural(\widehat{A}^\bot) - 1 = 
1 + 1 - 1 = 1\\
& \natural((\widehat{A \BS B})^\bot) = \natural(\widehat{B}^\bot \Tensor \widehat{A}) = \natural(\widehat{B}^\bot) + \natural(\widehat{A}) = 1 + 0 = 1\\
& \natural((\widehat{B \SL A})^\bot) = \natural(\widehat{A} \Tensor \widehat{B}^\bot) = \natural(\widehat{A}) + \natural(\widehat{B}^\bot) = 0 + 1 = 1\\
& \natural((\widehat{A \vee B})^\bot) = \natural(\widehat{A}^\bot \AND \widehat{B}^\bot) = \natural(\widehat{A}^\bot) = 1\\
& \natural((\widehat{A \wedge B})^\bot) = \natural(\widehat{A}^\bot \OR \widehat{B}^\bot) = \natural(\widehat{A}^\bot) = 1\\
& \natural((\widehat{{!}^s A})^\bot) = \natural({?}^s \, \widehat{A}^\bot) = \natural(\widehat{A}^\bot) = 1
\end{align*}

%FIXME: in 3 and 4 don't need $\AND$ case

3. If $A \OR B = \widehat{C}$, then $C = C_1 \vee C_2$, $A = \widehat{C}_1$, $B = \widehat{C}_2$, and
$\natural(A) = \natural(B) = 0$.

If $A \OR B = \widehat{C}^\bot$, then $C = C_1 \wedge C_2$, $A = \widehat{C}_1^\bot$, $B = \widehat{C}_2^\bot$, and
$\natural(A) = \natural(B) = 1$.

%4. If $A \AND B = \widehat{C}$, then $C = C_1 \wedge C_2$, $A = \widehat{C}_1$, $B = %\widehat{C}_2$, and
%$\natural(A) = \natural(B) = 0$.

%If $A \AND B = \widehat{C}^\bot$, then $C = C_1 \vee C_2$, $A = \widehat{C}_1^\bot$, $B = %\widehat{C}_2^\bot$, and
%$\natural(A) = \natural(B) = 1$.

4. Induction on the derivation in $\Linlog$:

{\it Case 1,} $(\AX)$: $\natural(\widehat{A}) + \natural(\widehat{A}^\bot) = 0 + 1 = 1 = 2 - 1$, $n = 2$.

{\it Case 2,} $(\Tensor)$. Let $\Gamma$ include $n_1$ formulae and $\Delta$ include $n_2$ formulae. Then,
by induction hypothesis,
$\natural(\Gamma) + \natural(A) = n_1 + 1 - 1 = n_1$ and $\natural(B) + \natural(\Delta) = n_2 + 1 - 1 = n_2$.
Therefore, $\natural(\Gamma) + \natural(A \Tensor B) + \natural(\Delta) = \natural(\Gamma) + \natural(A) + \natural(B) + \natural(\Delta) = 
n_1 + n_2 = (n_1 + n_2 + 1) - 1 = n-1$.

{\it Case 3,} $(\Par)$. Let $\Gamma$ include $n_1$ formulae. Then, by induction hypothesis,
$\natural(A) + \natural(B) + \natural(\Gamma) = n_1 + 2 - 1 = n_1 + 1$. Therefore,
$\natural(A \Par B) + \natural(\Gamma) = \natural(A) + \natural(B) - 1 + \natural(\Gamma) = (n_1 + 1) - 1 = n-1$.

{\it Case 4,} $(\AND)$. Since $\natural(A_1 \AND A_2) = \natural(A_1)$, we have $\natural(A_1 \AND A_2) + \natural(\Gamma) = 
\natural(A_1) + \natural(\Gamma)$, which is $n-1$ by induction hypothesis.

{\it Case 5,} $(\OR)$. By Statement~\ref{Stm:ORnatural} of this Lemma, since $A_1 \AND A_2$ is of the form
$\widehat{C}$ or $\widehat{C}^\bot$, we have $\natural(A_1 \AND A_2) = \natural(A_i)$ for both $i = 1$ and $i = 2$.
Thus, $\natural(A_1 \OR A_2) + \natural(\Gamma) = \natural(A_i) + \natural(\Gamma)$, which is $n-1$ by induction hypothesis.

{\it Case 6,} $(\U)$: $\natural(\U) = 0 = 1 - 1$, $n = 1$.

{\it Case 7,} $(\bot)$. In this case $\Gamma$ contains $n-1$ formulae, by induction hypothesis
$\natural(\Gamma) = (n-1) - 1$, and $\natural(\bot) + \natural(\Gamma) = 1 + (n-1) - 1 = n - 1$.

{\it Case 8,} $(\top)$. Impossible, since $\top$ is neither of the form $\widehat{C}$, nor of the form
$\widehat{C}^\bot$.

{\it Case 9,} $(!)$. Adding ${!}^s$ doesn't alter the $\natural$-counter.

{\it Case 10,} $(?)$. Adding ${?}^s$ doesn't alter the $\natural$-counter.

{\it Case 11,} $(\WEAK)$. The new formula ${?}^s A$ couldn't be of the form $\widehat{C}$, therefore it is of the form
$\widehat{C}^\bot$. Hence, by Statement~\ref{Stm:negnatural} of this Lemma, $\natural({?}^s A) = 1$, and
$\natural({?}^s A) + \natural(\Gamma) = 1 + (n-1) - 1 = n - 1$.

{\it Case 12,} $(\NCONTR)$. Again, $\natural({?}^s A) = 1$, and
$\natural({?}^s A) + \natural(\Gamma) + \natural(\Delta) = \natural({?}^s A) + \natural(\Gamma) + \natural({?}^s A) + \natural(\Delta) - 1 = 
((n+1) - 1) - 1) = n-1$.

{\it Case 13,} $(\EXC)$. In this case $\natural({?}^s A) + \natural(\Gamma) + \natural(\Delta) = 
\natural(\Gamma) + \natural({?}^s A) + \natural(\Delta) = n - 1$ by induction hypothesis.

%FIXME NB! the order for $\natural$ doesn't matter, therefore use the cyclic form of $\Linlog$

\end{proof}

\begin{lemma}\label{Lm:unique}
If each $A_i$ for $i = 1, \ldots, n$ is of the form $\widehat{C}$ or
$\widehat{C}^\bot$ and the sequent $\vdash A_1, \ldots, A_n$ is derivable in $\Linlog$, then
exactly one of $A_1, \ldots, A_n$ is of the form $\widehat{C}$, and all other are
of the form $\widehat{C}^\bot$.
\end{lemma}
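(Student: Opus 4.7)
The plan is to read off the conclusion directly from the $\natural$-counter machinery already established in Lemma~\ref{Lm:natural}. Let $k$ denote the number of indices $i$ such that $A_i$ is of the form $\widehat{C}$ (call these the \emph{positive} formulae), and observe that the remaining $n-k$ formulae must then be of the form $\widehat{C}^\bot$ (the \emph{negative} ones), since the hypothesis forces every $A_i$ into one of these two shapes.

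By Statement~\ref{Stm:posnatural} of Lemma~\ref{Lm:natural}, each positive $A_i$ contributes $\natural(A_i) = 0$ to the total, and by Statement~\ref{Stm:negnatural} each negative $A_i$ contributes $\natural(A_i) = 1$. Hence
\[
\natural(A_1) + \cdots + \natural(A_n) \;=\; 0 \cdot k + 1 \cdot (n-k) \;=\; n - k.
\]
On the other hand, since $\vdash A_1, \ldots, A_n$ is derivable in $\Linlog$, Statement~\ref{Stm:negsum} of the same lemma tells us that this sum equals $n - 1$.

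Equating the two gives $n - k = n - 1$, so $k = 1$. In other words, exactly one of $A_1, \ldots, A_n$ is a positive translation $\widehat{C}$, and all others are negative translations $\widehat{C}^\bot$. There is no real obstacle here: the entire weight of the argument has already been absorbed into the properties of $\natural$, in particular into the derivation-by-induction proof of Statement~\ref{Stm:negsum}, which is where the interaction of the counter with each inference rule of $\Linlog$ (including the non-commutative subexponential rules) was verified. The present lemma is then simply a counting step on top of that.
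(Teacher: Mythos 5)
Your proof is correct and follows essentially the same argument as the paper's: count the positive translations as $k$, compute $\natural(A_1)+\cdots+\natural(A_n)$ two ways using Statements~\ref{Stm:posnatural}, \ref{Stm:negnatural}, and~\ref{Stm:negsum} of Lemma~\ref{Lm:natural}, and equate $n-k$ with $n-1$ to get $k=1$. Nothing further is needed.
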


\begin{proof}
Let our sequent include $k$ formulae of the form $\widehat{C}$ and $(n-k)$
formulae of the form $\widehat{C}^\bot$. Then, on one hand, 
$\natural(A_1) + \ldots + \natural(A_n) = n - 1$
by Statement~\ref{Stm:negsum} of the previous Lemma. On the other hand,
by Statements~\ref{Stm:posnatural} and~\ref{Stm:negnatural},
$\natural(A_1) + \ldots + \natural(A_n) = k \cdot 0 + (n - k) \cdot 1 = n - k$.
Thus, $n - k = n - 1$, whence $k = 1$.
\end{proof}

Now we are ready to prove Theorem~\ref{Th:embed}. %, $4 \Rightarrow 1$. FIXME (remove this)

\begin{proof}[Proof of Theorem~\ref{Th:embed}.]
\fbox{$4 \Rightarrow 1$} We proceed by induction on the derivation of $\vdash \widehat{\Pi}^\bot, \widehat{B}$ in $\Linlog$.
In our notation, we'll always put the formula of the form $\widehat{B}$ into the rightmost position (and use the cyclically
transformed versions of the rules, as shown above, see Section~\ref{S:Linlog}).

%FIXME: replace $\widetilde{\Phi}$ with $\widehat{\Phi}^\bot$

The most interesting case is the $(\Tensor)$ rule. If it yields the rightmost formula, $\widehat{B} = \widehat{E \cdot F} = 
\widehat{E} \Tensor \widehat{F}$, then the $(\Tensor)$ rule application transforms into
$(\to\cdot)$:

$$
\infer[(\Tensor_2)]{\vdash \widehat{\Gamma}^\bot, \widehat{\Delta}^\bot, 
\widehat{E} \Tensor \widehat{F}}
{\vdash \widehat{\Delta}^\bot, \widehat{E} & 
\vdash \widehat{\Gamma}^\bot, \widehat{F}}
\TRANSFORM
\infer[(\to\cdot)]{\Gamma, \Delta \to E \cdot F}{\Delta \to E & \Gamma \to F}
$$
%Here and further, for $\Phi = A_1, \ldots, A_k$, the notation $\widetilde{\Phi}$ means
%$\widehat{A}_k^\bot, \ldots, \widehat{A}_1^\bot$.

If the $(\Tensor)$ rule yields a formula of the form $\widehat{A}^\bot$ from
$\widehat{\Pi}^\bot$, there are two possibilities:
$\widehat{A}^\bot$ is either $\widehat{E} \Tensor \widehat{F}^\bot = (\widehat{F \SL E})^\bot$ or
$\widehat{F}^\bot \Tensor \widehat{E} = (\widehat{E \BS F})^\bot$. Also one can apply the $(\Tensor)$ either in the
$(\Tensor_1)$ or in the $(\Tensor_2)$ form. This leads to four possible cases. Two of them are handled as follows:
$$
\infer[(\Tensor_2)]{\vdash \widehat{\Gamma}_1^\bot, \widehat{\Delta}^\bot, \widehat{E} \Tensor \widehat{F}^\bot, \widehat{\Gamma}_2^\bot, \widehat{B}}
{\vdash \widehat{\Delta}^\bot, \widehat{E} & 
\vdash
\widehat{\Gamma}_1^\bot, \widehat{F}^\bot, \widehat{\Gamma}_2^\bot, \widehat{B}}
\TRANSFORM
\infer[(\SL\to)]{\Gamma_2, F \SL E, \Delta, \Gamma_1 \to B}{\Delta \to E & \Gamma_2, F, \Gamma_1 \to B}
$$
$$
\infer[(\Tensor_1)]{\vdash \widehat{\Gamma}_1^\bot, \widehat{F}^\bot \Tensor \widehat{E}, 
\widehat{\Delta}^\bot, \widehat{\Gamma}_2^\bot, \widehat{B}}
{\vdash \widehat{\Gamma}_1^\bot, \widehat{F}^\bot, \widehat{\Gamma}_2^\bot, \widehat{B} &
\vdash \widehat{E}, \widehat{\Delta}^\bot}
\TRANSFORM
\infer[(\BS\to)]{\Gamma_2, E \BS F, \Delta, \Gamma_1 \to B}{\Delta \to E &
\Gamma_2, F, \Gamma_1 \to B}
$$
In the other two cases, we have the following:
$$
\infer[(\Tensor_1)]{\vdash \widehat{\Gamma}_1^\bot, \widehat{E} \Tensor \widehat{F}^\bot, \widehat{\Delta}^\bot, \widehat{\Gamma}_2^\bot, \widehat{B}}
{\vdash \widehat{\Gamma}_1^\bot, \widehat{E}, \widehat{\Gamma}_2^\bot, \widehat{B} 
& \vdash\widehat{F}^\bot, \widehat{\Delta}^\bot}
\qquad\text{\raisebox{1em}{or}}\qquad
\infer[(\Tensor_2)]{\vdash \widehat{\Gamma}_1^\bot, \widehat{\Delta}^\bot, 
\widehat{F}^\bot \Tensor \widehat{E}, \widehat{\Gamma}_2^\bot, \widehat{B}}
{\vdash \widehat{\Delta}^\bot,  \widehat{F}^\bot  & \vdash \widehat{\Gamma}_1^\bot, \widehat{E}, \widehat{\Gamma}_2^\bot, \widehat{B}}
$$
These situations violate Lemma~\ref{Lm:unique}, since in $\vdash\widehat{\Gamma}_1^\bot, \widehat{E}, \widehat{\Gamma}_2^\bot, \widehat{B}$
there are two formulae of the form $\widehat{C}$, and therefore this premise couldn't be derivable in $\Linlog$. Thus, these two cases are
impossible. 

All other rules are translated straightforwardly:
$$
\infer[(\Par)]{\vdash \widehat{\Gamma}^\bot, \widehat{E}^\bot \Par \widehat{F}}
{\vdash \widehat{\Gamma}^\bot, \widehat{E}^\bot, \widehat{F}}
\TRANSFORM
\infer[(\to\BS)]{\Gamma \to E \BS F}{E, \Gamma \to F}
$$
$$
\infer[(\Par)]{\vdash \widehat{\Gamma}^\bot, \widehat{F} \Par \widehat{E}^\bot}
{\vdash \widehat{E}^\bot, \widehat{\Gamma}^\bot, \widehat{F}}
\TRANSFORM
\infer[(\to\BS)]{\Gamma \to F \SL E}{\Gamma, E \to F}
$$
$$
\infer[(\Par)]{\vdash \widehat{\Gamma}_1^\bot, \widehat{E}^\bot \Par \widehat{F}^\bot,
\widehat{\Gamma}_2^\bot, \widehat{B}}
{\vdash \widehat{\Gamma}_1^\bot, \widehat{E}^\bot, \widehat{F}^\bot,
\widehat{\Gamma}_2^\bot, \widehat{B}}
\TRANSFORM
\infer[(\cdot\to)]{\Gamma_2, F \cdot E, \Gamma_1 \to B}{\Gamma_2, F, E, \Gamma_1 \to B}
$$
$$
\infer[(\AND)]{\vdash \widehat{\Gamma}^\bot, \widehat{E}_1 \AND \widehat{E}_2}
{\vdash \widehat{\Gamma}^\bot, \widehat{E}_1 & 
\vdash \widehat{\Gamma}^\bot, \widehat{E}_2}
\TRANSFORM
\infer[(\to\wedge)]{\Gamma \to E_1 \wedge E_2}{\Gamma \to E_1 & \Gamma \to E_2}
$$
$$
\infer[(\AND)]{\vdash \widehat{\Gamma}_1^\bot, \widehat{F}_1^\bot \AND \widehat{F}_2^\bot,
\widehat{\Gamma}_2^\bot, \widehat{B}}
{\vdash \widehat{\Gamma}_1^\bot, \widehat{F}_1^\bot,
\widehat{\Gamma}_2^\bot, \widehat{B} &
\vdash \widehat{\Gamma}_1^\bot, \widehat{F}_2^\bot,
\widehat{\Gamma}_2^\bot, \widehat{B}}
\TRANSFORM
\infer[(\vee\to)]{\Gamma_2, F_1 \vee F_2, \Gamma_1 \to B}
{\Gamma_2, F_1, \Gamma_1 \to B & \Gamma_2, F_2, \Gamma_1 \to B}
$$
$$
\infer[(\OR)]{\vdash \widehat{\Gamma}^\bot, \widehat{E}_1 \OR \widehat{E}_2}
{\vdash \widehat{\Gamma}^\bot, \widehat{E}_i}
\TRANSFORM
\infer[(\to\vee)]{\Gamma \to E_1 \vee E_2}{\Gamma \to E_i}
$$
$$
\infer[(\OR)]{\vdash \widehat{\Gamma}_1^\bot, \widehat{F}_1^\bot \OR \widehat{F}_2^\bot,
\widehat{\Gamma}_2^\bot, \widehat{B}}
{\vdash \widehat{\Gamma}_1^\bot, \widehat{F}_i^\bot,
\widehat{\Gamma}_2^\bot, \widehat{B}}
\TRANSFORM
\infer[(\wedge\to)]{\Gamma_2, F_1 \wedge F_2, \Gamma_1 \to B}
{\Gamma_2, F_i, \Gamma_1 \to B}
$$
$$
\infer[(\U)]{\vdash \U}{} \TRANSFORM \infer[(\to\U)]{\to\U}{}
$$
$$
\infer[(\bot)]{\vdash \widehat{\Gamma}_1^\bot, \bot, \widehat{\Gamma}_2^\bot,
\widehat{B}}{\vdash \widehat{\Gamma}_1^\bot, \widehat{\Gamma}_2^\bot,
\widehat{B}}
\TRANSFORM
\infer[(\U\to)]{\Gamma_2, \U, \Gamma_1 \to B}{\Gamma_2, \Gamma_1 \to B}
$$
The $(\top)$ rule cannot be applied, since $\top$ is neither of the form 
$\widehat{C}$, nor of the form $\widehat{C}^\bot$.
$$
\infer[(!)]{\vdash ?^{s_1} \widehat{A}_1^\bot, \ldots, {?}^{s_n} \widehat{A}_n^\bot,
{!}^{s} \widehat{B}}{\vdash ?^{s_1} \widehat{A}_1^\bot, \ldots, {?}^{s_n} \widehat{A}_n^\bot,
\widehat{B}}
\TRANSFORM
\infer[(\to{!})]{{!}^{s_n} A_n, \ldots, {!}^{s_1} A_1 \to {!}^s B}
{{!}^{s_n} A_n, \ldots, {!}^{s_1} A_1 \to B}
$$

\fbox{$1 \Rightarrow 2$} Trivial: allowing the cut rule doesn't invalidate
cut-free derivations.

\fbox{$2 \Rightarrow 3$} Straightforward induction on the derivation in 
$\LambekA + (\CUT)$. The cut rule is translated as follows:
$$
\infer[(\CUT)]{\Gamma_1, \Pi, \Gamma_2 \to C}
{\Pi \to A & \Gamma_1, A, \Gamma_2 \to C}
\TRANSFORM
\infer[(\CUT)]{\vdash \widehat{\Pi}^\bot,
\widehat{\Gamma}_1^\bot, \widehat{C}, \widehat{\Gamma}_2^\bot}
{\vdash \widehat{\Pi}^\bot, \widehat{A} & 
\vdash \widehat{A}^\bot, \widehat{\Gamma}_1^\bot, \widehat{C}, \widehat{\Gamma}_2^\bot}
$$

For translating other rules, one simply reverses arrows in 
the proof of the $4 \Rightarrow 1$ implication (see above).

\fbox{$3 \Rightarrow 4$} Follows from cut elimination in $\Linlog$ 
(Theorem~\ref{Th:cutelim}).
\iffalse
FIXME: in cyclic linear logic, sequent is non-empty; in Lambek, the
left-hand side could be empty

FIXME: in Lambek: terms 'succedent', 'antecedent'

TBC\fi
\end{proof}

\iffalse
NB: $\OR$ and $\AND$ are commutative

Counter $\natural$: Pentus fmonov extended

\begin{align*}
& \natural(p) = 0\\
& \natural(\np) = 1\\
& \natural(A \Par B) = \natural(A) + \natural(B) - 1\\
& \natural(A \Tensor B) = \natural(A) + \natural(B)\\
\end{align*}

$\natural({?}^{s} A) = \natural({!}^s A) = \natural(A)$

FIXME!! PROBLEM with additives 
Suggestion: let $\natural(A \vee B) = \natural(A \wedge B) = \natural(A)$
(for Lambek translations, $\natural(\widehat{A}) = \natural(\widehat{B})$.)

\begin{theorem}\label{Th:embedding}
$\Lambek \vdash A_1, \ldots, A_n \to B \iff
\Linlog \vdash \widehat{A}_n^\bot, \ldots, \widehat{A}_1^\bot, \widehat{B}$
\end{theorem}

Invariants: \\
1. $\natural(\widehat{A}) = 0$\\
2. $\natural(\widehat{A}^\bot) = 1$\\
3. if $A \vee B$ or $A \wedge B$ is of the form $\widehat{C}$ or
$\widehat{C}^\bot$, then $\natural(A) = \natural(B)$.

....

$$
\xymatrix@-2em{
\Lambek \vdash A_1, \ldots, A_n \to B \ar@{=>}[rr] & & \Lambek + (\CUT) \vdash A_1, \ldots, A_n \to B \ar@{=>}[dd]\\\\
\Linlog \vdash \widehat{A}_n^\bot, \ldots, \widehat{A}_1^\bot, \widehat{B} & & 
\Linlog + (\CUT) \vdash \widehat{A}_n^\bot, \ldots, \widehat{A}_1^\bot, \widehat{B} \ar@{=>}[ddl]\\\\
& \Linlog + (\MIX) \vdash \widehat{A}_n^\bot, \ldots, \widehat{A}_1^\bot, \widehat{B} \ar@{=>}[uul]
}
$$

....

\fi

\section{Cut vs. Contraction}\label{S:CutVsContr}

%TBA (copy from Fest' paper)
The contraction rules of $\LambekA$ and $\Linlog$ are {\em non-local,} {\em i.e.,}
they can take formulae for contraction from distant places of the sequent. In the
presence of exchange (permutation) rules, non-local contraction rules are equivalent
to {\em local} ones, that contract two neighbour copies of the same formula
marked with an appropriate subexponential: % and contract them into one:
$$
\infer[(\CONTR),\text{ for $\LambekA$;}]
{\Gamma_1, {!}^s A, \Gamma_2 \to C}
{\Gamma_1, {!}^s A, {!}^s A, \Gamma_2 \to C}
\qquad
\infer[(\CONTR),\text{ for $\Linlog$.}]
{\vdash {?}^s A, \Gamma}
{\vdash {?}^s A, {?}^s A, \Gamma}
$$

If the subexponential doesn't allow exchange ($s \in \Cc - \Ec$), however, %the situation
%is different. The local contraction rule is weaker than the non-local one; moreover,
%using only the local contraction rule leads to problems. Namely, in this case
cut elimination {\bf fails.}

%However, if ${!}$ enjoys only the local contraction rule $(\CONTR)$, cut eliminaton {\bf fails.}
%Notably enough, some combinations of the rules for ${!}$ lead to problems. Namely, if one takes
%only the local version of contraction rule $(\CONTR)$ for ${!}$, cut elimination {\bf fails.}

\begin{theorem}\label{Th:cutfail}
The extension of the Lambek calculus with a unary connective ${!}$ axiomatised by
rules $({!} \to)$, $(\to {!})$, $(\CONTR)$, and, optionally,
  $(\WEAK)$ does not admit $(\CUT)$.
\end{theorem}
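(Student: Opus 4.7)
The plan is to establish cut failure by exhibiting a concrete sequent $S$ that is derivable in the calculus with $(\CUT)$ but admits no cut-free derivation in the calculus with only $({!}\to)$, $(\to{!})$, $(\CONTR)$, and possibly $(\WEAK)$. The underlying intuition is that $(\CUT)$ provides a way to propagate information across non-${!}$ formulas in an antecedent that local $(\CONTR)$ alone cannot: once we have the derivable sequent $!A \to !A \cdot !A$ (obtained by applying local $(\CONTR)$ to the trivially derivable $!A, !A \to !A \cdot !A$), cut lets us effectively ``spread'' a single $!A$ into two copies even across an intervening immovable formula, a manoeuvre that the local contraction rule, constrained to adjacent occurrences, cannot reproduce.

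I would first pick $S$ so that its with-cut derivation uses exactly this pattern. The derivation then has three pieces: the duplication $!A \to !A \cdot !A$ via local $(\CONTR)$; a derivation of the fattened sequent $!A, !A, \Delta \to C$ using purely cut-free rules; and a $(\cdot\to)$-step followed by $(\CUT)$ that reassembles these into $S = !A, \Delta \to C$, where the two $!A$'s end up separated by $\Delta$. For cut-free non-derivability, I would proceed by induction on the height of any alleged cut-free derivation, invoking the subformula property (which holds cut-freely) to restrict attention to subformulas of $S$. A case analysis on the last rule then eliminates each option: splitting rules $(\to\cdot)$, $(\to\BS)$, $(\to\SL)$, $(\BS\to)$, $(\SL\to)$ cannot accommodate the single $!A$ on both sides of the split; the $(\to{!})$ rule does not apply when the antecedent contains a non-${!}$ formula; and the remaining left rules shift the problem to strictly smaller sequents that inherit the same obstruction.

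The main obstacle lies precisely in this cut-free non-derivability: inspecting the last rule is not enough, because $(\CONTR)$ and $(\WEAK)$ can rearrange and enlarge the antecedent arbitrarily many times before the ``real'' rule is applied, and one must show that no such sequence ever produces a shape from which $S$ can be legitimately concluded. The resolution I envisage is to formulate an invariant on sequents built from subformulas of $S$---for instance, a numerical quantity that tracks the multiset of non-${!}$ formulas and their relative positions with respect to the ${!}$-formulas in the antecedent---that is preserved by every cut-free rule (including the structural ones, thanks to the locality of $(\CONTR)$) but whose value for $S$ cannot be attained by any axiom-rooted derivation. Establishing such an invariant, and verifying that it survives every cut-free rule application, is the technical heart of the argument.
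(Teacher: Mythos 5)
Your overall strategy --- exhibit a concrete sequent derivable with $(\CUT)$ but not cut-free, then do a case analysis for the negative half --- matches the paper's. But the specific mechanism you propose for building the counter-example does not work, and the gap sits exactly where the theorem lives. You plan to derive ${!}A \to {!}A \cdot {!}A$ by local contraction and then cut it against a sequent containing ${!}A \cdot {!}A$. The trouble is that cut replaces the occurrence of ${!}A\cdot{!}A$ \emph{in place} by a single ${!}A$; it cannot separate the two copies by intervening material. Moreover, in any cut-free derivation of $\Gamma_1, {!}A\cdot{!}A, \Gamma_2 \to C$ the product is introduced by $(\cdot\to)$ from two \emph{adjacent} occurrences ${!}A, {!}A$, at which point local $(\CONTR)$ applies directly; propagating this down yields a cut-free derivation of $\Gamma_1, {!}A, \Gamma_2 \to C$. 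So every sequent your recipe produces is in fact cut-free derivable, and no counter-example arises.

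The ingredient you are missing is the promotion rule $(\to{!})$. The paper's counter-example is $r \SL q,\, {!}p,\, {!}(p\BS q),\, q\BS s \to r\cdot s$: the cut formula is ${!}q$, obtained by promoting ${!}p, {!}(p\BS q) \to q$, and the other premise uses local $(\CONTR)$ on two \emph{adjacent} copies of ${!}q$. Eliminating that cut would force one to contract the \emph{block} ${!}p, {!}(p\BS q), {!}p, {!}(p\BS q)$, i.e., to contract two occurrences of ${!}p$ separated by ${!}(p\BS q)$ (and symmetrically for ${!}(p\BS q)$) --- exactly what local contraction cannot do and what the non-local rules $(\NCONTR_{1,2})$ are designed for. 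For the negative half, the paper needs no invariant of the kind you sketch: by the subformula and polarity properties only $(\SL\to)$, $(\BS\to)$, $(\to\cdot)$, $({!}\to)$ and $(\CONTR)$ can occur; since ${!}$ appears only at top level, the ${!}$-rules permute to the bottom of the derivation; and the finitely many resulting pure Lambek sequents $r \SL q, p, \ldots, p, (p\BS q), \ldots, (p\BS q), q\BS s \to r\cdot s$ are refuted by exhaustive proof search. Your proposed positional invariant might be made to work, but it is not needed once the counter-example is chosen so that everything reduces to a finite check.
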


\begin{proof}
One can take the following sequent as a counter-example:
$$ r \SL q, {!}p, {!}(p \BS q), q \BS s \to r \cdot s. $$
This sequent has a proof with $(\CUT)$:
$$
\infer[(\CUT)]{r \SL q, {!}p, {!}(p \BS q), q \BS s \to r \cdot s}
{\infer[(\to{!})]{{!}p, {!}(p \BS q) \to {!}q}
{\infer[({!}\to)\mbox{ twice}]{{!}p, {!}(p \BS q) \to q}{\infer[(\BS\to)]{p, p \BS q \to q}{p \to p & q \to q}}} &
\infer[(\CONTR)]{r \SL q, {!}q, q \BS s \to r \cdot s}
{\infer[({!} \to) \mbox{ twice}]{r \SL q, {!}q, {!}q, q \BS s \to r \cdot s}
{
\infer[(\SL\to)]{r \SL q, q, q, q \BS s \to r \cdot s}
{q \to q & \infer[(\BS\to)]{r, q, q \BS s \to r \cdot s}{q \to q & \infer[(\to\cdot)]{\vphantom{\BS} r, s \to r \cdot s}{r \to r & s \to s}}}
%r \SL q, q, q, q \BS s \to r \cdot s
}}
}
$$
but doesn't have a cut-free proof. In order to verify the latter, we notice that due to
subformula and polarity properties, the only rules that can be applied are
$(\SL\to)$, $(\BS\to)$, $(\to\cdot)$, $({!}\to)$, and $(\CONTR)$. Moreover,
since ${!}$ appears only on the top level, the rules operating ${!}$ can be moved
to the very bottom of the proof (this is actually a small {\em focusing} instance
here). These rules would be applied to a (pure Lambek) sequent of the form
$$
r \SL q, p, \ldots, p, (p \BS q), \ldots, (p \BS q), q \BS s \to r \cdot s,
$$
but an easy proof search attempt shows that none of these sequents is derivable in the Lambek calculus.
\end{proof}

This failure of cut elimination of the calculus with $(\CONTR)$ motivates 
the usage of the non-local version of contraction, $(\NCONTR_{1,2})$.

% or alternatively by local contractions
%and exchange rules (in this case, by Proposition~\ref{prop:contr}, $(\NCONTR_{1,2})$ become
%admissible).

With non-local contraction, the sequent used in the proof of Theorem~\ref{Th:cutfail} obtains a
cut-free proof:
$$
\infer[(\CONTR)]
{r \SL q, {!}p, {!}(p \BS q), q \BS s \to r \cdot s}
{\infer[(\NCONTR_2)]{r \SL q, {!}p, {!}p, {!}(p \BS q), q \BS s \to r \cdot s}
{\infer[({!}\to)\text{ \small 4 times}]{r \SL q, {!}p, {!}(p \BS q), {!}p, {!}(p \BS q), q \BS s \to r \cdot s}
{\infer[(\BS\to)\text{ \small twice}]{r \SL q, p, p \BS q, p, p \BS q, q \BS s \to r \cdot s}
{p \to p & p \to p &
\infer[(\SL\to)]{r \SL q, q, q, q \BS s \to r \cdot s}
{q \to q & \infer[(\BS\to)]{r, q, q \BS s \to r \cdot s}{q \to q & \infer[(\to\cdot)]{\vphantom{\BS} r, s \to r \cdot s}{r \to r & s \to s}}}
}}}}
$$

This counter-example can also be translated into $\Linlog$ using the embedding
of $\Lambek$ into $\Linlog$ (see Section~\ref{S:embed}).

%\section{Linguistic Applications of Subexponentials}\label{S:ling}

%TBA

\section{Undecidability of $\Lambek$}\label{S:undec}

%{\bf Complexity.}

In the view of Corollary~\ref{Cor:embed}, we prove {\em lower} complexity bounds for fragments of $\Lambek$ and
{\em upper} ones for fragments of $\Linlog$.

%FIXME: calculi SMLC (multiplicative), SMALC (mult-add), SCLL

\begin{theorem}\label{Th:undec}
If $\Cc \ne \varnothing$ ({\em i.e.,} at least one subexponential allows the non-local contraction rule),
then the derivability problem in $\Lambek^\U$ is undecidable. 
\end{theorem}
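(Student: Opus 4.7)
The strategy is to reduce the (undecidable) word problem for semi-Thue systems to derivability in $\Lambek^\U$, exploiting the fact that for any $s \in \Cc$ a formula ${!}^s R$ can be duplicated across an arbitrary context by the non-local contraction rules $(\NCONTR_1)$ and $(\NCONTR_2)$. Fix such an $s$. Given a semi-Thue system $\Pi = \{\alpha_i \Rightarrow \beta_i\}_{i=1}^N$ over an alphabet $\Sigma$, I would translate each letter $a \in \Sigma$ to a distinct variable $p_a$, each word $w = a_1 \ldots a_k$ to the product $\widehat{w} = p_{a_1} \cdot \ldots \cdot p_{a_k}$ (or to the corresponding context), and each production $\alpha_i \Rightarrow \beta_i$ to the Lambek formula $R_i = \widehat{\alpha}_i \BS \widehat{\beta}_i$. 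A query ``$w_0 \Rightarrow^*_\Pi w_1$?'' would then be encoded by a sequent of the shape
$$\widehat{w}_0,\; {!}^s R_1,\; \ldots,\; {!}^s R_N \to \widehat{w}_1,$$
possibly with minor bookkeeping by means of $\U$.

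For the \emph{soundness} direction (rewriting implies derivability), I would simulate each semi-Thue step $u \Rightarrow_i v$ in $\Lambek^\U$ as follows: using $(\NCONTR_1)$ or $(\NCONTR_2)$ produce a fresh adjacent copy of ${!}^s R_i$ next to the target occurrence of $\widehat{\alpha}_i$, apply $({!}\to)$ to obtain $R_i$, and then apply $(\BS\to)$ to consume $\widehat{\alpha}_i$ and release $\widehat{\beta}_i$ in its place. Iterating along a semi-Thue derivation $w_0 \Rightarrow^* w_1$ then produces a proof of the encoded sequent. For the \emph{faithfulness} direction (derivability implies rewriting), I would appeal to cut elimination in $\Lambek^\U$ (Corollary~\ref{Cor:Lambekcutelim}) so that every derivation may be assumed cut-free, and then analyze such a proof bottom-up: the subformula property together with the restricted shape of the $R_i$ should force each intermediate sequent to have the form ``(some word $u$ reachable from $w_0$ in $\Pi$), (some remaining ${!}^s R_i$'s) $\to \widehat{w}_1$'', so that each productive rule application extracts a genuine semi-Thue step.

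The main obstacle is this faithfulness direction: one must rule out spurious cut-free proofs that derive the target sequent without corresponding to any $\Pi$-derivation, by exploiting the connectives $\BS$, $\cdot$, or $\U$ in unintended ways. Establishing the invariant above rigorously will require induction on the proof together with a careful polarity analysis of subformulas. A secondary subtlety arises when $s \notin \Wc$: residual ${!}^s R_i$-formulas cannot be weakened away, so the encoding must ensure that every copy spawned by contraction is eventually consumed; this can be arranged either by reducing from a variant of the word problem in which every production is used at least once, or by refining $R_i$ so that idle copies become ``harmless'' (for instance, reducible to $\U$ and then removed by $(\U\to)$). The case $s \notin \Ec$ poses no essential obstacle, since the two flavors $(\NCONTR_{1,2})$ already allow positioning a copy of ${!}^s R_i$ on either side of its parent without invoking exchange.
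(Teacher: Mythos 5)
Your overall architecture is the paper's: encode a semi-Thue system by turning each production into a division formula under a contractable subexponential, use $(\NCONTR_{1,2})$ to spawn a copy of it next to each redex, and read off rewriting steps from $(\BS\to)$. But the two obstacles you flag are exactly where the mathematical content lies, and neither is resolved. First, the disposal of the $N$ ``anchor'' occurrences ${!}^s R_i$ when $s \notin \Wc$: your option (a) (reduce from a word problem in which every production is used at least once) does not work, because without exchange the anchors are frozen at their positions at the right end of the antecedent. Contraction read bottom-up only spawns \emph{new} copies elsewhere; the anchor itself persists at its position until it is either weakened (unavailable) or consumed by $({!}\to)$ \emph{in place}, which would require $\widehat{\alpha}_i$ to materialize immediately next to it there --- not something a generic instance provides. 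Your option (b) is the right idea and is what the paper actually does: it pairs each anchor ${!}^s B_i$ with a companion formula $\U \SL {!}^s B_i$ in the encoded sequent, so that at the top of the proof each pair collapses via $(\SL\to)$ to a single $\U$, which $(\U\to)$ then deletes; this is precisely the sense in which ``$(\U\to)$ is weakening, but for $\U$ rather than ${!}A$.'' You would need to build this explicitly into the encoding rather than leave it as an option.

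Second, faithfulness. The invariant you propose --- every sequent in a cut-free proof has the form ``(word reachable from $w_0$), (remaining ${!}^s R_i$'s) $\to \widehat{w}_1$'' --- is false as stated: the contracted copies of ${!}^s R_i$ sit interleaved \emph{inside} the word (that is the whole point of non-local contraction), the left premises of $(\BS\to)$ have succedent $\widehat{\alpha}_i$ rather than $\widehat{w}_1$, and $(\cdot\to)$/$(\to\cdot)$ create partial unfoldings of the products. A correct invariant could surely be formulated, but it is delicate, and the paper sidesteps it by a chain of reductions: it first passes to the auxiliary calculus $\LambekE^\U$ with a full exponential (where weakening lets one cut away the $\U \SL {!}B_i$ companions), then erases all ${!}$-formulas from a cut-free proof and simulates each $({!}\to)$ by a cut against an axiom of a product-only theory $\mathcal{T}$, and finally uses cut normalization plus the subformula property to conclude that only $(\to\cdot)$ and theory cuts occur, from which the rewriting sequence is read off by an easy induction. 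In short: right plan, but the two steps you defer constitute the proof, and one of your two proposed repairs for the weakening issue would fail.
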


The proof is follows the line presented in~\citet{KanKuzSceFCT}, using
ideas from \citet{LMSS}, \citet{Kanazawa}, and \citet{deGroote}. In the latter three
papers, undecidability is established for non-commutative propositional linear logic
systems equipped with an exponential that allows all structural rules (contraction, weakening,
and exchange), as $\LambekE$ defined below. The difference of our setting is that here only contraction is guaranteed and
exchange and weakening are optional.
 %FIXME: other refs.... (maybe to our other papers also?)

The undecidability proof is based on encoding word rewriting (semi-Thue)
systems~\citep{Thue}. 
A word rewriting system over alphabet $\Af$ is a finite set $P$ of pairs of
words over $\Af$. Elements of $P$ are called {\em rewriting rules} and are applied
as follows: if $\langle \alpha, \beta \rangle \in P$, then $\eta \, \alpha \, \theta
\Rightarrow \eta \, \beta \, \theta$ for arbitrary (possibly empty) words $\eta$ and
$\theta$ over $\Af$. The relation $\Rightarrow^*$ is the reflexive transitive closure
of $\Rightarrow$.

The following classical result appears in works of~\citet{markov47dan}
and~\citet{post47jsl}.

\begin{theorem}\label{Th:MarkovPost}
There exists a word rewriting system $P$ such that the set 
$\{ \langle \gamma, \delta \rangle \mid \gamma \Rightarrow^* \delta \}$
is r.e.-complete (and therefore undecidable).~{\rm\citep{markov47dan,post47jsl}}
\end{theorem}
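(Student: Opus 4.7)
The plan is to reduce the halting problem for Turing machines---known to be r.e.-complete---to the word derivability problem for a suitably chosen semi-Thue system. First, I would fix a Turing machine $M$ whose halting problem is r.e.-complete (for instance, a universal machine). A configuration of $M$ with tape content $a_1 \cdots a_n$, head scanning position $i+1$, and current state $q$ is encoded as a word $\#\, a_1 \cdots a_i\, q\, a_{i+1} \cdots a_n\, \#$ over an alphabet $\Af$ consisting of tape symbols, state symbols, and a boundary marker $\#$.

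Next, for each transition of $M$ I would introduce rewriting rules operating locally on the state symbol and the tape cell under the head. A right-moving transition $\delta(q,a) = (q',b,R)$ becomes the rule $\langle q\,a,\; b\,q'\rangle$; a left-moving transition $\delta(q,a) = (q',b,L)$ becomes a family of rules $\langle c\,q\,a,\; q'\,c\,b\rangle$ indexed by $c \in \Af$; and boundary-expansion rules such as $\langle q\,\#,\; q\,B\,\#\rangle$ (with $B$ the blank) handle the potentially unbounded tape. These rules faithfully mirror single-step transitions, in the sense that $C \Rightarrow C'$ in the semi-Thue system iff $C'$ is a one-step successor of $C$ in $M$.

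To turn halting into a pure reachability property, I would designate an accepting state $q_f$ and add a small set of cleanup rules allowing $q_f$ to erase adjacent tape symbols, collapsing any accepting configuration to a fixed canonical word, say $\#\, q_f\, \#$. Then $M$ halts on input $x$ iff $\#\, q_0\, x\, \# \Rightarrow^* \#\, q_f\, \#$, giving the desired many-one reduction from the halting problem to $\{\langle \gamma,\delta\rangle \mid \gamma \Rightarrow^* \delta\}$. The converse direction---recursive enumerability of the derivability set---is immediate, since derivations of bounded length can be exhaustively enumerated.

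The delicate point, where most of the care must be invested, is faithfulness of the simulation: cleanup rules must not fire prematurely and produce the target word without an actual halting computation being simulated, and no simulation rule should apply outside the head region so as to generate spurious configurations. The standard defences are (i) maintaining the invariant that each reachable word contains exactly one state symbol, which follows by induction since every rule consumes and produces exactly one state symbol, and (ii) choosing $q_f$ distinct from any intermediate state so that cleanup rules cannot be triggered before the computation genuinely reaches acceptance.
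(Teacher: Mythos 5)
The paper does not prove this statement at all: it is imported as a classical result, with the proof deferred to the cited works of Markov and Post. Your Turing-machine simulation is precisely the standard argument underlying those citations (Post's 1947 reduction to semi-Thue systems proceeds this way), and your sketch is correct as it stands, including the two faithfulness safeguards --- the single-state-symbol invariant and the choice of a terminal $q_f$ combined with determinism of $M$ --- that make the reduction sound in both directions.
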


In our encoding we'll actually need the weakening rule. However, our subexponential doesn't
necessarily enjoy it.
To simulate weakening, we use the unit constant:
% to our version of the Lambek calculus with
%${!}$; in the end of this section we show how to eliminate it. For the unit constant, %$\mathbf{1}$,
%we have one axiom $\to\mathbf{1}$ and one rule:
%$$
%\infer[(\mathbf{1}\to)]{\Gamma, \mathbf{1}, \Delta \to C}
%{\Gamma, \Delta \to C}
%$$
actually, the $(\U\to)$ rule is weakening, but for $\mathbf{1}$ rather than ${!}A$.

Let $P$ be the word rewriting system from Theorem~\ref{Th:MarkovPost} and consider
all elements of $\Af$ as variables of the Lambek calculus. We convert rewriting rules
of $P$ into Lambek formulae in the following way:
$$
\mathcal{B} = \{ (u_1 \cdot \ldots \cdot u_k) \SL (v_1 \cdot \ldots \cdot v_m) \mid \langle u_1 \ldots u_k, v_1 \ldots v_m \rangle \in P \}.
$$
If $\mathcal{B} = \{ B_1, \ldots, B_n \}$ (we can take any ordering of $\mathcal{B}$), let
$$
\Phi = \mathbf{1} \SL {!}^s B_1, {!}^s B_1, \ldots, \mathbf{1} \SL {!}^s B_n, {!}^s B_n.
%\Gamma = {!}B_1, \ldots, {!}B_n.
$$
Finally, we consider a {\em theory} (finite set of sequents) $\mathcal{T}$ associated with $P$:
$$
\mathcal{T} = \{ v_1, \ldots, v_m \to u_1 \cdot \ldots \cdot u_k \mid \langle u_1 \ldots u_k, v_1 \ldots v_m \rangle \in P \}.
$$
When talking about {\em derivability from theory} $\mathcal{T}$, we use the rules of the original Lambek calculus,
including cut.

Now let $s \in \Cc$ be the label of the subexponential that allows non-local contraction
(and, possibly, also weakening and/or exchange). We also consider, as a technical
tool, the extension of the Lambek calculus %(FIXME: $L^\U$ ???)
with an exponential modality ${!}$ that
allows all three structural rules, contraction, weakening, and exchange. We denote
this auxiliary calculus by $\LambekE^\U$. 

In our framework, $\LambekE^\U$
is $\mathrm{SLC}^\U_{\Sigma_0}$ with a trivial
subexponential signature $\Sigma_0 = \langle \Ic_0, {\preceq_0}, \Wc_0,
\Cc_0, \Ec_0 \rangle$, where $\Ic_0 = \Wc_0 = \Cc_0 = \Ec_0 = \{ s_0 \}$, 
$\preceq_0$ is trivial, and ${!}^{s_0}$ is denoted
by ${!}$. Thus, $\LambekE^\U$ enjoys all proof-theoretical properties of
$\Lambek^\U$, in particular, cut elimination (Corollary~\ref{Cor:Lambekcutelim}).
% FIXME: also refer to LMSS and maybe de Groote?

For $\mathcal{B} = \{ B_1, \ldots, B_n \}$, let $\Gamma = {!}B_1, \ldots, {!}B_n$.

\begin{lemma}\label{Lm:encoding}
Let $\gamma = a_1 \ldots a_l$ and $\delta = b_1 \ldots b_k$ be arbitrary words over $\Af$.
Then the following are equivalent:
\begin{enumerate}
\item $\gamma \Rightarrow^* \delta$;
\item the sequent $\Phi, b_1, \ldots, b_k \to a_1 \cdot \ldots \cdot a_l$ is derivable in 
$\Lambek^\U$;
%our
%extension of the Lambek calculus;
\item the sequent $\Gamma,  b_1, \ldots, b_k \to a_1 \cdot \ldots \cdot a_l$ is derivable in
$\LambekE^\U$;
%the extension of the Lambek calculus in which ${!}$ enjoys all structural rules, in particular,
%$(\WEAK)$;
\item the sequent $b_1, \ldots, b_k \to a_1 \cdot\ldots\cdot a_l$ is derivable from $\mathcal{T}$.
\end{enumerate}
\end{lemma}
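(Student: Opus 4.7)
The proof will follow the round-robin $1 \Rightarrow 2 \Rightarrow 3 \Rightarrow 4 \Rightarrow 1$. For $1 \Rightarrow 2$, I induct on the length of the rewriting sequence $\gamma \Rightarrow^* \delta$. The base case $\gamma = \delta$ starts from the obvious derivation of $a_1, \ldots, a_l \to a_1 \cdot \ldots \cdot a_l$ and prepends $\Phi$ one pair at a time: to insert $\mathbf{1} \SL {!}^s B_i, {!}^s B_i$ on the left, add a $\mathbf{1}$ via $(\U\to)$ and then expand it using $(\SL\to)$ against the axiom ${!}^s B_i \to {!}^s B_i$. The inductive step handles a final rewriting step $\eta \, u_1 \ldots u_k \, \theta \Rightarrow \eta \, v_1 \ldots v_m \, \theta$: take the IH sequent $\Phi, \eta, u_1, \ldots, u_k, \theta \to \gamma$, fold the $u_j$'s into $u_1 \cdot \ldots \cdot u_k$ via $(\cdot\to)$, introduce $B_i = (u_1 \cdot \ldots \cdot u_k) \SL (v_1 \cdot \ldots \cdot v_m)$ by $(\SL\to)$ applied to $v_1, \ldots, v_m \to v_1 \cdot \ldots \cdot v_m$, lift to ${!}^s B_i$ by $({!}\to)$, and finally merge this fresh copy of ${!}^s B_i$ with the one already sitting inside $\Phi$ via the non-local rule $(\NCONTR_1)$ --- this is precisely where the assumption $s \in \Cc$ is used.

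For $2 \Rightarrow 3$, first reinterpret ${!}^s$ as the ${!}$ of $\LambekE^\U$: since ${!}$ admits every structural rule that ${!}^s$ does, the $\Lambek^\U$-derivation transfers verbatim, giving $\widetilde{\Phi}, b_1, \ldots, b_k \to a_1 \cdot \ldots \cdot a_l$ in $\LambekE^\U$, where $\widetilde{\Phi}$ is $\Phi$ with ${!}^s$ replaced by ${!}$. Next, derive $\to \mathbf{1} \SL {!}B_i$ using $(\to\U)$, weakening on ${!}B_i$, and $(\to\SL)$, and cut each $\mathbf{1} \SL {!}B_i$ out of $\widetilde{\Phi}$; contraction and exchange on ${!}$ then collapse the doubled ${!}B_i$'s down to $\Gamma$. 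For $3 \Rightarrow 4$, appeal to cut-elimination in $\LambekE^\U$ (Corollary~\ref{Cor:Lambekcutelim}) and translate the cut-free derivation into one from $\mathcal{T}$: the only way ${!}B_i$ can be consumed is by $({!}\to)$ followed by $(\SL\to)$, whose effect reproduces exactly the effect of a cut against the $\mathcal{T}$-axiom $v_1, \ldots, v_m \to u_1 \cdot \ldots \cdot u_k$, while the structural rules on ${!}B_i$ correspond to reusing, skipping, or repositioning that cut.

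For $4 \Rightarrow 1$, use the word-projection invariant $\mathrm{word}(\cdot)$ that erases all product structure: show by induction on the derivation that every sequent $\Pi \to C$ derivable from $\mathcal{T}$ satisfies $\mathrm{word}(C) \Rightarrow^* \mathrm{word}(\Pi)$. The axiom $p \to p$ gives identity, each $\mathcal{T}$-axiom produces a single rewriting step, $(\cdot\to)$ and $(\to\cdot)$ leave words untouched, and $(\CUT)$ concatenates rewriting chains. Applied to the goal sequent this yields $\gamma \Rightarrow^* \delta$. I expect the main obstacle to be step $2 \Rightarrow 3$: neutralising each $\mathbf{1} \SL {!}^s B_i$ in $\Phi$ requires producing $\to \mathbf{1} \SL {!}B_i$, which crucially relies on weakening of ${!}$ in $\LambekE^\U$ --- an ingredient that is deliberately absent from ${!}^s$ in $\Lambek^\U$. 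This asymmetry is precisely what forced the $\mathbf{1}$-trick into $\Phi$ in the first place, so threading through the translation carefully (while keeping ${?}^s$-free subformula behaviour under control) is where the delicate bookkeeping lies.
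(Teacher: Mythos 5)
Your four implications follow the same route as the paper: the same inductive construction for $1\Rightarrow 2$ (fold with $(\cdot\to)$, introduce $B_i$ by $(\SL\to)$, promote with $({!}\to)$, then merge with the copy in $\Phi$ by $(\NCONTR_1)$ --- this is indeed where $s\in\Cc$ is used), the same weakening-plus-cut trick for $2\Rightarrow 3$, and the same simulation of $({!}\to)$ by a cut against a $\mathcal{T}$-axiom for $3\Rightarrow 4$. One small inaccuracy in $2\Rightarrow 3$: cutting $\U\SL{!}B_i$ out of $\Phi'$ against $\to \U\SL{!}B_i$ simply deletes that formula and leaves exactly one ${!}B_i$ behind, so nothing gets doubled and no contraction or exchange is needed at that point.

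The genuine gap is in $4\Rightarrow 1$. Your word-projection invariant is only defined on formulae built from variables and $\cdot$, and your case analysis covers only $(\AX)$, the $\mathcal{T}$-axioms, $(\cdot\to)$, $(\to\cdot)$, and $(\CUT)$. But ``derivable from $\mathcal{T}$'' means derivable in the full Lambek calculus with cut, and an arbitrary derivation of $b_1,\ldots,b_k\to a_1\cdot\ldots\cdot a_l$ may detour through sequents containing $\SL$ or $\BS$ (for instance via a cut whose cut formula is a division), on which $\mathrm{word}(\cdot)$ is not defined and your induction hypothesis does not apply. The paper closes this hole by first performing cut normalization, pushing all cuts up to the $\mathcal{T}$-axioms; this yields a weak subformula property guaranteeing that every formula in the normalized derivation is a subformula of $\mathcal{T}$ or of the goal sequent, hence a product of variables, after which exactly your induction goes through. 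You need either this normalization step or a replacement for it (e.g., a soundness argument interpreting all connectives in the monoid of words preordered by $\Rightarrow^*$); as written, the induction does not cover all derivations from $\mathcal{T}$.
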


\begin{proof}
\fbox{$1 \Rightarrow 2$}
Proceed by induction on $\Rightarrow^*$. The base case 
($\gamma \Rightarrow^* \gamma$) is handled as follows:
$$
\infer[(\SL\to) \text{ \small $n$ times}]{\strut \mathbf{1} \SL {!}^s B_1, {!}^s B_1, \ldots, \mathbf{1} \SL {!}^s B_n, {!}^s B_n, 
a_1, \ldots, a_l \to a_1 \cdot \ldots \cdot a_l}
{B_1 \to B_1 & \ldots & B_n \to B_n & 
\infer[(\mathbf{1}\to) \text{ \small $n$ times}]{\strut \mathbf{1}, \ldots, \mathbf{1}, a_1, \ldots, a_l \to a_1 \cdot \ldots \cdot a_l}
{\infer[(\to\cdot) \text{ \small $(l-1)$ times}]{ \strut a_1 \ldots a_m \to a_1 \cdot\ldots\cdot a_l}{a_1 \to a_1 & \ldots & a_m \to a_l }}}
$$

For the induction step, consider the last step of $\Rightarrow^*$:
$$
\gamma \Rightarrow^* \eta\, u_1 \ldots u_k \, \theta \Rightarrow \eta\, v_1 \ldots v_m \, \theta.
$$
Then, since ${!}^s((u_1 \cdot \ldots \cdot u_k) \SL (v_1 \cdot \ldots \cdot v_m))$ is in $\Phi$
and $s \in \Cc$, we enjoy
the following derivation:

$$
\infer[(\NCONTR_1)]
{\Phi, \eta, v_1, \ldots, v_m, \theta \to a_1 \cdot\ldots\cdot a_l}
{\infer[({!}\to)]{\Phi, \eta, {!}^s((u_1 \cdot \ldots \cdot u_k) \SL (v_1 \cdot \ldots \cdot v_m)), v_1, \ldots, v_m, \theta \to a_1 \cdot\ldots\cdot a_l}
{\infer[(\SL\to)]{\Phi, \eta, (u_1 \cdot \ldots \cdot u_k) \SL (v_1 \cdot \ldots \cdot v_m), v_1, \ldots, v_m, \theta \to a_1 \cdot\ldots\cdot a_l}
{\infer[(\to\cdot) \text{  \small $(m-1)$ times}]{\vphantom{\Gamma} v_1, \ldots, v_m \to v_1 \cdot \ldots \cdot v_m}{v_1 \to v_1 & \ldots & v_m \to v_m} & 
\infer[(\cdot\to) \text{ \small $(k-1)$ times}]{\Phi, \eta, u_1 \cdot \ldots \cdot u_k, \theta \to a_1 \cdot\ldots\cdot a_l}
{\Phi, \eta, u_1, \ldots, u_k, \theta \to a_1\cdot\ldots\cdot a_l}}}}
$$

The sequent $\Phi, \eta, u_1, \ldots, u_k, \theta \to a_1\cdot\ldots\cdot a_l$ is derivable by induction hypothesis.

\fbox{$2 \Rightarrow 3$}
For each formula $B_i \in \mathcal{B}$ the sequent $\to \mathbf{1} \SL {!}B_i$ is derivable
in $\LambekE^\U$
using the weakening rule:
$$
\infer[(\to\SL)]{\to \mathbf{1} \SL {!}B_i}
{\infer[(\WEAK)]{{!}B_i \to \mathbf{1}}{\to\mathbf{1}}}
$$
Then we notice that, since ${!}$ in $\LambekE^\U$ obeys all the rules for
${!}^s$ in $\Lambek^\U$, the sequent 
$\Phi', b_1, \ldots, b_k \to a_1 \cdot \ldots \cdot a_l$, where
$\Phi'$ is the result of replacing ${!}^s$ by ${!}$ in $\Phi$,
 is derivable in $\LambekE^\U$.
% still derivable in the calculus with
%more structural rules, 
Then we apply $(\CUT)$ to remove formulae of the form $\U \SL {!}B_i$ from
$\Phi'$. % making
%$\Gamma$ from $\Phi$.
%from $\Phi$ that do not belong to $\Gamma$. 
This transforms $\Phi'$ into $\Gamma$ and yields derivability of
$\Gamma, b_1, \ldots, b_k \to a_1 \cdot \ldots \cdot a_l$ in $\LambekE^\U$.

\fbox{$3 \Rightarrow 4$}
Consider the cut-free derivation of $\Gamma,  b_1, \ldots, b_k \to a_1 \cdot \ldots \cdot a_l$
(as shown above, $\LambekE^\U$ enjoys cut elimination).
%(The system where ${!}$ enjoys all structural rules is a fragment of non-commutative linear logic,
%so we can use the cut elimination theorem from~\cite[Appendix~1]{LMSS}. FIXME  replace)
Remove all formulae of the form
${!}E$ from the left-hand sides of the sequents in this derivation. This transformation doesn't affect rules not operating with ${!}$,
they remain valid. Applications of structural rules ($(\NCONTR_{1,2}$, $(\EXC_{1,2})$, $(\WEAK)$)  do not alter the sequent.
The only non-trivial case is $({!}\to)$. Since all formulae of the form ${!}E$ come from $\Gamma$ (due to the subformula property
of the cut-free derivation), the only possible case is the following one:

$$
\infer{\Delta_1, \Delta_2 \to C}
{\Delta_1, (u_1 \cdot \ldots \cdot u_k) \SL (v_1 \cdot \ldots \cdot v_m), \Delta_2 \to C}
$$
($(u_1 \cdot \ldots \cdot u_k) \SL (v_1 \cdot \ldots \cdot v_m)$ transforms into an invisible
${!}((u_1 \cdot \ldots \cdot u_k) \SL (v_1 \cdot \ldots \cdot v_m))$).
This application is simulated using an extra axiom from the theory $\mathcal{T}$ that we're allowed to use:
$$
\infer[(\CUT)]{\Delta_1, \Delta_2 \to C}
{\infer[(\to\SL)]{\to (u_1 \cdot \ldots \cdot u_k) \SL (v_1 \cdot \ldots \cdot v_m)}
{ 
\infer[(\cdot\to) \text{ \small $(k-1)$ times}]{\vphantom{\Gamma}  v_1 \cdot \ldots \cdot v_m \to u_1 \cdot \ldots u_k}
{\vphantom{\Gamma} v_1, \ldots, v_m \to u_1 \cdot \ldots \cdot u_k}} & 
\Delta_1, (u_1 \cdot \ldots \cdot u_k) \SL (v_1 \cdot \ldots \cdot v_m), \Delta_2 \to C}
$$
The sequent $v_1, \ldots, v_m \to u_1 \cdot \ldots \cdot u_k$ belongs to $\mathcal{T}$.

\fbox{$4 \Rightarrow 1$}
Derivations from $\mathcal{T}$ essentially need the cut rule. However, if one tries to apply the standard
cut elimination procedure, all the cuts move directly to new axioms from $\mathcal{T}$ (this procedure
is called {\em cut normalization}). This yields a weak form of subformula property: any formula appearing
in a normalized derivation is a subformula either of $\mathcal{T}$, or of the goal sequent.
Since both $\mathcal{T}$ and $b_1, \ldots, b_k \to a_1 \cdot\ldots\cdot a_l$ include only variables
and the product operation, $\cdot$, rules for other connectives are never applied in the normalized derivation.
For simplicity we omit parentheses and the ``$\cdot$'' symbols, and the rules get formulated in the
following way:
$$
\infer[(\to\cdot)]{\beta_1 \beta_2 \to \alpha_1 \alpha_2}{\beta_1 \to \alpha_1 & \beta_2 \to \alpha_2} \qquad
\infer[(\CUT)]{\eta\beta\theta \to \gamma}{\alpha \to \beta & \eta\alpha\theta \to \gamma}
$$
(the $(\cdot\to)$ rule becomes trivial), and the axioms are $\alpha \to \alpha$ and rewriting rules from
$P$ with the arrows inversed.

One can easily check the following:
\begin{itemize}
\item if $\alpha_1 \Rightarrow^* \beta_2$ and $\alpha_2 \Rightarrow^* \beta_2$, then
$\alpha_1 \alpha_2 \Rightarrow^* \beta_1 \beta_2$;
\item if $\alpha \Rightarrow^* \beta$ and $\gamma \Rightarrow^* \eta\alpha\theta$, then
$\gamma \Rightarrow^* \eta\beta\theta$.
\end{itemize}
Then, by induction on the derivation, we get $a_1 \ldots a_l \Rightarrow^* b_1 \ldots b_k$,
{\em i.e.,} $\gamma \Rightarrow^* \delta$.
\end{proof}

One could get rid of the unit constant, using the technique from~\citep{kuznetsov11}:

\begin{lemma}\label{Lm:unit}
Let $q$ be a fresh variable and let $\widetilde{\Gamma} \to \widetilde{C}$ be the sequent
$\Gamma \to C$ with $\mathbf{1}$ replaced with $q \SL q$ and every variable $p_i$ replaced
with $(q \SL q) \cdot p_i \cdot (q \SL q)$. Then $\Gamma \to C$ is derivable if and only
if $\widetilde{\Gamma} \to \widetilde{C}$ is derivable.
\end{lemma}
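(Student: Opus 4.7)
The plan is to handle the two directions by rather different arguments.

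For the forward direction I would induct on a derivation of $\Gamma \to C$ in $\Lambek^\U$. Because $\widetilde{\cdot}$ commutes with every non-unit connective, all rules whose active formula is not $\U$ translate mechanically into the same rule applied to the translated premises; the generalised axiom $p \to p$ descends because $\widetilde{p} \to \widetilde{p}$ is derivable by decomposing $(q \SL q) \cdot p \cdot (q \SL q)$ with $(\cdot \to)$ and reassembling with $(\to \cdot)$ around the axiom $p \to p$. The axiom $\to \U$ translates to $\to q \SL q$, which follows in one $(\to \SL)$ step from $q \to q$. The truly delicate case is $(\U \to)$, which asks us to insert a $q \SL q$ at an arbitrary position of $\widetilde{\Gamma}_1, \widetilde{\Gamma}_2$.

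To resolve it I would establish an Absorption Lemma by simultaneous induction on $A$: both $q \SL q, \widetilde{A} \to \widetilde{A}$ and $\widetilde{A}, q \SL q \to \widetilde{A}$ are derivable in $\Lambek$. The base case rests on $q \SL q, q \SL q \to q \SL q$ (two nested applications of $(\SL \to)$ followed by $(\to \SL)$); the variable case then follows by absorbing the intruder into the adjacent $q \SL q$ of the padding, after decomposition by $(\cdot \to)$ and rebuilding by $(\to \cdot)$. The product and division cases reduce to one logical step plus the induction hypothesis, the division cases feeding the two halves of the statement into each other. Given the Absorption Lemma, one instance of $(\U \to)$ is simulated by a single cut on the leftmost formula of $\widetilde{\Gamma}_2$ (or symmetrically on the rightmost of $\widetilde{\Gamma}_1$; if both are empty we instead cut $\to \widetilde{C}$ against $q \SL q, \widetilde{C} \to \widetilde{C}$). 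A final invocation of Corollary~\ref{Cor:Lambekcutelim} eliminates the auxiliary cuts.

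For the backward direction I would exploit the freshness of $q$: the uniform substitution $q := \U$ turns a derivation of $\widetilde{\Gamma} \to \widetilde{C}$ in $\Lambek$ into a derivation of $\widetilde{\Gamma}[q := \U] \to \widetilde{C}[q := \U]$ in $\Lambek^\U$. A short induction on $A$ then yields both $A \to \widetilde{A}[q := \U]$ and $\widetilde{A}[q := \U] \to A$ in $\Lambek^\U + (\CUT)$; the atomic step reduces to the easy equivalences $\U \SL \U \leftrightarrow \U$ and $(\U \SL \U) \cdot p_i \cdot (\U \SL \U) \leftrightarrow p_i$, which use only $(\U \to)$, $(\to \U)$, $(\SL \to)$ and $(\to \SL)$. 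A linear number of cuts, one for each antecedent entry and one for the succedent, then converts the substituted sequent into $\Gamma \to C$, and cut elimination gives a cut-free derivation.

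The main obstacle I anticipate is the subexponential case of the Absorption Lemma: when $A = {!}^s B$, the sequent $q \SL q, {!}^s \widetilde{B} \to {!}^s \widetilde{B}$ appears to have no cut-free proof, since $(\to {!})$ demands a wholly banged antecedent while $q \SL q$ does not fit that shape. In the intended application --- purging $\U$ from the undecidability encoding of Lemma~\ref{Lm:encoding} --- every use of $(\U \to)$ in the constructed proof sees a plain variable immediately adjacent to the $\U$, so the Absorption Lemma need only fire on the multiplicative-unit fragment originally handled in~\citet{kuznetsov11}; Lemma~\ref{Lm:unit} should accordingly be read in that restricted sense, or the Absorption Lemma extended by a more careful treatment of banged formulae.
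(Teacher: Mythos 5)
Your two directions are sound in outline, but the forward direction takes a genuinely different route from the paper's. The paper does not cut against absorption sequents: it observes that $(\U\to)$ can be permuted upward past the rules above it, collects all its applications immediately below the axiom leaves, and then only has to check that the resulting ``generalized axioms'' $\U,\ldots,\U,p_i,\U,\ldots,\U\to p_i$ and $\U,\ldots,\U\to\U$ remain derivable after the replacement, every other rule instance surviving the substitution as is. That avoids both your Absorption Lemma and the closing appeal to Corollary~\ref{Cor:Lambekcutelim}. Your backward direction (substitute $\U$ for the fresh $q$, cut against the equivalences of $\U\SL\U$ with $\U$ and of $(\U\SL\U)\cdot p_i\cdot(\U\SL\U)$ with $p_i$, then eliminate cuts) is exactly the paper's argument spelled out in more detail; the ${!}^sB$ case of your replacement induction is unproblematic there because ${!}^sB$ is itself banged, so $(\to{!})$ applies.

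The obstacle you flag is real, and it is sharper than you suggest: $q\SL q,{!}^s\widetilde{B}\to{!}^s\widetilde{B}$ is not merely resistant to cut-free proof, it is underivable outright (by cut elimination plus proof search: no rule of $\Lambek$ can remove the non-banged $q\SL q$ from the antecedent, while $(\to{!})$ demands an all-banged one), so the Absorption Lemma cannot be ``extended'' to banged formulae. The same obstruction shows that the lemma as literally stated fails on the full language of $\Lambek^\U$: the sequent $\U,{!}^sp\to{!}^sp$ is derivable, but its translation $q\SL q,{!}^s\widetilde{p}\to{!}^s\widetilde{p}$ is not. The paper's own proof shares this blind spot --- the claimed interchange of $(\U\to)$ with ``any rule applied before'' breaks precisely when that rule is $(\to{!})$. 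So your proposed restricted reading is the correct repair: in the derivations actually produced by Lemma~\ref{Lm:encoding}, every application of $(\U\to)$ inserts $\U$ next to a variable or another $\U$, your absorption sequents are derivable for exactly those neighbours, and the undecidability conclusion is unaffected. Within that restricted scope your argument is correct, at the price of invoking cut and cut elimination where the paper's permutation argument stays cut-free.
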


% FIXME: notice on the restriction of language: \cdot, \SL, {?}^s (s \in C)

\begin{proof}
The $(\mathbf{1}\to)$ rule can be interchanged with any rule applied before. Thus one
can place all applications of $(\mathbf{1}\to)$ directly after axioms. All other rules,
except $(\mathbf{1}\to)$, remain valid after the replacements. Axioms 
with $(\U\to)$ applied %after applications
%of $(\mathbf{1}\to)$ 
are sequents of the form 
$\mathbf{1}, \ldots, \mathbf{1}, p_i, \mathbf{1}, \ldots, \mathbf{1} \to p_i$ or 
$\mathbf{1}, \ldots, \mathbf{1} \to \mathbf{1}$. After the replacements they become
derivable sequents $q \SL q, \ldots, q \SL q, (q \SL q) \cdot p_i \cdot (q \SL q), q \SL q, \ldots, q \SL q \to q \SL q$
and $q \SL q, \ldots, q \SL q \to q \SL q$. This justifies the ``only if'' part.

For the ``if'' part, we start with $\widetilde{\Gamma} \to \widetilde{C}$ and substitute
$\mathbf{1}$ for $q$ (substitution of arbitrary formulae for variables is legal in
the $\Lambek^\U$). Since $(\mathbf{1} \SL \mathbf{1})$ is equivalent to $\mathbf{1}$ and
$(\mathbf{1} \SL \mathbf{1}) \cdot p_i \cdot (\mathbf{1} \SL \mathbf{1})$ is equivalent to $p_i$,
the result of this substitution is equivalent to $\Gamma \to C$, whence this sequent is derivable.
\end{proof}

%FIXME: equivalent substitutions and all other trivial stuff???

%Theorem~\ref{Th:MarkovPost} and Lemmas~\ref{Lm:encoding} and~\ref{Lm:unit} immediately
%yield Theorem~\ref{Th:ncontrRE}.

This yields the following theorem:

\begin{theorem}
If $\Cc \ne \varnothing$, then the derivability problem in $\Lambek$ is
undecidable.
\end{theorem}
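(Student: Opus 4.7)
The plan is to reduce the (undecidable) derivability problem in $\Lambek^\U$ to the derivability problem in $\Lambek$ via the translation $\widetilde{\cdot}$ supplied by Lemma~\ref{Lm:unit}. Given any sequent $\Gamma \to C$ in the language of $\Lambek^\U$, pick a fresh variable $q$ and form $\widetilde{\Gamma} \to \widetilde{C}$ by replacing every occurrence of $\U$ by $q \SL q$ and every variable $p_i$ by $(q \SL q) \cdot p_i \cdot (q \SL q)$, leaving all other connectives (in particular, the subexponentials ${!}^s$) intact.

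First I would observe that this translation is computable and that $\widetilde{\Gamma} \to \widetilde{C}$ lies entirely within the language of $\Lambek$: the only connective of $\Lambek^\U$ absent from $\Lambek$ is $\U$, and it has been systematically removed. Second, by Lemma~\ref{Lm:unit}, $\Gamma \to C$ is derivable in $\Lambek^\U$ if and only if $\widetilde{\Gamma} \to \widetilde{C}$ is. But since the target sequent contains no $\U$, the subformula property furnished by cut elimination (Corollary~\ref{Cor:Lambekcutelim}) ensures that no cut-free derivation of it in $\Lambek^\U$ can use the $\U$-rules, so derivability in $\Lambek^\U$ and in $\Lambek$ coincide for this sequent. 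Composing these equivalences gives a computable many-one reduction from the derivability problem in $\Lambek^\U$ (undecidable by Theorem~\ref{Th:undec}, applied to the given $s \in \Cc$) to the derivability problem in $\Lambek$, from which the claim follows.

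The only point requiring care is to confirm that Lemma~\ref{Lm:unit} remains valid in the presence of subexponentials, since the lemma is stated without singling out the purely multiplicative fragment. The subexponential rules $({!}\to)$, $(\to{!})$, $(\WEAK)$, $(\NCONTR_{1,2})$ and $(\EXC_{1,2})$ neither introduce nor destroy $\U$, and substitution of $\U$ for the fresh variable $q$ (used in the ``if'' direction of Lemma~\ref{Lm:unit}) commutes with these rules without incident, as the translated variables $(q\SL q)\cdot p_i\cdot(q\SL q)$ still behave as atomic building blocks under ${!}^s$. This is the step I would spend most time verifying, though I expect no genuine obstacle; once this routine compatibility check is dispatched, the reduction is entirely mechanical.
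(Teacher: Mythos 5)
Your reduction is exactly the paper's argument: the theorem is obtained by composing Theorem~\ref{Th:undec} with the unit-elimination translation of Lemma~\ref{Lm:unit}, together with the (tacit) conservativity of $\Lambek^\U$ over $\Lambek$ on $\U$-free sequents, which your appeal to cut elimination and the subformula property supplies. The compatibility of Lemma~\ref{Lm:unit} with the subexponential rules that you flag is indeed the only point the paper leaves implicit, and your plan for checking it is what is needed.
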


%FIXME: reduction by lemma

Finally, $\LambekA$ and $\Linlog$, being conservative extensions of $\Lambek$, is also undecidable:

\begin{corollary}
If $\Cc \ne \varnothing$, then the derivability problem in $\LambekA$ is undecidable.
\end{corollary}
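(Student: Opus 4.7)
The plan is to argue this as an immediate consequence of the previous theorem together with the conservativity of $\Lambek$ inside $\LambekA$. The content of the corollary is just that adding additive connectives to $\Lambek$ cannot make the derivability problem easier. So the proof reduces to spelling out a many-one reduction from $\Lambek$-derivability to $\LambekA$-derivability.

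First, I would recall that the identity embedding of formulae and sequents from the language of $\Lambek$ into that of $\LambekA$ is well-defined, since the former language (variables, $\U$-free product and divisions, and subexponentials ${!}^s$) is a sub-language of the latter. I would then invoke the subformula property for cut-free $\LambekA$, which is available by Corollary~\ref{Cor:Lambekcutelim} (cut elimination for $\LambekA$). The subformula property implies that any derivation in $\LambekA$ of a sequent $\Pi \to B$ whose formulae lie entirely in the $\Lambek$-fragment uses only $\Lambek$-formulae throughout, and hence only rules of $\Lambek$. In other words, $\Lambek$ is a conservative fragment of $\LambekA$: a $\Lambek$-sequent is derivable in $\LambekA$ if and only if it is derivable in $\Lambek$.

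Given this conservativity, the reduction is immediate: if there were a decision procedure for derivability in $\LambekA$, restricting it to sequents from the $\Lambek$-fragment would yield a decision procedure for $\Lambek$, contradicting the preceding theorem (which holds precisely when $\Cc \ne \varnothing$). Therefore derivability in $\LambekA$ is undecidable under the same hypothesis.

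There is essentially no technical obstacle here, since all the heavy lifting has been done in the proof of the preceding theorem (encoding of semi-Thue systems) and in the cut-elimination theorem for $\LambekA$. The only point that must be stated carefully is that the conservative-fragment argument does not need the theorem statement to have been phrased for $\Lambek^{\U}$ versus $\Lambek$: by Lemma~\ref{Lm:unit} the unit is already eliminable inside $\Lambek$ itself, so it suffices to transport the undecidability result for unit-free $\Lambek$ to $\LambekA$ via conservativity.
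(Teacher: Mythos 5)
Your proposal matches the paper's argument: the corollary is obtained by observing that $\Lambek$ is a conservative fragment of $\LambekA$ (via cut elimination and the subformula property), so undecidability of the former transfers to the latter. Your additional remark about Lemma~\ref{Lm:unit} handling the unit is consistent with how the paper first passes from $\Lambek^\U$ to $\Lambek$ before invoking conservativity.
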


\begin{corollary}
If $\Cc \ne \varnothing$, then the derivability problem in $\Linlog$ is undecidable.
\end{corollary}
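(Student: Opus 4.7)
The plan is to derive this corollary directly from the two preceding results: the undecidability of $\LambekA$ (the immediately preceding Corollary, under the hypothesis $\Cc \ne \varnothing$) and the conservative embedding of $\LambekA$ into $\Linlog$ (Corollary~\ref{Cor:embed}). Together these give a many-one reduction from the $\LambekA$ derivability problem to the $\Linlog$ derivability problem, which is enough to transfer undecidability.

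Concretely, I would proceed as follows. First, I would observe that the translation $A \mapsto \widehat{A}$ defined at the start of Section~\ref{S:embed} is clearly computable: it is a straightforward structural recursion over the formula, and for a sequent $\Pi \to B$ the target sequent $\vdash \widehat{\Pi}^\bot, \widehat{B}$ can be written down in time linear in the size of $\Pi \to B$ (the negations $\widehat{A}^\bot$ are defined by the same kind of recursion, and the reversal of the antecedent is trivial). Second, I would invoke Corollary~\ref{Cor:embed}, which states that $\Pi \to B$ is derivable in $\LambekA$ if and only if $\vdash \widehat{\Pi}^\bot, \widehat{B}$ is derivable in $\Linlog$; this is precisely what makes the translation a valid reduction. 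Third, I would invoke the preceding corollary, which says that under the assumption $\Cc \ne \varnothing$ the derivability problem in $\LambekA$ is undecidable. Composing these gives a computable reduction from an undecidable problem to derivability in $\Linlog$, so the latter must also be undecidable.

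There is essentially no obstacle here: all the work has already been done in Theorem~\ref{Th:embed} (the hard direction $4 \Rightarrow 1$ of which relied on the $\natural$-counter argument via Lemmas~\ref{Lm:natural} and~\ref{Lm:unique}), and in the undecidability proof for $\Lambek$ built on the Markov--Post theorem. The only minor point to mention is that the encoding in Section~\ref{S:undec} uses the unit $\U$, which by Lemma~\ref{Lm:unit} can be eliminated, and that the hypothesis $\Cc \ne \varnothing$ used in the $\LambekA$ corollary is exactly the hypothesis of the present statement, so it propagates unchanged. Thus the proof reduces to one or two sentences citing these earlier results.
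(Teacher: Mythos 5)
Your proposal is correct and follows the same route as the paper: the paper justifies this corollary in one line by noting that $\Linlog$ conservatively extends $\Lambek$ via the embedding of Corollary~\ref{Cor:embed}, so undecidability transfers from the preceding results. Your additional remarks about the computability of the translation and the propagation of the hypothesis $\Cc \ne \varnothing$ are accurate elaborations of that same argument.
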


%FIXME: ... the class of languages is r.e.: move $\Phi$ to the succedent (??? special
%section)
\iffalse
\section{NP-Decidable Multiplicative Fragment}
TBA
%$?^s?^k...???xp$ ($s,k \in \Cc$) -- in particular, if $\Cc = \varnothing$

\section{PSPACE-Decidable Multiplicative-Additive Fragment}
TBA
%thm2.2 in~\cite{LMSS}: poly depth proof tree $\Rightarrow$ NPSPACE

Experiment of natbib:

\cite{Schellinx}

\citep{Schellinx}

\citet{Schellinx}

\fi

\section{Decidability of Systems without Contraction}\label{S:decid}

The non-local contraction rule plays a crucial role in our undecidability proof presented in the previous section.
If there is no subexponential that allows contraction ({\em i.e.,} $\Cc = \varnothing$), the derivability problem
becomes decidable:

\begin{theorem}\label{Th:decid}
If $\Cc = \varnothing$, then the decidability problem for $\Linlog$ belongs to PSPACE and the
decidability problem for $\SMCLL$ belongs to NP. Hence, both problems are
algorithmically decidable.
\end{theorem}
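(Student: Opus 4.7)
The plan is to bound the size of cut-free proofs in both systems under the hypothesis $\Cc = \varnothing$, and then to exploit these bounds to place derivability in the claimed complexity classes. By Theorem~\ref{Th:cutelim} I may restrict to cut-free proof search. Define $|\Gamma|$ to be the total number of connective and constant occurrences in $\Gamma$. The key lemma would verify that, in the absence of the $(\NCONTR)$ rule, every other inference rule is strictly size-reducing when read backwards: $(\Tensor)$ and $(\AND)$ split the sequent into two premises each of measure strictly smaller than the conclusion; $(\Par)$, $(\OR)$, $(!)$, $(?)$, and $(\bot)$ each strip off one outermost connective or constant; $(\WEAK)$ removes a whole ${?}^s A$ formula; and $(\AX)$, $(\U)$, $(\top)$ are base cases. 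Thus the depth of any cut-free proof of $\vdash \Gamma_0$ is bounded by $|\Gamma_0|$.

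The exchange rule $(\EXC)$ preserves the multiset of formulae and so does not shrink $|\Gamma|$; termination therefore needs a normalization step. I would show that consecutive $(\EXC)$ applications compose into a single permutation of the ${?}^s$-marked formulae, so every cut-free derivation can be reshaped so that exchange is applied only immediately before a non-exchange rule. Equivalently, one folds $(\EXC)$ into each structural rule as an implicit rearrangement of ${?}^s$-formulae. Since sequents are already identified up to cyclic equivalence and only ${?}^s$-formulae can slide past others, at each step there are only polynomially many effective orderings to consider.

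For the PSPACE bound on $\Linlog$: the strict size-reduction forces every branch of a cut-free proof to have depth at most $|\Gamma_0|$. Backward proof search can be carried out by an alternating polynomial-time procedure that existentially guesses the rule, the principal formula, the split point for $(\Tensor)$, the disjunct for $(\OR)$, and the implicit permutation, and universally branches on the two premises of $(\Tensor)$ and $(\AND)$. Since APTIME equals PSPACE, this places derivability in PSPACE; equivalently, a depth-first NPSPACE procedure stores only the current sequent and a stack of pending choices, and Savitch's theorem supplies the bound. For the NP bound on $\SMCLL$: the multiplicative fragment has no $(\AND)$, so the only branching rule is $(\Tensor)$; since each premise is strictly smaller than its conclusion, the entire cut-free proof tree has size linear in $|\Gamma_0|$. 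One nondeterministically guesses the whole proof tree---rule labels, split points, chosen disjuncts, exchange permutations---and verifies it in polynomial time.

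The main obstacle I anticipate is the careful bookkeeping around the cyclic structure of sequents together with the implicit $(\EXC)$ normalization: proving that this convention does not overlook any cut-free derivation amounts to showing that the relative cyclic order of non-exchangeable formulae is invariant under backward proof search. A further subtlety is the side condition on $(!)$, which requires every non-principal formula to be of the form ${?}^{s_j} A_j$ with $s_j \succeq s$; one must verify that the implicit rearrangement of ${?}^s$-formulae remains compatible with this restriction, which relies on the upward closure of $\Ec$ under $\preceq$.
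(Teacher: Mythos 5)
Your proposal is correct and follows essentially the same route as the paper: cut elimination, then the observation that with $\Cc = \varnothing$ every non-exchange rule consumes a connective of the goal sequent (your backward size-reduction measure is the mirror image of the paper's forward tracing of connectives), consecutive exchanges folded into single permutations, a linear bound on total proof size giving an NP witness for $\SMCLL$, and a linear depth bound plus depth-first NPSPACE search and Savitch's theorem giving PSPACE for $\Linlog$. The subtleties you flag (cyclic bookkeeping and upward closure of $\Ec$ under $\preceq$ for the $(!)$ side condition) are exactly the points the paper relies on.
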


(Recall that $\Linlog$ is the full cyclic linear logic with subexponentials and
$\SMCLL$ is the system without additive constants and connectives, $\top$, $\Z$, $\AND$ and $\OR$.)

\begin{proof}
By Theorem~\ref{Th:cutelim}, we consider only cut-free derivations.
Since contraction is never applied, each rule, except exchange,
introduces at least one new connective into the sequent
(weakening and the $(\top)$ axiom can introduce whole subformulae at once, all other rules introduce
exactly one connective per rule).
 Thus, in the situation without additive conjunction (in $\SMCLL$) these connectives can be disjointly traced down to the goal sequent, and
each rule application can be associated with a unique connective occurrence in the goal sequent. 
For exchange rules, we consider several consequent applications of $(\EXC)$, possibly for different
${?}^s A$, as one rule. Correctness of such a joint exchange rule application can still be checked in polynomial time.
After this joining, each exchange rule is followed by another rule or yields the goal sequent, therefore
applications of $(\EXC)$ give not more than a half of the total number of rules applied in the derivation.
Thus, the size of a cut-free derivation in $\SMCLL$, for $\Sigma$ with $\Cc = \varnothing$, is linearly
bounded by the size of the goal sequent. Since checking correctness of a derivation can be done in
polynomial time, this derivation serves as an NP witness, so the derivability problem for
$\SMCLL$, for $\Sigma$ with $\Cc = \varnothing$, belongs to the NP class.

For the whole $\Linlog$ system, we
%the well-known fact that PSPACE = NPSPACE (FIXME ref)
%and then 
follow the strategy by~\citet[Section 2.1]{LMSS}. Namely, we show that the height
of a cut-free derivation tree (again, with joined exchange rules) is linear w.r.t.\ the size of the goal sequent.
This follows from the fact that on a path from the goal sequent to an axiom leaf in the derivation tree each rule
either introduces new connectives into the goal sequent or is an exchange rule. Therefore, the length of
such a path is linearly bounded by the size of the goal sequent. (On the other hand the size of the whole derivation
tree could be exponential, because the $(\AND)$ rule copies the same formulae into different branches.)
A derivation tree of polynomial height can be guessed and
checked by a non-deterministic Turing machine with polynomially bounded space, using the depth-first procedure~\citep[Section 2.1]{LMSS}.
This establishes the fact that the derivability problem for $\Linlog$, for $\Sigma$ with $\Cc = \varnothing$, belongs to NPSPACE, which is equal to
deterministic PSPACE by Savitch's theorem~\citep{Savitch}.
\end{proof}

By Corollary~\ref{Cor:embed}, we also get decidability results for the corresponding Lambek systems:

\begin{corollary}
If $\Cc = \varnothing$, then the decidability problem for $\LambekA$ belongs to PSPACE and the
decidability problem for $\Lambek$ belongs to NP. Hence, both systems are
algorithmically decidable.
\end{corollary}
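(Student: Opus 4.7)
The plan is to reduce derivability in the Lambek systems to derivability in the corresponding cyclic linear logic systems via the embedding established in Section~\ref{S:embed}, and then invoke Theorem~\ref{Th:decid}. By Corollary~\ref{Cor:embed}, the sequent $\Pi \to B$ is derivable in $\LambekA$ if and only if $\vdash \widehat{\Pi}^\bot, \widehat{B}$ is derivable in $\Linlog$. So the decision procedure on input $\Pi \to B$ simply computes the translated sequent and runs the procedure guaranteed by Theorem~\ref{Th:decid} on it.

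First I would verify that the translation $A \mapsto \widehat{A}$ (and correspondingly $A \mapsto \widehat{A}^\bot$) is computable in polynomial time and produces a formula of size linearly bounded by the size of $A$. This is immediate from the defining clauses in Section~\ref{S:embed}: each Lambek connective is replaced by a fixed pattern involving at most one binary linear-logic connective and at most one negation symbol, and negation is pushed down to atoms by the definition of $(\cdot)^\bot$. Subexponential labels, variables, and the unit are preserved. Hence $|\widehat{\Pi}^\bot, \widehat{B}| = O(|\Pi \to B|)$.

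Next I would check that the translation maps the restricted language into the intended target: formulae built from $\cdot, \BS, \SL, \U$, and ${!}^s$ use only $\Tensor, \Par, \U, \bot$, and ${!}^s, {?}^s$ on the cyclic-linear-logic side, i.e.\ they stay inside $\SMCLL$. Consequently, the derivability of $\Pi \to B$ in $\Lambek$ reduces, by Corollary~\ref{Cor:embed} applied to the multiplicative-only fragment, to derivability of $\vdash \widehat{\Pi}^\bot, \widehat{B}$ in $\SMCLL$. Since the subexponential signature $\Sigma$ is unchanged by the embedding, the hypothesis $\Cc = \varnothing$ carries over.

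Finally I would invoke Theorem~\ref{Th:decid}: under $\Cc = \varnothing$, derivability in $\Linlog$ is in PSPACE and in $\SMCLL$ is in NP. Composing with the polynomial-time, linear-blowup translation yields the PSPACE bound for $\LambekA$ and the NP bound for $\Lambek$. There is no real obstacle here; the only point that requires a moment's care is confirming that the embedding respects the fragment restriction so that the NP bound (rather than merely PSPACE) can be inherited by the purely multiplicative Lambek system, which is the observation above that $\widehat{(\cdot)}$ never introduces additive connectives or constants.
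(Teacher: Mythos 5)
Your proposal is correct and follows essentially the same route as the paper, which derives this corollary directly from Corollary~\ref{Cor:embed} (the conservative embedding of $\LambekA$ into $\Linlog$) combined with Theorem~\ref{Th:decid}. The extra details you supply---polynomial-time computability of the translation, linear size blowup, and the observation that the multiplicative Lambek fragment lands inside $\SMCLL$ so the NP bound transfers---are exactly the routine checks the paper leaves implicit.
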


Notice that these complexity bounds are exact, since even without subexponentials the derivability problems in
the purely multiplicative Lambek calculus is NP-complete~\citep{PentusNP} and the derivability problem in
the multiplicative-additive Lambek calculus is PSPACE-complete~\citep{KanovichKazimierz} (see also~\citet{Kanazawa}). % FIXME see also Kanazawa].

\section{Conclusions and Future Work}\label{S:future}

In this paper we have considered two systems of non-commutative linear logic---the multiplicative-additive Lambek calculus
and cyclic propositional linear logic---and extended them with subexponentials.
For these extended systems, we've proved cut elimination and shown that the first system can be conservatively embedded
into the second one. We've also shown that, for cut elimination to hold, the contraction rule should be in the non-local form.
Finally, we've established exact algorithmic complexity estimations. Namely, at least one subexponential that allows contraction
makes the system undecidable. On the other hand, subexponentials that don't allow contraction do not increase complexity in comparison
with the original system without subexponentials: it is still NP for multiplicative systems and PSPACE for multiplicative-additive ones.

A natural step to take from here is to investigate focused~\citep{Andreoli} proof systems with non-commutative subexponentials. 
This would open a number of possibilities such as the development of logical frameworks with non-commutative 
subexponentials. Such frameworks have been used, for example, by~\citet{DBLP:conf/lics/PfenningS09} for
the specification of evaluation strategies of functional programs. While their focused proof system contained a single unbounded,
a single bounded and a single non-commutative modalities, focused proof systems with commutative and non-commutative subexponentials 
would allow for any number of modalities allowing the encoding of an even wider range of systems. 
Such investigation is left for future work.

In our undecidability proof, we encoded semi-Thue systems in $\Lambek$, using only three connectives, $\SL$ (one can dually
use $\BS$, of course), $\cdot$, and ${?}^s$ (where $s \in \Cc$). The language can be further restricted to
$\SL$ and ${?}^s$, without $\cdot$, by using a more sophisticated encoding by~\citet{BuszkoZML}, 
see~\citet{KanKuzSceFG}. The number of variables used in the construction could be also reduced to one variable
using the technique by~\citet{KanovichNeutral}. We leave the details of these restrictions for future work.

On the other hand, if we allow subexponentials with contraction to be applied only to variables (${?}^s p$) or
to formulae without $\cdot$ of implication depth 1 (for example, ${?}^s (p \SL q)$), the derivability problem
probably becomes decidable, which would be quite nice for linguistic applications. We leave this as an open
question for future studies.

For extensions of the Lambek calculus, another interesting question, besides decidability and algorithmic complexity, is the
generative power of categorial grammars based on these extensions. Original Lambek grammars generate precisely context-free
languages~\citep{PentusCF}. On the other hand, it actually follows from our undecidability proof that grammars based on $\Lambek$,
where at least one subexponential in $\Sigma$ allows contraction ($\Cc \ne \varnothing$), can generate an arbitrary recursively
enumerable language. For decidable fragments ({\em e.g.,} when $\Cc = \varnothing$, or subexponentials allowing contraction
are somehow restricted syntactically), however, determining the class of languages generated by corresponding grammars is left
for future research.

%TBA
%\begin{thebibliography}{} %FIXME use natbib https://tex.stackexchange.com/questions/276542/natbib-spurious-period-in-bibliography-items-with-dois
%\bibitem[LMSS]{LMSS} TBA %Lincoln, Mitchell, ....
%\end{thebibliography}

\bibliographystyle{abbrvnat}
\bibliography{subexponential.bib}

\providecommand{\noopsort}[1]{}
\begin{thebibliography}{51}
\providecommand{\natexlab}[1]{#1}
\providecommand{\url}[1]{\texttt{#1}}
\expandafter\ifx\csname urlstyle\endcsname\relax
  \providecommand{\doi}[1]{doi: #1}\else
  \providecommand{\doi}{doi: \begingroup \urlstyle{rm}\Url}\fi

\bibitem[Abrusci(1990)]{Abrusci}
V.~M. Abrusci.
\newblock {A comparison between Lambek syntactic calculus and intuitionistic
  linear logic}.
\newblock \emph{Zeitschr. math. Logik Grundl. Math. (Math. Logic Q.)},
  36:\penalty0 11--15, 1990.

\bibitem[Andreoli(1992)]{Andreoli}
J.-M. Andreoli.
\newblock {Logic programming with focusing proofs in linear logic}.
\newblock \emph{J. Logic Comput.}, 2\penalty0 (3):\penalty0 297--347, 1992.

\bibitem[{\noopsort{Benthem}}{van Benthem}(1991)]{vanBenthemLIA}
J.~{\noopsort{Benthem}}{van Benthem}.
\newblock \emph{{Language in action: categories, lambdas and dynamic logic}}.
\newblock North Holland, Amsterdam, 1991.

\bibitem[Bra{\"u}ner and de~Paiva(1998)]{dePaiva}
T.~Bra{\"u}ner and V.~de~Paiva.
\newblock {A formulation of linear logic based on dependency relations}.
\newblock In \emph{Proc. CSL '97}, volume 1414 of \emph{LNCS}, pages 129--148.
  Springer, 1998.

\bibitem[Buszkowski(1982)]{BuszkoZML}
W.~Buszkowski.
\newblock {Some decision problems in the theory of syntactic categories}.
\newblock \emph{Zeitschr. Math. Logik Grundl. Math. (Math. Logic Q.)},
  28:\penalty0 539--548, 1982.

\bibitem[Buszkowski(2010)]{BuszkoLCSL}
W.~Buszkowski.
\newblock {Lambek calculus and substructural logics}.
\newblock \emph{Linguistic Analysis}, 36\penalty0 (1--4):\penalty0 15--48,
  2010.

\bibitem[Chaudhuri(2010)]{Chaudhuri}
K.~Chaudhuri.
\newblock {Classical and intuitionistic subexponential logics are equally
  expressive}.
\newblock In \emph{Proc. CSL '10}, volume 6247 of \emph{LNCS}, pages 185--199.
  Springer, 2010.

\bibitem[Danos et~al.(1993)Danos, Joinet, and Schellinx]{danos93kgc}
V.~Danos, J.-B. Joinet, and H.~Schellinx.
\newblock {The structure of exponentials: Uncovering the dynamics of linear
  logic proofs}.
\newblock In \emph{Kurt G{\"o}del Colloquium}, volume 713 of \emph{LNCS}, pages
  159--171. Springer, 1993.

\bibitem[de~Groote(2005)]{deGroote}
P.~de~Groote.
\newblock {On the expressive power of the Lambek calculus extended with a
  structural modality}.
\newblock In \emph{{Language and Grammar}}, volume 168 of \emph{CSLI Lect.
  Notes}, pages 95--111. 2005.

\bibitem[Gentzen(1935)]{Gentzen}
G.~Gentzen.
\newblock {Untersuchungen {\"u}ber das logische Schlie{\ss}en I}.
\newblock \emph{Mathematische Zeitschrift}, 39:\penalty0 176--210, 1935.

\bibitem[Girard(1987)]{Girard}
J.-Y. Girard.
\newblock Linear logic.
\newblock \emph{Theor. Comput. Sci.}, 50\penalty0 (1):\penalty0 1--102, 1987.

\bibitem[Hodas and Miller(1991)]{hodas91lics}
J.~Hodas and D.~Miller.
\newblock {Logic programming in a fragment of intuitionistic linear logic.
  Extended abstract}.
\newblock In \emph{Proc. LICS '91}, pages 32--42, 1991.

\bibitem[Hodas and Miller(1994)]{hodas94ic}
J.~Hodas and D.~Miller.
\newblock {Logic programming in a fragment of intuitionistic linear logic}.
\newblock \emph{Information and Computation}, 110\penalty0 (2):\penalty0
  327--365, 1994.

\bibitem[Kanazawa(1992)]{KanazawaJoLLI}
M.~Kanazawa.
\newblock {The Lambek calculus enriched with additional connectives}.
\newblock \emph{J. Logic Lang. Inform.}, 1\penalty0 (2):\penalty0 141--171,
  1992.

\bibitem[Kanazawa(1999)]{Kanazawa}
M.~Kanazawa.
\newblock {Lambek calculus: Recognizing power and complexity}.
\newblock In \emph{JFAK. Essays dedicated to Johan van Benthem to the occasion
  of his 50th birthday}. Vossiuspers, Amsterdam Univ. Press, 1999.

\bibitem[Kanovich(1994)]{KanovichKazimierz}
M.~Kanovich.
\newblock {Horn fragments of non-commutative logics with additives are
  PSPACE-complete}.
\newblock {Extended abstract, presented at CSL '94, Kazimierz, Poland}, 1994.

\bibitem[Kanovich et~al.(2016{\natexlab{a}})Kanovich, Kuznetsov, and
  Scedrov]{KanKuzSceAPAL}
M.~Kanovich, S.~Kuznetsov, and A.~Scedrov.
\newblock {Reconciling Lambek's restriction, cut-elimination, and substitution
  in the presence of exponential modalities. ArXiv preprint 1608.02254},
  2016{\natexlab{a}}.

\bibitem[Kanovich et~al.(2016{\natexlab{b}})Kanovich, Kuznetsov, and
  Scedrov]{KanKuzSceFG}
M.~Kanovich, S.~Kuznetsov, and A.~Scedrov.
\newblock {Undecidability of the Lambek calculus with a relevant modality}.
\newblock In \emph{Proc. Formal Grammar '15 and '16}, volume 9804 of
  \emph{LNCS}, pages 240--256. Springer, 2016{\natexlab{b}}.

\bibitem[Kanovich et~al.(2016{\natexlab{c}})Kanovich, Kuznetsov, and
  Scedrov]{KanKuzSceLFCS}
M.~Kanovich, S.~Kuznetsov, and A.~Scedrov.
\newblock {On Lambek's restriction in the presence of exponential modalities}.
\newblock In S.~Artemov and A.~Nerode, editors, \emph{Proc. LFCS 2016}, pages
  146--158. Springer, 2016{\natexlab{c}}.

\bibitem[Kanovich et~al.(2017)Kanovich, Kuznetsov, and Scedrov]{KanKuzSceFCT}
M.~Kanovich, S.~Kuznetsov, and A.~Scedrov.
\newblock {Undecidability of the Lambek calculus with the Lambek calculus with
  subexponential and bracket modalities. ArXiv preprint 1608.04020. Accepted to
  FCT '17}, 2017.

\bibitem[Kanovich(1995)]{KanovichNeutral}
M.~I. Kanovich.
\newblock The complexity of neutrals in linear logic.
\newblock In \emph{Proc. LICS '95}, pages 486--495. IEEE, 1995.

\bibitem[Kuznetsov and Okhotin(2017)]{KuznetsovOkhotin}
S.~Kuznetsov and A.~Okhotin.
\newblock {Conjunctive categorial grammars}.
\newblock In \emph{Proc. MoL '17}, volume W17-3414 of \emph{ACL Anthology},
  pages 140--151, 2017.

\bibitem[Kuznetsov(2011)]{kuznetsov11}
S.~L. Kuznetsov.
\newblock {On the Lambek calculus with a unit and one division}.
\newblock \emph{Moscow Univ. Math. Bull.}, 66\penalty0 (4):\penalty0 173--175,
  2011.

\bibitem[Lambek(1958)]{Lambek58}
J.~Lambek.
\newblock The mathematics of sentence structure.
\newblock \emph{Amer. Math. Monthly}, 65:\penalty0 154--170, 1958.

\bibitem[Lambek(1961)]{Lambek61}
J.~Lambek.
\newblock {On the calculus of syntactic types}.
\newblock In \emph{{Structure of Language and Its Mathematical Aspects}},
  volume~12 of \emph{Proc. Symposia Appl. Math.}, pages 166--178. AMS, 1961.

\bibitem[Lambek(1969)]{Lambek69}
J.~Lambek.
\newblock {Deductive systems and categories II. Standard constructions and
  closed categories}.
\newblock In \emph{Category Theory, Homology Theory and Their Applications I},
  volume~86 of \emph{Lect. Notes Math.}, pages 76--122. Springer, 1969.

\bibitem[Lambek(1993)]{LambekZero}
J.~Lambek.
\newblock From categorial grammar to bilinear logic.
\newblock In \emph{{Substructural Logics}}, volume~2 of \emph{Studies in Logic
  and Computations}, pages 207--237. Clarendon Press, Oxford, 1993.

\bibitem[Lincoln et~al.(1992)Lincoln, Mitchell, Scedrov, and Shankar]{LMSS}
P.~Lincoln, J.~Mitchell, A.~Scedrov, and N.~Shankar.
\newblock Decision problems for propositional linear logic.
\newblock \emph{Annals of Pure and Applied Logic}, 56\penalty0 (1):\penalty0
  239--311, 1992.

\bibitem[Markov(1947)]{markov47dan}
A.~Markov.
\newblock On the impossibility of certain algorithms in the theory of
  associative systems.
\newblock \emph{Doklady Acad. Sci. USSR (N. S.)}, 55:\penalty0 583--586, 1947.

\bibitem[Miller(1994)]{miller94lics}
D.~Miller.
\newblock {A multiple-conclusion meta-logic}.
\newblock In \emph{Proc. LICS '94}, pages 272--281. IEEE, 1994.

\bibitem[Miller(1996)]{miller96tcs}
D.~Miller.
\newblock {Forum: A multiple-conclusion specification logic}.
\newblock \emph{Theor. Comput. Sci.}, 165\penalty0 (1):\penalty0 201--232,
  1996.

\bibitem[Moortgat(1997)]{MoortgatHandbook}
M.~Moortgat.
\newblock {Categorial type logics}.
\newblock In J.~van Benthem and A.~ter Meulen, editors, \emph{Handbook of Logic
  and Language}. Elsevier, 1997.

\bibitem[Moot and Retor{\'e}(2012)]{MootRetore}
R.~Moot and C.~Retor{\'e}.
\newblock \emph{{The logic of categorial grammars: a deductive account of
  natural language syntax and semantics}}, volume 6850 of \emph{LNCS}.
\newblock Springer, 2012.

\bibitem[Morrill(2017)]{MorrillPhilosophy}
G.~Morrill.
\newblock {Grammar logicised: relativisation}.
\newblock \emph{Linguistics and Philosophy}, 40\penalty0 (2):\penalty0
  119--163, 2017.

\bibitem[Morrill(2011)]{MorrillBook}
G.~V. Morrill.
\newblock \emph{{Categorial grammar: logical syntax, semantics, and
  processing}}.
\newblock Oxford Univ. Press, 2011.

\bibitem[Nigam(2012)]{nigam12lics}
V.~Nigam.
\newblock {On the complexity of linear authorization logics}.
\newblock In \emph{Proc. LICS '12}, pages 511--520. IEEE, 2012.

\bibitem[Nigam(2014)]{nigam14tcs}
V.~Nigam.
\newblock {A framework for linear authorization logics}.
\newblock \emph{Theor. Comput. Sci.}, 536\penalty0 (0):\penalty0 21--41, 2014.

\bibitem[Nigam and Miller(2009)]{nigam09ppdp}
V.~Nigam and D.~Miller.
\newblock {Algorithmic specifications in linear logic with subexponentials}.
\newblock In \emph{{Proc. PPDP '09}}, pages 129--140, 2009.

\bibitem[Nigam et~al.(2013)Nigam, Olarte, and Pimentel]{nigam13concur}
V.~Nigam, C.~Olarte, and E.~Pimentel.
\newblock {A general proof system for modalities in concurrent constraint
  programming}.
\newblock In \emph{CONCUR}, volume 8052 of \emph{LNCS}, pages 410--424.
  Springer, 2013.

\bibitem[Nigam et~al.(2016)Nigam, Pimentel, and Reis]{nigam16jlc}
V.~Nigam, E.~Pimentel, and G.~Reis.
\newblock {An extended framework for specifying and reasoning about proof
  systems}.
\newblock \emph{J. Logic Comput.}, 26\penalty0 (2):\penalty0 539--576, 2016.

\bibitem[Olarte et~al.(2015)Olarte, Pimentel, and Nigam]{olarte15tcs}
C.~Olarte, E.~Pimentel, and V.~Nigam.
\newblock {Subexponential concurrent constraint programming}.
\newblock \emph{Theor. Comput. Sci.}, 606:\penalty0 98--120, 2015.

\bibitem[Peirce(1885)]{Peirce}
C.~S. Peirce.
\newblock On the algebra of logic: a contribution to the philosophy of
  notation.
\newblock \emph{American Journal of Mathematics}, 7:\penalty0 180--202, 1885.

\bibitem[Pentus(1993)]{PentusCF}
M.~Pentus.
\newblock {Lambek grammars are context-free}.
\newblock In \emph{Proc. LICS '93}, pages 429--433. IEEE, 1993.

\bibitem[Pentus(1998)]{PentusFmonov}
M.~Pentus.
\newblock {Free monoid completeness of the Lambek calculus allowing empty
  premises}.
\newblock In \emph{Proc. Logic Colloquium '96}, volume~12 of \emph{Lect. Notes
  Logic}, pages 171--209. Springer, 1998.

\bibitem[Pentus(2006)]{PentusNP}
M.~Pentus.
\newblock {Lambek calculus is NP-complete}.
\newblock \emph{Theor. Comput. Sci.}, 357\penalty0 (1):\penalty0 186--201,
  2006.

\bibitem[Pfenning and Simmons(2009)]{DBLP:conf/lics/PfenningS09}
F.~Pfenning and R.~J. Simmons.
\newblock Substructural operational semantics as ordered logic programming.
\newblock In \emph{Proc. LICS '09}, pages 101--110. {IEEE} Computer Society,
  2009.

\bibitem[Post(1947)]{post47jsl}
E.~L. Post.
\newblock {Recursive unsolvability of a problem of Thue}.
\newblock \emph{J. Symb. Log.}, 12:\penalty0 1--11, 1947.

\bibitem[Savitch(1970)]{Savitch}
W.~J. Savitch.
\newblock {Relationships between nondeterministic and deterministic tape
  complexities}.
\newblock \emph{J. Comp. Syst. Sci.}, 4\penalty0 (2):\penalty0 177--192, 1970.

\bibitem[Schellinx(1991)]{Schellinx}
H.~Schellinx.
\newblock Some syntactical observations on linear logic.
\newblock \emph{J. Logic Computat.}, 1\penalty0 (4):\penalty0 537--559, 1991.

\bibitem[Thue(1914)]{Thue}
A.~Thue.
\newblock {Probleme {\"u}ber Ver{\"a}nderungen von Zeichenreihen nach gegebener
  Regeln}.
\newblock \emph{Kra. Vidensk. Selsk. Skrifter.}, 10, 1914.

\bibitem[Yetter(1990)]{Yetter}
D.~N. Yetter.
\newblock {Quantales and (noncommutative) linear logic}.
\newblock \emph{J. Symb. Logic}, 55\penalty0 (1):\penalty0 41--64, 1990.

\end{thebibliography}

\end{document}